\definecolor{marine}{RGB}{0,32,96}
\definecolor{bleudefrance}{rgb}{0.19, 0.55, 0.91}
\newcommand{\SU}{\operatorname{\mathsf{SU}}}  
\newcommand{\diam}{\operatorname{\mathsf{diam}}}  
\newcommand{\vol}{\operatorname{\mathsf{vol}}}  
\newcommand{\U}{\operatorname{\mathsf{U}}} 
\newcommand{\R}{\mathbb{R}}
\newcommand{\Diag}{\operatorname{\mathsf{Diag}}}  
\newcommand{\trace}{\operatorname{\mathsf{trace}}}
\newcommand{\Sp}{\operatorname{\mathsf{Sp}}}
\newcommand{\Spin}{\operatorname{\mathsf{Spin}}}
\newcommand{\C}{\mathbb{C}}
\newcommand{\Lie}{\mathsf{Lie}}   
\newcommand{\ad}{\operatorname{\mathsf{Ad}}}   
\numberwithin{equation}{subsection}
 \newtheoremstyle{mystyle}
 {}
 {}
 {\normalfont}
 {0pt}
 {\bfseries}
 {.}
 {4pt}
 {}
 \theoremstyle{mystyle}
 \newtheorem{definition}{Definition}[section]
 \newtheorem{notation}[definition]{Notation}
 \newtheorem{remark}[definition]{Remark}
 \newtheorem{example}[definition]{Example}
 \newtheorem{examples}[definition]{Examples}
\newtheoremstyle{mystyle2}
{6pt}
{6pt}
{\itshape}
{0pt}
{\bfseries}
{.}
{4pt}
{}
\theoremstyle{mystyle2}
\newtheorem{theorem}[definition]{Theorem}
\newtheorem{prop}[definition]{Proposition}
\newtheorem{lemma}[definition]{Lemma}
\newtheorem{corollary}[definition]{Corollary}
\newcommand{\purge}[1]{} 
\begin{document}

\title{Point vortex dynamics on K\"ahler twistor spaces}


\author{Sonja Hohloch}
\address{\newline \textbf{Sonja Hohloch} \newline
    Department of Mathematics \newline
    University of Antwerp \newline
    Middelheimlaan 1 \newline 
    2020 Antwerp, Belgium}
\curraddr{}
\email{sonja.hohloch@uantwerpen.be}
\thanks{}

\author{Guner Muarem}
\address{\newline \textbf{Guner Muarem} \newline
    Department of Mathematics \newline
    University of Antwerp \newline
    Middelheimlaan 1 \newline 
    2020 Antwerp, Belgium}
\curraddr{}
\email{guner.muarem@uantwerpen.be}
\thanks{}


\keywords{MSC 2020: 37J37, 37J39, 53D20, 70H06; 
Coadjoint orbits, Green's function, Momentum map, Point vortex dynamics, Flag manifold.}

\dedicatory{}
\maketitle
\begin{abstract}
In this paper, we provide tools to study the dynamics of point vortex dynamics on $\mathbb{CP}^n$ and the flag manifold $\mathbb{F}_{1,2}(\C^3)$. These are the only K\"ahler twistor spaces arising from 4-manifolds. We give an explicit expression for Green's function on $\mathbb{CP}^n$ {which enables us to} determine the Hamiltonian $H$ and the equations of motions for the point vortex problem on $\mathbb{CP}^n$. Moreover, we determine the momentum map $\mu:\mathbb{F}_{1,2}(\C^3)\to \mathfrak{su}^*(3)$ on the flag manifold.
\end{abstract}

\section{Introduction}

\subsection{Point vortex dynamics}
The problem of dynamics of interacting point vortices goes back to the work of Helmoltz \cite{helmholtz1867lxiii} in the 19th century and can be formulated intuitively in its simplest form as follows.
Consider $N$ points $z_1, \dots, z_N$ (which we shall refer to as `vortices') in the plane $\C\simeq \R^2$ with coordinates $z_k = x_k + iy_k$. Let $\Gamma_1, \dots, \Gamma_N \in \R^{\neq 0}$ be real, non-zero numbers simulating the `vortex strength' of each point. The equations determining this dynamical system are given by the $N$ differential equations
 \[ \dot{\overline{z}}_j=\frac{1}{2\pi i}\sum_{k=1}^N \frac{\Gamma_k}{z_j-z_k} \qquad \mbox{for } 1 \leq j {  \neq k} \leq N. \]  
The signs of the vortex strengths determine the sense of rotation of each vertex.
 This system is in fact an example of a Hamiltonian system.
 Endow $\C^N$ with the symplectic form 
$\Omega:= \sum_{k=1}^N \Gamma_k \tau_k^* \omega_{st}$ where $\omega_{st}$ is the standard symplectic form on $\C$ and $\tau_k: \C^N \to \C$ the projection on the $k$th component.
Denote by $r (z_j , z_k ) := |z_j - z_k |$ the Euclidean distance between points $z_j, z_k \in \C \simeq \R^2$ and set
$$ 
\Diag_N(\C):= \{(z_1, \dots, z_N) \in \C^N \mid z_j = z_k  \mbox{ for some } 1 \leq j, k \leq N \mbox{ with } j \neq k\}.
$$
Abbreviate $z:=(z_1, \dots, z_N)$ and consider the Hamiltonian function given by 
 \begin{align*}
 H: \C^N \setminus \Diag_N(\C) \to \R, \quad   H(z) : =-\frac{1}{4\pi}\sum_{k\neq j}\Gamma_j\Gamma_k\log(r(z_j,z_k)).
 \end{align*}
 Identifying $z_k = x_k + i y_k \simeq (x_k, y_k)$, the Hamiltonian equations are then given by
 \begin{align*}
 	\begin{cases}
 		\dot{x}_j=-\frac{1}{\Gamma_j}\partial_{y_j}H(x_1, \dots, x_N,y_1,\dots, y_N),
 	 \\	\dot{y}_j=\frac{1}{\Gamma_j}\partial_{x_j}H(x_1, \dots, x_N,y_1,\dots, y_N),
 	\end{cases}
 	\qquad \mbox{for } 1 \leq j \leq N.
 \end{align*}
Then one obtains the following three constants of motion (see for instance { Newton \cite{Newton2001}}) which reflect the invariance under translation and rotation of the system:
 \begin{align*}
 	p_x(z):=\sum_{j=1}^N\Gamma_j\Re(z_j), \qquad  p_y(z):=\sum_{j=1}^N\Gamma_j\Im(z_j) , \qquad m(z):=\frac{1}{2}\sum_{j=1}^N\Gamma_j|z_j|^2
 \end{align*}
where $\Re$ denotes the real part and $ \Im$ the imaginary part of a complex number. More generally, { Boatto \& Koiller \cite{Boatto2015}}
showed the following to be a fitting model for point vortex dynamics: Let $(M,\omega)$ be a symplectic manifold and consider the space $\mathcal{M}:= \Pi_{k=1}^N M \setminus \Diag_N(M)$ as phase space of $N$ moving vortices $z_1, \dots, z_N$ with vortex strengths $\Gamma_k\in\R^{\neq 0}$ for $1 \leq k \leq N$. 
Let $G$ be the fundamental solution (also called {\em Green's function}) of the Laplace-Beltrami operator and set 
\begin{align*}
R: M \to \R, \qquad R(z):=\lim_{s\to z}\left(G(s,z)-\frac{1}{2\pi}\log r(s,z) \right) 
\end{align*}
which is often referred to as {\em Robin function}.
The Hamiltonian of the system is then given by
\begin{align*}
	H:\mathcal{M} \to \R, \quad H(z_1, \dots, z_N ):=\sum_{1\leq j< k\leq N} \Gamma_i\Gamma_jG(z_j,z_k)+\sum_{k=1}^N\Gamma_k^2R_g(z_k).
\end{align*}
Green's function describes the interaction between pairs of distinct vortices and $R$ describes self-interactions of the vortices.  {Note that, on homogeneous manifolds, $R$ is  constant due to symmetry reasons and therefore often is neglected.}


The equations of the point vortex problem are exactly the Euler equation arising in the discretization of fluid equations in mathematical modeling problems, see Aref \cite{aref2007point}, and Angrand \cite{angrand1985numerical} for the corresponding numerics.

During the past years, quite some work has been done on generalizing this approach to other symplectic manifolds, for example:
\begin{enumerate}
	\item  
	On the $2$-spheres $\mathbb{S}^2$ with $\mathsf{SO}(3)$-invariant symplectic form and Hamiltonian vector field, see Crowdy \cite{crowdy2006point}, Laurent-Polz $\&$ Montaldi $\&$ Roberts \cite{laurent2011point},  Lim $\&$ Montaldi $\&$ Roberts \cite{lim2001relative}.
	\item  
	There has also been some research done on the cylinder concerning periodic motion, see Montaldi $\&$ Souliere $\&$ Tokieda \cite{montaldi2003vortex}, Dritschel $\&$ Boatto \cite{dritschel2015motion}.
	\item 
	Point vortices on the cylinder, see Montaldi \& Souliere \& Tokieda \cite{montaldi2003vortex}.
	\item 
	Point vortices on the hyperbolic plane, see Montaldi $\&$ Nava-Gaxiola \cite{montaldi2014point}.
	\item 	
	Point vortices on $\mathbb{CP}^2$ with underlying symmetry group $\mathsf{SU}(3)$, see Montaldi $\&$ Shaddad \cite{montaldi2018generalized, Montaldi2019}. 
\end{enumerate}
A natural question is if the examples from above can be generalised to higher dimension. This naturally yields larger and more complicated symmetry groups. For example, in the case of the 2-sphere, the symmetry group is $G=\mathsf{SO}(3)$. 
But since the spheres $\mathbb{S}^{2n}$ for $n>1$ do not admit a symplectic structure, generalizing straightforward to higher dimensions with $\mathsf{SO}(m)$-symmetry does not necessarily make sense. 
Nevertheless, since $\mathbb{CP}^1\cong\mathbb{S}^2$ on can intuitively think of $\mathbb{CP}^n$ as the `best symplectic {analogue}’ of the $(n + 1)$-sphere, albeit with underlying higher dimensional symmetry group $\mathsf{SU}(n)$.


\subsection{Relation to twistor spaces}

In the late seventies, 
Atiyah \cite{atiyah1978self} introduced the twistor theory for a 4-dimensional Riemannian manifold, relating it to 3-dimensional complex analysis. A few years later, in a paper by Hitchin \cite{hitchin1981kahlerian}, the question arose which complex manifold could be obtained by using Atiyah's twistor construction on compact $4$-manifolds. More precisely, the question was: which 4-manifolds have a twistor space which is K\"ahler? 

Surprisingly, there are not many, namely only the $4$-sphere $\mathbb{S}^4$ and the projective plane $\mathbb{CP}^2$ have K\"ahler twistor spaces. More specifically, the twistor space $\mathcal{T}(\mathbb{S}^4)$ is the complex projective space $\mathbb{CP}^3$ and $\mathcal{T}(\mathbb{CP}^2)$ is the 6-dimensional flag manifold (or Wallach space) $\mathbb{F}_{1,2}(\C^3) = \mathbb{W}^6=\mathsf{SU}(3)/\mathbb{T}^2$. 
\noindent{In this paper, we will be in particular interested in the spaces $\mathbb{CP}^n$ and the 6-dimensional flag manifold $\mathbb{F}_{1,2}(\C^3)$ in the context of point vortex dynamics.}  


\subsection{Main results}

One of the goals of this paper is to obtain an explicit expression for the Hamiltonian of the point vortex problem on certain coadjoint orbits in order to write down explicitly the equations of motion, look for conserved quantities, and analyse the underlying algebraic structure.

We are interested in symplectic manifolds with canonical $\mathsf{SU}(n)$-symmetry obtained by the coadjoint action of $\mathsf{SU(n)}$ on its dual Lie algebra $\mathfrak{su}(n)^*$. Specifically, we will focus on the case $n=3$. There exist exactly { three} coadjoint orbits, namely the six dimensional `generic' orbit
$$
\mathcal{O}^{\mathsf{SU}(3)}=\frac{\mathsf{SU}(3)}{\U(1)\times \U(1)}\cong \mathbb{F}_{1,2}(\C^3),
$$
the four dimension `degenerate' orbit (the meaning of this will become clear later on)
\[
\mathcal{O}_d^{\mathsf{SU}(3)}=\frac{\mathsf{SU}(3)}{\mathsf{SU}(2)\times \mathsf{SU}(1)}\cong \mathbb{CP}^2
\]
where the index $d$ refers to degenerate { and the trivial one.}

The first main question that we address in this paper arose from the following context: Montaldi $\&$ Shaddad \cite{montaldi2018generalized, Montaldi2019} studied the point vortex dynamics only on the degenerate orbit $\mathcal{O}_d^{\mathsf{SU}(3)}$ but not on the generic one. Thus a natural question is to investigate the point vortex problem on the generic orbit $\mathcal{O}^{\mathsf{SU}(3)}$.
Before we start with the associated momentum map we need some notation.
Let $B$ be the subgroup of upper triangular matrices of $\mathsf{SL}(3, \C)$. Then we may identify $\mathbb{F}_{1,2}(\C^3)\simeq \mathsf{SL}(3,\C)/ B$ (see Lemma \ref{lem:isoForFlag} for details) of which the elements are of the form
$$
\begin{pmatrix}
			1	&	0	&	0
		\\	z_1	&	1	&	0
		\\	z_2	&	z_3	&	1
		\end{pmatrix} 
		=:Z\in \mathsf{SL}(3,\C)/ B
$$
with $z_1, z_2, z_3 \in \C$.
Define the functions $K_1, K_2: \mathbb{F}_{1,2}(\C^3) \to \R$ by
$$
K_1 (Z): = 1 + |z_1|^2 + |z_2|^2  \quad \mbox{and} \quad K_2 (Z)  = 1 + |z_3|^2 + |z_1z_3-z_2|^2.
$$

\begin{theorem}
 \label{introTheoremA}
The momentum map of the left action of $\mathsf{SU}(3)$ on the generic coadjoint orbit $\mathcal{O}^{\mathsf{SU}(3)} = \mathbb{F}_{1,2}(\C^3)\simeq \mathsf{SL}(3,\C)/ B$ is explicitly given by
\begin{align*}
		\mu: \mathsf{SL}(3,\C)/ B \to \mathfrak{su}(3)^*,  \qquad
		\begin{pmatrix}
			1	&	0	&	0
		\\	z_1	&	1	&	0
		\\	z_2	&	z_3	&	1
		\end{pmatrix}  
		\mapsto (\mu_{ij})_{1 \leq i, j \leq 3}
	\end{align*}
	where $(\mu_{ij})_{1 \leq i, j \leq 3}$ is the traceless, anti-Hermitian matrix with entries
		\begin{align*}
	\mu_{11}&=\frac{1}{3} \left(\frac{x_3^2+y_3^2+2}{K_2}-\frac{x_2^2+y_2^2-1}{K_1}\right),
	\\
	\mu_{22}&=\frac{1}{3} \left(-\frac{2 x_2^2+2 y_2^2+1}{K_1}-\frac{x_3^2+y_3^2-1}{K_2}\right),
	\\ 
	\mu_{33}&=-(\mu_{11}+\mu_{22}),
	\\
	\mu_{12}&=\frac{\left(i y_1-x_1\right) \left(x_3-i y_3\right)-i y_2+x_2}{K_2}-\frac{x_1-i y_1}{K_1},
	\\
	\mu_{13}&=\frac{\left(i y_1-x_1\right) \left(x_3-i y_3\right)-i y_2+x_2}{K_2}-\frac{x_1-i y_1}{K_1},
	\\
	\mu_{23}&=\frac{i y_3+x_3}{K_2}-\frac{\left(x_1+i y_1\right) \left(x_2-i y_2\right)}{K_1}.
\end{align*}
The remaining entries are determined by the fact that the matrix is anti-Hermitian. 
\end{theorem}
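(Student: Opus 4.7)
The plan exploits the fundamental fact that $\mathbb{F}_{1,2}(\C^3) = \mathcal{O}^{\mathsf{SU}(3)}$ is itself a coadjoint orbit in $\mathfrak{su}(3)^*$, and that on such an orbit the momentum map of the coadjoint action is simply the inclusion $\mathcal{O}^{\mathsf{SU}(3)} \hookrightarrow \mathfrak{su}(3)^*$. So the whole task reduces to writing this inclusion explicitly in the Bruhat coordinates $(z_1, z_2, z_3)$.

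First I would pass from the representative $Z \in \mathsf{SL}(3,\C)/B$ to an $\mathsf{SU}(3)$-representative of the same flag via Gram--Schmidt (QR/Iwasawa decomposition) $Z = g_Z U$, with $U \in B$ upper triangular and of positive real diagonal. Two of the three columns of $g_Z$ come out in closed form without any Gram--Schmidt recursion: the first column is $v_1/\sqrt{K_1}$ with $v_1 = (1,z_1,z_2)^T$, and the third column lies along the Hermitian-orthogonal complement $V_2^\perp$ of the $2$-plane spanned by the first two columns of $Z$. That complement is spanned by $v_3 = (\bar z_1 \bar z_3 - \bar z_2,\, -\bar z_3,\, 1)^T$, obtained as the standard cross product of the Hermitian conjugates of the first two columns of $Z$, and has norm $\sqrt{K_2}$.

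The middle column $c_2$ I would not compute explicitly. Instead, fix a regular diagonal base point $\Lambda = \Diag(\lambda_1, \lambda_2, \lambda_3) \in \mathfrak{su}(3)^*$ for the orbit, so that the momentum map reads $\mu(Z) = g_Z \Lambda g_Z^{-1} = \sum_{i=1}^3 \lambda_i\, c_i c_i^\dagger$, and substitute the orthonormal-basis identity $c_2 c_2^\dagger = \Id - c_1 c_1^\dagger - c_3 c_3^\dagger$ to obtain the structural formula
$$\mu(Z) \;=\; \lambda_2\,\Id \;+\; (\lambda_1 - \lambda_2)\,\frac{v_1 v_1^\dagger}{K_1} \;+\; (\lambda_3 - \lambda_2)\,\frac{v_3 v_3^\dagger}{K_2}.$$
This makes $\mu$ manifestly traceless and anti-Hermitian (since it is conjugate to $\Lambda$), automatically produces the normalising denominators $K_1$ and $K_2$ that appear in every entry of the statement, and reduces the computation of the $\mu_{ij}$ to the direct evaluation of the two rank-one matrices $v_1 v_1^\dagger$ and $v_3 v_3^\dagger$.

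The rest of the proof is computational: evaluate $(v_1 v_1^\dagger)_{ij} = (v_1)_i\,\overline{(v_1)_j}$ and $(v_3 v_3^\dagger)_{ij} = (v_3)_i\,\overline{(v_3)_j}$ entry by entry, split each $z_k = x_k + i y_k$ into real and imaginary parts, and combine with the weights $(\lambda_i - \lambda_2)$. The three constants $\lambda_1, \lambda_2, \lambda_3$ are pinned down by evaluating a single diagonal entry at $Z = \Id$, where $K_1 = K_2 = 1$ and the two rank-one projectors degenerate to the elementary matrices $E_{11}$ and $E_{33}$; trace-freeness then forces $\mu_{33} = -(\mu_{11}+\mu_{22})$ and anti-Hermiticity determines the entries below the diagonal. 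The main technical obstacle will be the off-diagonal entries arising from $v_3 v_3^\dagger$: because $v_3$ contains the compound $z_1 z_3 - z_2$ together with its conjugate, these entries produce cubic and quartic cross terms in $z_j,\bar z_j$ that have to be reorganised into the compact form shown in the statement. A secondary subtlety is maintaining a uniform convention for the pairing $\mathfrak{su}(3) \times \mathfrak{su}(3)^* \to \R$ and for whether $\mu$ is displayed as Hermitian or anti-Hermitian, since a single global factor of $i$ decides this and must be tracked consistently throughout the calculation.
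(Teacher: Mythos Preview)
Your approach is correct in substance and genuinely different from the paper's. The paper never uses the fact that the momentum map on a coadjoint orbit (with the KKS form) is simply the inclusion $\mathcal{O}\hookrightarrow\mathfrak{su}(3)^*$. Instead it works entirely from the differential-geometric definition: it first computes the eight infinitesimal vector fields $X_{\lambda_k}$ on the big cell by exponentiating each Gell--Mann generator, letting it act on $Z$, and correcting back into $N^-$ by right multiplication with a suitable $b\in B$ (this is their Lemma preceding the theorem). It then writes out the Hermitian matrix $(h_{ij})$ of the K\"ahler form, contracts $\iota_{X_{\lambda_k}}\omega$, and integrates the resulting exact $1$-forms by hand, obtaining eight scalar equations in the unknown entries $\mu_{ij}$ which are then solved as a linear system. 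What your method buys is obvious: the Iwasawa/QR passage $Z=g_ZU$ together with the identity $c_2c_2^\dagger=\Id-c_1c_1^\dagger-c_3c_3^\dagger$ reduces everything to the two explicit rank-one projectors $v_1v_1^\dagger/K_1$ and $v_3v_3^\dagger/K_2$, explaining at once why every entry has $K_1$ or $K_2$ as denominator and replacing eight integrations by a single matrix evaluation. What the paper's route buys is self-containment: it starts from the explicit K\"ahler potential $\log(K_1K_2)$ and derives $\mu$ without invoking the identification of $\omega^{(W^6)^{\C}}$ with a specific KKS form $\omega^{KKS}_\Lambda$. In your argument that identification is load-bearing, and your device for fixing $\Lambda$ (``evaluate a diagonal entry at $Z=\Id$'') is slightly circular since it reads $\Lambda$ off the statement you are trying to prove; the clean fix is to read the weights $(\lambda_1-\lambda_2,\lambda_2-\lambda_3)$ directly from the coefficients in the K\"ahler potential $\log K_1+\log K_2$ (this is exactly Picken's correspondence, which the paper cites but does not reprove).
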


Theorem \ref{introTheoremA} is restated as Theorem \ref{momentmap} and proven in Section \ref{sectionMomentumMap}.
We do not yet have an explicit formula for the Hamiltonian $H$ on $ \mathbb{F}_{1,2}(\C^3)$ since this requires an explicit expression of Green's function on $ \mathbb{F}_{1,2}(\C^3)$ which turned out to be more involved than expected and will be treated in a future work.

The second main question that we solved in this paper was motivated by the works of Montaldi $\&$ Shaddad \cite{montaldi2018generalized, Montaldi2019} about the dynamics of the generalised vortex problem on $\mathbb{CP}^2$ on which they worked without an explicit expression for the Hamiltonian. In fact, it is possible to compute the Hamiltonian of the point vortex problem for general $\mathbb{CP}^n$ explicitly. The answer involves first computing Green's function on $\mathbb{CP}^n$ (stated and proven as Theorem \ref{greenFunctionCPn} in Section \ref{sectionGreenAndHam}):

\begin{theorem}
 \label{introTheoremB}
	Consider $\mathbb{CP}^n$ with the Fubini-Study metric. Then Green's function is given by $G:\left(\mathbb{CP}^n\times \mathbb{CP}^n \right)\setminus \Diag_2( \mathbb{CP}^n) \to \mathbb{R}$ with
	\begin{align*}
	G(\xi, \eta)= 
	-\frac{1}{2n\cdot \vol(\mathbb{CP}^n)}\left(\log(\sin(r(\xi, \eta)))-\sum _{j=1}^{n-1} \frac{1}{2j \sin^{2j}(r(\xi, \eta))}\right)
	\end{align*}
	where 
	$r(\xi, \eta) = \arccos\sqrt{\frac{\langle \xi, \eta \rangle_H \langle \eta , \xi \rangle_H}{\langle \xi, \xi \rangle_H \langle \eta, \eta \rangle_H}}$ is the geodesic distance between two point in $\mathbb{CP}^n$ and $\langle \cdot,\cdot \rangle_H$ is the Hermitian inner product.
\end{theorem}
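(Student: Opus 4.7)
The plan is to exploit the two-point homogeneity of $\mathbb{CP}^n$ (with the Fubini-Study metric) under the action of $\mathsf{SU}(n+1)$: because the metric is $\mathsf{SU}(n+1)$-invariant and the action is transitive on pairs of equidistant points, Green's function must depend only on the geodesic distance $r=r(\xi,\eta)$. Writing $G(\xi,\eta)=f(r)$ reduces the distributional equation for $G$ to a second-order ODE in the single variable $r$.

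The first concrete step is to write down the radial Laplacian. The Jacobi-field decomposition along a Fubini-Study geodesic exhibits one ``complex'' transverse direction of sectional curvature $4$ and $2n-2$ ``real'' transverse directions of sectional curvature $1$, producing the geodesic-polar volume density $w(r):=\sin^{2n-1}(r)\cos(r)$, so that for a radial function $\Delta f=\tfrac{1}{w}(wf')'$. Away from the singularity the equation becomes $(wf')'=w/\vol(\mathbb{CP}^n)$. Integrating once and matching the delta-source at $r=0$ via $\lim_{r\to 0}w(r)f'(r)=-1/\omega_{2n-1}$, with $\omega_{2n-1}=2\pi^n/(n-1)!$ the surface area of the unit $(2n-1)$-sphere, followed by the arithmetic identity
$$\omega_{2n-1}=2n\cdot\vol(\mathbb{CP}^n),$$
which follows from $\vol(\mathbb{CP}^n)=\pi^n/n!$, yields
$$
f'(r)=\frac{1}{2n\cdot\vol(\mathbb{CP}^n)}\left[\tan r-\frac{1}{\sin^{2n-1}(r)\cos(r)}\right].
$$
This same identity simultaneously forces $wf'$ to vanish at $r=\pi/2$, ensuring regularity at the cut locus $\mathbb{CP}^{n-1}\subset\mathbb{CP}^n$.

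It remains to integrate. The $\tan r$ piece contributes $-\log\cos r$. For the second term, the substitution $u=\sin r$ combined with the telescoping identity
$$
\frac{1}{u^{2k-1}(1-u^2)}=\frac{1}{u^{2k-1}}+\frac{1}{u^{2k-3}(1-u^2)}
$$
reduces the integral to $\int du/[u(1-u^2)]$ plus a sum of powers, yielding $\log\sin r-\log\cos r-\sum_{j=1}^{n-1}\frac{1}{2j\sin^{2j}r}$. The two $\log\cos r$ contributions cancel and, after absorbing the additive constant in $G$, one recovers exactly the formula in the statement. The expression $r(\xi,\eta)=\arccos\sqrt{\langle\xi,\eta\rangle_H\langle\eta,\xi\rangle_H/(\langle\xi,\xi\rangle_H\langle\eta,\eta\rangle_H)}$ for the Fubini-Study distance is the standard Hermitian formula.

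The main obstacle is bookkeeping rather than a conceptual difficulty: keeping track of sign conventions for the Laplacian and for the distributional source, correctly computing the residue in the delta-matching at $r=0$, and carrying out the partial-fraction reduction cleanly. The cancellation of the $\log\cos r$ terms, which is precisely what makes $G$ smooth across the cut locus, is the key algebraic confirmation that everything has been done consistently.
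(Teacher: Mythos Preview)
Your argument is correct and follows essentially the same strategy as the paper: reduce Green's function to a radial function via two-point homogeneity, obtain the first-order ODE for $f'(r)$, and integrate. Your expression $f'(r)=\tfrac{1}{2n\,\vol(\mathbb{CP}^n)}\bigl[\tan r-\tfrac{1}{\sin^{2n-1}r\cos r}\bigr]$ is algebraically identical to the paper's $\varphi'(r)=-\tfrac{1}{2n\,\vol(\mathbb{CP}^n)}\cdot\tfrac{1-\sin^{2n}r}{\sin^{2n-1}r\cos r}$.

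The only notable differences are in presentation. The paper obtains the ODE by quoting a general result of Beltr\'an--Corral--Criado del Rey for locally harmonic Blaschke manifolds, whereas you derive it directly from the geodesic-polar volume density together with the delta-matching $\lim_{r\to 0}w(r)f'(r)=-1/\omega_{2n-1}$ and the identity $\omega_{2n-1}=2n\,\vol(\mathbb{CP}^n)$; your route is more self-contained. For the antiderivative, the paper proves a separate lemma via the binomial expansion of $(1+\cot^2 r)^{n-1}$, while your telescoping identity $\tfrac{1}{u^{2k-1}(1-u^2)}=\tfrac{1}{u^{2k-1}}+\tfrac{1}{u^{2k-3}(1-u^2)}$ after the substitution $u=\sin r$ is cleaner and makes the cancellation of the $\log\cos r$ terms transparent. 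Both paths land on the same formula.
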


The following theorem is stated and proven as Theorem \ref{fullhamiltonian_cpn} in Section \ref{sectionGreenAndHam}.

\begin{theorem}
 \label{introTheoremC}
The Hamiltonian for the $N$ point vortex dynamics on the projective space $\mathbb{CP}^n$ is explicitly given by
\begin{align*}
& H: \ \bigl(\mathbb{CP}^n \bigr)^N \setminus \Diag_N(\mathbb{CP}^n) \ \to\ \R, \\
& H (\zeta) = -\frac{1}{2(n-1)!\pi^n}\sum_{\alpha < \beta}^N \Gamma_{\alpha}\Gamma_{\beta}\left(\log(\sin(r(\zeta_{\alpha},\zeta_{\beta})))-\sum _{j=1}^{n-1} \frac{1}{2j \sin^{2j}(r(\zeta_{\alpha},\zeta_{\beta}))}\right)
\end{align*}
where $\zeta = (\zeta_1, \dots, \zeta_N)$ and $r(\zeta_{\alpha},\zeta_{\beta})$ is the geodesic distance on $\mathbb{CP}^n$ between the two points given by 
\begin{align*}
r(\zeta_{\alpha},\zeta_{\beta}) = \arccos\sqrt{\frac{\langle \zeta_{\alpha},\zeta_{\beta}\rangle_H \langle \zeta_{\beta},\zeta_{\alpha}\rangle_H}{\langle \zeta_{\alpha},\zeta_{\alpha}\rangle_H \langle \zeta_{\beta},\zeta_{\beta}\rangle_H}},
\end{align*}
where $\langle \cdot,\cdot \rangle_H$ is the Hermitian inner product.
\end{theorem}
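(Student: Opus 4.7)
The plan is simply to combine the general point vortex Hamiltonian recalled from Lin's work in Section~1.1 with the explicit Green's function provided by Theorem~\ref{introTheoremB}. So the heavy lifting is done in the earlier results, and Theorem~\ref{introTheoremC} amounts to a clean substitution.

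First I would recall the general template
\begin{equation*}
H(\zeta_1,\ldots,\zeta_N) \;=\; \sum_{1\le \alpha<\beta\le N} \Gamma_\alpha\Gamma_\beta\, G(\zeta_\alpha,\zeta_\beta) \;+\; \sum_{k=1}^{N}\Gamma_k^2\, R(\zeta_k),
\end{equation*}
for $N$ vortices on a Riemannian/K\"ahler manifold with Green's function $G$ and Robin function $R$. Next I would argue that the self-interaction term drops out. Since $\mathsf{SU}(n{+}1)$ acts transitively on $\mathbb{CP}^n$ by isometries of the Fubini--Study metric, the Riemannian geometry (and in particular $G(z,\cdot)$ and its renormalized diagonal) is equivariant, so the Robin function $R:\mathbb{CP}^n\to\R$ must be constant. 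Thus $\sum_{k}\Gamma_k^2 R(\zeta_k)$ is independent of $\zeta$ and can be absorbed into a redefinition of $H$ by an additive constant, which does not affect the Hamiltonian vector field.

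Then I would substitute the expression for $G$ from Theorem~\ref{introTheoremB}:
\begin{equation*}
G(\zeta_\alpha,\zeta_\beta) \;=\; -\frac{1}{2n\cdot\vol(\mathbb{CP}^n)}\left(\log(\sin(r(\zeta_\alpha,\zeta_\beta))) \;-\; \sum_{j=1}^{n-1}\frac{1}{2j\sin^{2j}(r(\zeta_\alpha,\zeta_\beta))}\right),
\end{equation*}
with $r(\zeta_\alpha,\zeta_\beta)$ the Fubini--Study geodesic distance, whose closed form in homogeneous coordinates is precisely the $\arccos$ expression appearing in the statement (this identity is a standard computation using $\mathsf{SU}(n{+}1)$-invariance and reducing to the complex line through the two points). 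Pulling the common prefactor out of the double sum yields the claimed formula provided one uses the Fubini--Study volume $\vol(\mathbb{CP}^n)=\pi^n/n!$, whereupon $\frac{1}{2n\cdot\vol(\mathbb{CP}^n)} = \frac{(n-1)!}{2\pi^n}$, which is the leading constant in the Hamiltonian after absorbing the combinatorial factor into the index set $\alpha<\beta$.

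The only genuine step of substance is the vanishing (constancy) of the Robin function on a homogeneous K\"ahler manifold; the remainder is bookkeeping: matching the volume normalization and rewriting the geodesic distance in terms of the Hermitian inner product. No real obstacle is expected once Theorem~\ref{introTheoremB} is in hand.
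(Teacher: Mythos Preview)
Your proposal is correct and follows the paper's approach exactly: the paper's proof merely cites the general vortex Hamiltonian \eqref{ham} and substitutes the Green's function from Theorem~\ref{greenFunctionCPn}, while you additionally spell out why the Robin term is constant by homogeneity (a point the paper relegates to a remark in the introduction about homogeneous manifolds). One minor correction: your prefactor $\tfrac{(n-1)!}{2\pi^n}$ is indeed the outcome of the substitution and there is no further ``combinatorial factor'' to absorb, since both formulas already sum over $\alpha<\beta$; the constant $\tfrac{1}{2(n-1)!\pi^n}$ in the stated theorem thus appears to be a misprint (the two expressions agree only for $n\le 2$).
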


\noindent As already explained above, we do not yet have an explicit formula for the Hamiltonian $H$ on $ \mathbb{F}_{1,2}(\C^3)$ since we do not yet have an explicit expression of Green's function on $ \mathbb{F}_{1,2}(\C^3)$. The hope (see Section \ref{hamFlag}) is to make use of the fibration 
$$
\mathbb{S}^2\longrightarrow  \mathbb{F}_{1,2}(\C^3) \longrightarrow \C\mathbb{P}^2
$$
and to obtain Green's function on $\mathbb{F}_{1,2}(\C^3)$ from Green's functions on $\mathbb{S}^2$ and $\C\mathbb{P}^2$ and thus obtain the Hamiltonian on $\mathbb{F}_{1,2}(\C^3)$. This is planned to be carried out in a future work.


\subsection{Organisation of the paper} We give a quick overview of this paper:
\begin{itemize}
	\item {In Section \ref{sectionPreliminaries}, we recall necessary notions and results from Lie algebras, representation theory and differential geometry.}
	\item {In Section \ref{sectionCoadInfo}, we consider geometric and algebraic features and properties of the two coadjoint orbits $\mathcal{O}^{\SU(3)}_d \simeq \mathbb{CP}^2$ and $\mathcal{O}^{\SU(3)} \simeq \mathbb{F}_{1, 2}(\C^3)$.} 
	\item {In Section 4, we analyse the momentum map for various situations.}   
	\item {In Section \ref{sectionGreenAndHam}, we study Green's function in various settings and compute an explicit formula for it on $\mathbb{CP}^n$. After that, we compute the associated Hamiltonian function.}
\end{itemize}


\section{Preliminaries}
\label{sectionPreliminaries}

{ Throughout the whole paper, Lie groups are assumed to be compact unless stated otherwise.}
\subsection{Notions and conventions from group actions and Lie theory}
 Let $G$ be a compact
Lie group with Lie algebra $\Lie(G)=\mathfrak{g}$ and dual algebra $\mathfrak{g}^*$. 
The (left) \textit{action} of a Lie group $G$ on a manifold $M$ is denoted by the map $\Phi:G\times M\to M$ which satisfies for all $x\in M$
\begin{enumerate}
	\item $\Phi(e,x)=x$,
	\item $\Phi(g,\Phi(h,x))=\Phi(gh,x)$ for all $g,h\in G$.
\end{enumerate}
We usually write the action briefly as $g\cdot x$ or simply $gx$. 
Recall that Lie group actions are smooth.
Moreover, the \textit{isotropy subgroup} or \textit{stabilizer} of a point $m$ is given by the closed subgroup $G_m:=\{g\in G\mid gm=m\}$. 
The orbit under $G$ of a point $m \in M$ is given by
$$
\mathcal{O}_m:= \{ gm \in M \mid g \in G\}  \subseteq M.
$$ 
Lying in the same orbit gives rise to an equivalence relation on the manifold $M$ via $x\sim y\Leftrightarrow gx=y$ for $x,y\in M$ and $g\in G$. 
 The space consisting of all these equivalence classes is called the {\em orbit space} and denoted by $M/G$.
Now consider the action of a Lie group $G$ on itself by conjugation 
$$c_g:G\to G, \quad  h\mapsto ghg^{-1}.$$ 
Identifying the Lie algebra $\mathfrak{g}$ with the tangent space $T_eG$ at the neutral element $e \in G$ and differentiating $c_g$ in $e$, we obtain for all $g \in G$ 
\[ \ad_g:T_eG \simeq \mathfrak{g} \ \to \ T_eG \simeq \mathfrak{g}, \quad {\ad_gX=gXg^{-1}} \]
with adjoint representation
$$
\ad :G\times \mathfrak{g} \to \mathfrak{g} , \quad (g, X)\mapsto \ad_g(X).
$$
As dual notation, we have the \textit{coadjoint {representation}} 
$$\textsf{Ad}^*:G\times \mathfrak{g}^*\to \mathfrak{g}^*, \quad (g,\alpha)\mapsto \mathsf{Ad}^*_{g^{-1}}\alpha=g \alpha g^{-1}.$$
For every $\mu\in\mathfrak{g}^*$, the set 
\[ \mathcal{O}_{\mu} : =\{\ad_g^*\mu\mid \text{for all } g\in G\} \subseteq  \mathfrak{g}^* \]
is the \textit{coadjoint orbit} of $G$ through $\mu$. 

Note that, in general, the adjoint and coadjoint representations (and thus the resulting orbits) are \textit{not} isomorphic, see for instance counterexamples given by \textit{groups of Euclidean type} (cf.\ Arathoon $\&$ Montaldi \cite{arathoon2018hermitian}).

In this paper, we are working with Lie algebras that consist of matrices. Here the dual pairing between $\mathfrak{g}$ and $\mathfrak{g}^*$ is given by the so-called Killing form $\kappa ( \cdot, \cdot) $ which is a multiple of the trace of the product of the two matrices. For example, for $\mathfrak{su}(n)$, the Killing form is given by 
\begin{equation}
\label{ex:explicitTrace}
\kappa_{\mathfrak{su}(n)}(X,Y)=\trace(XY) \quad \mbox{for all } X,Y\in\mathfrak{su}(n).
\end{equation}
We denote the pairing between $\mathfrak{g}$ and $\mathfrak{g}^*$ by
$$
\langle\cdot,\cdot\rangle: \mathfrak{g}\to \mathfrak{g}^*, \quad X\mapsto \kappa_X \mbox{ where } \kappa_X(Y):=\kappa(X,Y) \mbox{ for all } X, Y \in \mathfrak{g}.
$$


\subsection{Exponential of a matrix} 
In the context of Lie groups and Lie algebras, the exponential map is defined as 
\begin{align*}
\exp:\mathfrak{g}\to G,\quad X\mapsto \gamma(1)
\end{align*}
where $\gamma:\R\to G$ is the unique one-parameter subgroup of $G$ for which the tangent vector at the identity is $X$. In the case of a matrix Lie group, the exponential map is given by
\begin{align*}
	\exp(A) :=\sum_{k=0}^{\infty}\frac{1}{k!}A^k \quad \mbox {for all } (n\times n)\mbox{-matrices } A,
\end{align*}
briefly called the {\em exponential} of the matrix $A$.
If $A=\Diag(a_{11},\dots,a_{nn})$ is a diagonal matrix we obtain $A^k=\operatorname{Diag}(a^k_{11},\dots,a^k_{nn})$ and therefore
\begin{align*}
	\exp(A) & = \sum_{k=0}^{\infty}\frac{1}{k!}{\Diag}(a^k_{11},\dots,a^k_{nn})
	= \Diag\left( \sum_{k=0}^{\infty}\frac{1}{k!}a_{11}^k,\dots,  \sum_{k=0}^{\infty}\frac{1}{k!}a_{nn}^k\right) \\
	& =\Diag\left(e^{a_{11}},\dots,e^{a_{nn}}\right).
\end{align*}
If $A$ is diagonalisable with $A=PDP^{-1}$, where $P$ is the matrix of eigenvectors and $D$ is the diagonal matrix with the eigenvalues on the diagonal, then
\begin{align*}
	A^k= (PDP^{-1})^k=PDP^{-1} \cdots PDP^{-1}=PD^kP^{-1}
\end{align*}
and therefore
\begin{align}\label{exponential}
	\exp(A)=P\exp(D)P^{-1}.
\end{align}


\subsection{Symplectic manifolds, Hamiltonian dynamics, and momentum maps}
A symplectic manifold $(M, \omega)$ is a smooth manifold $M$ equipped with a {\em symplectic form} $\omega$ which is a closed non-degenerate differential $2$-form, i.e.\ $d\omega=0$ and whenever $\omega_x(u,v)=0$ for all $u\in T_xM$ then $v=0$. This implies in particular that symplectic manifolds are always even dimensional.

Given a symplectic manifold $(M, \omega)$, the map 
$$
TM \to T^*M, \quad X \to \iota_X\omega \quad \mbox{where} \quad  \iota_X\omega(Y):= \omega(X, Y) \mbox{ for all } Y \in TM
$$
is an isomorphism, often referred to as {\em contraction} of $\omega$ by a vector field.
Symplectic manifolds are the natural geometric background for Hamiltonian dynamics:
Given a smooth function $H:M\to \R$, its {\em Hamiltonian vector field} (also called {\em symplectic gradient}) is defined via $\iota_{X_{H}}\omega=dH$. In this situation, $H$ is often referred to as {\em Hamiltonian function}.
%
%
%
Let $G$ be a Lie group $G$ with Lie algebra $\Lie(G)=\mathfrak{g}$ and assume that the action $G\times M\to M$ is by symplectomorphisms, i.e, for all $g \in G$, the map $M \to M$, $x \mapsto g.x$ is a symplectomorphism. Denote by $\langle\cdot,\cdot\rangle$ the dual pairing $\langle\cdot,\cdot\rangle:\mathfrak{g}^*\times \mathfrak{g}\to \R$.  
Every $\xi\in\mathfrak{g}$ gives rise to a vector field $X_{\xi}$ via
\[ X_{\xi}(x)=\frac{d}{dt} {\bigg|_{t=0}} \exp(t \xi)\cdot x\] 
for all $x\in M$.
The {\em momentum map} for this $G$-action on $(M,\omega)$ is a map $\mu : M \to \mathfrak{g}^* $
such that 
\[ d(\langle \mu,\xi\rangle) = \iota_{X_\xi}  \omega \]
for all $\xi\in\mathfrak{g}$ where 
\[ \langle \mu,\xi\rangle : M\to \R , \quad x\mapsto \langle \mu(x),\xi\rangle. \]





\subsection{Weyl group and coadjoint orbits of $\mathsf{SU}(n)$} 
In this paper, we are in particular interested in the coadjoint orbits of the Lie group $\mathsf{SU}(3)$ since they are can be chosen to be the setting for vortex dynamics on the projective plane $\mathbb{CP}^2$ and the flag manifold $\mathbb{F}_{1,2}(\mathbb{C}^3)$. 

Let us start with fixing notation and recalling some properties of this Lie group and its Lie algebra. 
The general linear group is defined as \begin{align*}
\mathsf{GL}(n,\mathbb{C}) = \{A\in\operatorname{Mat}_n(\mathbb{C})\mid \det A\neq 0\}.
\end{align*}
The maximal compact simply connected Lie subgroup of $\mathsf{GL}(n,\mathbb{C})$ is given by \[ \mathsf{SU}(n) =\{U\in \operatorname{Mat}_n(\mathbb{C}) \mid U \overline{U}^T=1, \ \det U =1\} .\]
We have $\dim_\R \mathsf{SU}(n)= n^2-1$.
The Lie algebra $\mathfrak{su}(n)$ of $\mathsf{SU}(n)$ can be identified with 
$$
\{ 
U \in \operatorname{Mat}_n(\mathbb{C}) \mid \overline{U}^T= -U, \ \trace{U}=0
\} ,
$$
meaning all $(n\times n)$-matrices which are skew-Hermitian matrices with trace zero.  Our convention for the Lie bracket is $[A,B]=AB-BA$ for all $A, B \in \mathfrak{su}(n)$. We say that $A, B \in \mathfrak{su}(n)$ {\em commute} if $[A, B]=0$.

In what follows, we will often work with the following basis of $\mathfrak{su}(3)$:

\begin{notation}
\label{rescaledGel} 
  	The following eight traceless traceless $(3\times 3)$-matrices are known as the Gell-Mann matrices. 
\begin{align*}
\tilde{\lambda}_1 &= {\begin{pmatrix}0&1&0\\1&0&0\\0&0&0\end{pmatrix}}          &  \tilde{\lambda}_2 &={\begin{pmatrix}0&-i&0\\i&0&0\\0&0&0\end{pmatrix}}              &  \tilde{\lambda}_3&={\begin{pmatrix}1&0&0\\0&-1&0\\0&0&0\end{pmatrix}}		&	\tilde{\lambda}_4&={\begin{pmatrix}0&0&1\\0&0&0\\1&0&0\end{pmatrix}}\\
\tilde{\lambda}_5&={\begin{pmatrix}0&0&-i\\0&0&0\\i&0&0\end{pmatrix}}         &  \tilde{\lambda}_6&={\begin{pmatrix}0&0&0\\0&0&1\\0&1&0\end{pmatrix}}   &  \tilde{\lambda}_7&={\begin{pmatrix}0&0&0\\0&0&-i\\0&i&0\end{pmatrix}}		&	\tilde{\lambda}_8&={\frac{1}{\sqrt{3}}}{\begin{pmatrix}1&0&0\\0&1&0\\0&0&-2\end{pmatrix}}.
\end{align*}
We have $[\tilde{\lambda}_3,\tilde{\lambda}_8]=0$ and no other of these matrices commute with both $\tilde{\lambda}_3$ and $\tilde{\lambda}_8$.
The set 
$$\left\{ \left. \lambda_k:= \frac{i}{2}\tilde{\lambda}_k \  \right| \ k=1,\dots,8\right\}$$ 
forms a (rescaled) basis for the Lie algebra $\mathfrak{su}(3)$, often called {\em Gell-Mann basis}.
\end{notation}

\begin{remark}
\label{rem:casimir}
{The Lie algebra $\mathfrak{su}(3)$ has rank two. We denote the corresponding Casimirs living in the universal enveloping algebra by $C_1, C_2 \in \mathcal{U}(\mathfrak{su}(3))$, i.e.\ we have
	$[C_1,\lambda_k]=[C_2,\lambda_k]=0$ for all $k=1,\dots,8$. They are explicitly given by $C_1 = \sum_{k=1}^8 \lambda_k \lambda_k $ and $C_2 = 8 \sum_{j, k, \ell=1}^8 d_{j k \ell} \lambda_j \lambda_k \lambda_\ell$ where $d_{j k \ell} $ are the so-called structure constants of $\mathfrak{su}(3)$. }
\end{remark}

We will now recall the so-called {\em Weyl group}.  
Let $E$ be a finite dimensional vector space over $\mathbb{R}$ and let $\langle \cdot,\cdot\rangle_E$ be an inner product on $E$.
A {\em roots system} $\Phi\subset E$ is a finite set of non-zero vectors, called \textit{roots} such that:
\begin{enumerate}
	\item 
	The set of roots $\Phi$ spans the space $E$.
	\item 
	If $\alpha\in\Phi$ and $c\in\R$, then $c\alpha\in\Phi$ if and only if $c=\pm 1$.
	\item  $\sigma_{\alpha}(\beta):=\beta-2\frac{\langle \alpha,\beta\rangle_E}{\langle \alpha,\alpha \rangle_E}\alpha\in\Phi$ for all $\alpha, \beta \in \Phi$, 	
	i.e. $\Phi$ is invariant under $\sigma_\alpha$ for all $\alpha \in \Phi$ which is the reflection about the hyperplane orthogonal to $\alpha$. 
	\item
	$( \alpha,\beta) :=2\frac{\langle \alpha,\beta\rangle_E}{\langle \alpha,\alpha\rangle_E} \in \mathbb{Z}$ for all $\alpha, \beta \in \Phi$, i.e.  the projection of $\beta$ onto the line through $\alpha$ is an integer or half-integer multiple of $\alpha$.
\end{enumerate}
A subset $\Phi^+ \subset \Phi$ is called a {\em positive root system} if
\begin{enumerate}
	\item 
	for all $\alpha \in \Phi$, either $\alpha \in \Phi$ or $- \alpha \in \Phi$,
	\item
	for all $\alpha, \beta \in \Phi^+$, we have $\alpha + \beta \in \Phi^+$.
\end{enumerate}
Denote by 
$\mathsf{O}(E):=\{A\in \mathsf{GL}(E)\mid \langle Av,Aw\rangle_E =\langle v,w\rangle_E, \text{ for all } v,w\in E\}$ 
the orthogonal group consisting of all elements in $E$ preserving the inner product. The (finite) subgroup $\mathcal{W} \leq \mathsf{O}(E)$ generated by all reflections $\sigma_{\alpha}$ with $\alpha \in \Phi$ is called the {\em Weyl group} associated to $\Phi$.
Denote the hyperplane perpendicular to $\alpha\in\Phi$ by $\Pi_\alpha$. 
The closure of a  connected component of $E \setminus \{ \Pi_\alpha \mid \alpha  \in \Phi\}$ is called a {\em Weyl chamber}. 
We define the {\em positive Weyl chamber} (with respect to a fixed choice of $\Phi^+$) as the closed set \[ \mathcal{C} = \{x\in E\mid \langle x,\alpha\rangle_E\geq 0 \text{ for all $\alpha\in\Phi^+$}\}. \]
%
Given a positive root system, there is only one positive Weyl chamber.
Let $G$ be a {compact} Lie group and consider a maximal torus $H\subset G$ (i.e.\ a compact, connected, abelian Lie subgroup of $G$ which is maximal with respect to these properties) and the Cartan algebra $Lie(H):=\mathfrak{h}\subset \mathfrak{g}$ with dual $\mathfrak{h}^*$. Note that a maximal torus is unique up to conjugation. In the case of $\mathsf{SU}(n)$ the situation is as follows:

\begin{example}
The maximal torus $T$ 
of $\SU(n)$ is given by the diagonal matrices $\Diag(e^{i\theta_1},\dots,e^{i\theta_n})$ such that $\prod_{j=1}^n e^{i\theta_j}=1$. The Lie algebra $\Lie(T)=:\mathfrak{t}$ (i.e.\ the Cartan algebra)
is then given by the space of traceless diagonal matrices 
$\mathfrak{t}=\left\{\Diag(\theta_1,\dots,\theta_n) \left|\ \sum_{j=1}^n \theta_j=0\right.\right\}$.
Thus, the {interior $\mathfrak{t}_+^0$ of the} positive Weyl chamber $\mathfrak{t}_+$ is given by \[ \mathfrak{t}_+=\left\{(x_1,\dots,x_n)\in \R^n\left |\  x_1> x_2> \cdots > x_n\text{ and } \sum x_i=0\right.\right\} \]
and the {closure $\overline{\mathfrak{t}}_+ = \mathfrak{t}_+$} is given by replacing $>$ by $\geq$ in the above set. { In order to make a visualisation, we will now focus on the case $n=3$. Then} the Weyl group is the symmetric group $\mathsf{Sym}(3)$ generated by the positive roots $\alpha$, $\beta$, $\alpha + \beta$ sketched in Figure \ref{fig:root}. It permutes in fact all roots.
\end{example}

\begin{figure}[h!]
	\centering
	\includegraphics[width=0.45\linewidth]{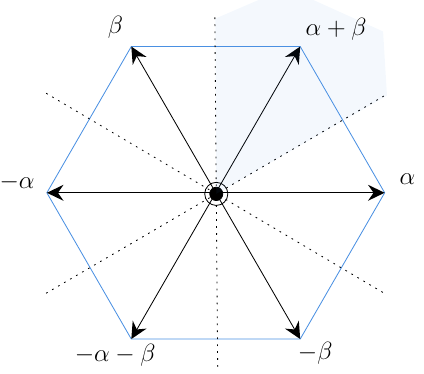}
	\caption{Root diagram of $\mathfrak{su}(3)$ where the positive roots are given by $\alpha = (1,0)$, $\beta=\left(	-\frac{1}{2}, \frac{\sqrt{3}}{2}	\right)$ and $\alpha+\beta=\left(	\frac{1}{2}, \frac{\sqrt{3}}{2}	\right)$. The simple roots are given by $\alpha$ and $\beta$. The blue shaded area is the positive Weyl chamber.} 
	\label{fig:root}
\end{figure}

The orbits of the Weyl group are described as follows (see e.g.\ Kirillov \cite{Kirillov2004} for a proof).

\begin{theorem}
Let $G$ be a Lie group with Lie algebra $\mathfrak{g}$ and Cartan subalgebra $\mathfrak{h}$ and dual $\mathfrak{h}^*$. Let $\mathcal{O}_{\mu}$ be a coadjoint orbit of $G$. Then $\mathcal{O}_{\mu}\cap \mathfrak{h}^*$ is an orbit of the Weyl group.
\end{theorem}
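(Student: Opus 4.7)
The plan is to prove Bott's theorem in two logically separate steps: first, verify that every coadjoint orbit meets $\mathfrak{h}^*$, and second, show that any two points of $\mathfrak{h}^*$ lying in the same coadjoint orbit are related by an element of the Weyl group. The reverse inclusion, that every Weyl translate of $\mu\in\mathfrak{h}^*$ stays inside $\mathcal{O}_\mu$, is automatic once $\mathcal{W}$ is identified with $N_G(H)/H$, so all the content is in the two claims above. Throughout I will assume $G$ is compact, which is the setting relevant to the paper.

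For nonemptiness of $\mathcal{O}_\mu\cap\mathfrak{h}^*$, I would exploit compactness of $G$ to equip $\mathfrak{g}$ with an $\ad$-invariant inner product (obtained by averaging any inner product over $G$, or by using $-\kappa$ in the semisimple case). This yields a $G$-equivariant isomorphism $\mathfrak{g}\cong\mathfrak{g}^*$ that intertwines the adjoint and coadjoint representations and sends $\mathfrak{h}$ to $\mathfrak{h}^*$. Under this identification the claim reduces to the statement that every adjoint orbit in $\mathfrak{g}$ meets the Cartan subalgebra $\mathfrak{h}$, which in turn follows because any $X\in\mathfrak{g}$ lies in some maximal abelian subalgebra and all such subalgebras are $G$-conjugate by the conjugacy of maximal tori in a compact Lie group.

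For the second step, fix $\mu_1,\mu_2\in\mathcal{O}_\mu\cap\mathfrak{h}^*$ and $g\in G$ with $\ad^*_g\mu_1=\mu_2$. Since $H$ is abelian, $\ad^*_H$ acts trivially on $\mathfrak{h}^*$, so $H$ fixes both $\mu_1$ and $\mu_2$; in particular $H\subset G_{\mu_2}$ and, because $gG_{\mu_1}g^{-1}=G_{\mu_2}$, also $gHg^{-1}\subset G_{\mu_2}$. Any torus in the compact group $G_{\mu_2}$ containing the maximal torus $H$ of $G$ must coincide with $H$, so $H$ is a maximal torus of $G_{\mu_2}$; the same reasoning shows $gHg^{-1}$ is too. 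Conjugacy of maximal tori inside $G_{\mu_2}$ then yields $h\in G_{\mu_2}$ with $h(gHg^{-1})h^{-1}=H$, so $hg\in N_G(H)$, and
\[ \ad^*_{hg}\mu_1 \ =\ \ad^*_h\bigl(\ad^*_g\mu_1\bigr) \ =\ \ad^*_h\mu_2 \ =\ \mu_2, \]
so that the class of $hg$ in $\mathcal{W}=N_G(H)/H$ sends $\mu_1$ to $\mu_2$.

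The main obstacle, and the step where I would be most careful, is reconciling the abstract Weyl group of the paper, defined as the subgroup of $\mathsf{O}(E)$ generated by the root reflections $\sigma_\alpha$, with the concrete quotient $N_G(H)/H$ used above. This requires taking $E$ to be the real form of $\mathfrak{h}^*$ equipped with the inner product induced by the $\ad$-invariant one on $\mathfrak{g}$, verifying that the $\mathfrak{h}$-roots of $\mathfrak{g}$ satisfy the axioms listed in the preliminaries, and checking that each reflection $\sigma_\alpha$ is realised by some element of $N_G(H)$ (via the $\mathfrak{sl}_2$-triple or coroot construction attached to $\alpha$). Modulo this identification, which is a classical fact in compact Lie group theory, the two steps above complete the proof.
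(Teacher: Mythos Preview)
The paper does not actually prove this statement: it is stated as a named result (attributed to Bott) and immediately followed by a reference to Bernatska \& Holod for further details, with no argument given. So there is no proof in the paper to compare your proposal against.

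That said, your sketch is the standard proof and is correct in outline. The two main steps---nonemptiness of $\mathcal{O}_\mu\cap\mathfrak{h}^*$ via conjugacy of maximal tori, and Weyl-relatedness of any two intersection points via conjugacy of maximal tori inside the stabiliser $G_{\mu_2}$---are exactly how this is done in the compact case. One small point worth tightening: the conjugacy-of-maximal-tori argument inside $G_{\mu_2}$ is usually stated for \emph{connected} compact groups, so you should remark that the coadjoint stabiliser $G_{\mu_2}$ is connected (it contains the maximal torus $H$, and for compact connected $G$ the centraliser of any element of $\mathfrak{h}$, hence of any $\mu\in\mathfrak{h}^*$ under the invariant identification, is connected). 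Your closing caveat about identifying the reflection-group $\mathcal{W}$ with $N_G(H)/H$ is also well placed; that identification is indeed the one nontrivial background fact being invoked.
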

Here we see $\mathfrak{h}^*$ a subspace of $\mathfrak{g}^*$. For details, see Guillemin \& Sternberg \cite{MR0770935}.
This implies that each coadjoint orbit $\mathcal{O}$ of $G$ is uniquely defined by a starting point $\mu_0\in\mathfrak{h}^*$ in the (closed) positive Weyl chamber (for more details, see for example Bernatska $\&$ Holod et al.\ \cite{bernatska2008geometry}).

\begin{definition}
Denote the interior of the positive Weyl chamber $W$ by $W^{\circ}$ and its boundary by $\partial W$. The coadjoint orbit of a point $\mu_0\in W^{\circ}$ is said to be a {\em generic} orbit. The coadjoint orbit of a point $\mu_0\in\partial W$ is called a {\em degenerate} orbit.
\end{definition}

\begin{example}
\label{ex:coadjointOrbits}
The group $\SU(3)$ has exactly two coadjoint orbits, a generic one of dimension six and a degenerate one of dimension four. 
The generic orbit is denoted by $\mathcal{O}^{\mathsf{SU}(3)}$ and can be identified with
$$
\mathcal{O}^{\mathsf{SU}(3)}=\frac{\mathsf{SU}(3)}{\U(1)\times \U(1)}.
$$
The degenerate orbit is denoted by $\mathcal{O}^{\mathsf{SU}(3)}_d$ and can be identified with
$$
\mathcal{O}_d^{\mathsf{SU}(3)}=\frac{\mathsf{SU}(3)}{\mathsf{SU}(2)\times \mathsf{SU}(1)}\cong \mathbb{CP}^2.
$$
\end{example}


\section{Geometric structures of coadjoint orbits of $\mathsf{SU}(3)$}
\label{sectionCoadInfo}

In this section, we characterize coadjoint orbits of $\mathsf{SU}(3)$ by their algebraic and geometric properties and, eventually, describe their K\"ahler structure.


\subsection{Coadjoint orbits characterized by eigenvalues}
Let $(X,\{\cdot,\cdot \})$ be a Poisson manifold. A maximal connected submanifold $Y\subset X$ for which the Poisson structure descends to a symplectic structure is called a \textit{symplectic leaf}. Moreover, the Poisson manifold is foliated by its symplectic leaves.
Let $\mathfrak{g}$ be a Lie algebra with dual $\mathfrak{g}^*$. Then there is a canonical Poisson structure on $\mathfrak{g}^*$ called the Lie-Poisson structure.  In this case, the symplectic leaves are the coadjoint orbits.
A smooth function $C:\mathfrak{g}\to\R$ is called a {\em Casimir function} if $C$ is constant on each coadjoint orbit, or equivalently, if $C$ is invariant under the coadjoint action of $G$ on $\mathfrak{g}^*$. { We further recall that the \textit{joint} level sets of Casimir functions $C:\mathfrak{g}^*\to \mathbb{R}$ are symplectic manifolds (as they are coadjoint orbits).}

In the case of $\mathfrak{su}(n)$, the Casimir functions are given by \[ C_j:\mathfrak{su}^*(n)\to \R, \quad A\mapsto \trace(A^j) \] for $j=1,\dots,n-1$ where $A^j:= A \circ \dots \circ A$ is the $j$-fold composition.
For $n=3$, we may therefore study the dynamics on the intersection of the Casimir level sets $C_1=c_1$ and $C_2=c_2$ for some constants $c_1$ and $c_2$.
The set $\{C_1=c_1\} \cap \{C_2 = c_2\}$ can be identified with the space
\begin{equation*}
\mathcal{M} := \{	A\in \operatorname{Mat}_3(\C) \mid A= \overline{A}^T, \ \trace(A) = 0, \ \trace(A^2) = c_1	, \ \trace(A^3)=c_2 	\}
\end{equation*}
which can be seen as phase space of a $\SU(3)$-invariant dynamical system.
In the generic case, we have $\dim(\mathcal{M})=6$ and, in the degenerate case, $\dim(\mathcal{M})=4$ which corresponds to the dimensions of two coadjoint orbits of $\mathsf{SU}(3)$, see Example \ref{ex:coadjointOrbits}. 

\begin{lemma}
The phase space $\mathcal{M}$
is isomorphic as vector space to the space
$$
\{A   \in \operatorname{Mat}_3(\C) \mid A=\overline{A}^T,  A \text{ has eigenvalues } \lambda_1 \geq \lambda_2 \geq \lambda_3\text{ with } \lambda_1+\lambda_2+\lambda_3=0	\}.
$$
	\end{lemma}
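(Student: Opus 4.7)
The statement really says that imposing the two Casimir conditions $\trace(A^2)=c_1$ and $\trace(A^3)=c_2$ on a traceless Hermitian $3\times 3$ matrix is the same thing as prescribing its (real, ordered) spectrum. So my plan is to go through the characteristic polynomial and use Newton's identities to translate between power sums and elementary symmetric functions of the eigenvalues.

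First I would observe that any $A\in\mat_3(\C)$ with $A=\overline{A}^T$ is Hermitian and therefore unitarily diagonalisable with real eigenvalues $\lambda_1,\lambda_2,\lambda_3\in\R$. Up to reordering I may assume $\lambda_1\geq \lambda_2\geq \lambda_3$. The characteristic polynomial is
\begin{equation*}
p_A(t) \;=\; t^3 - e_1 t^2 + e_2 t - e_3,
\end{equation*}
where $e_1=\lambda_1+\lambda_2+\lambda_3$, $e_2=\lambda_1\lambda_2+\lambda_1\lambda_3+\lambda_2\lambda_3$, $e_3=\lambda_1\lambda_2\lambda_3$ are the elementary symmetric functions and the power sums are $p_k=\lambda_1^k+\lambda_2^k+\lambda_3^k=\trace(A^k)$.

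Next I would apply Newton's identities
\begin{align*}
p_1 &= e_1, \\
p_2 &= e_1 p_1 - 2 e_2, \\
p_3 &= e_1 p_2 - e_2 p_1 + 3 e_3,
\end{align*}
under the constraint $\trace(A)=0$, i.e.\ $p_1=e_1=0$. These reduce to $e_2=-\tfrac{1}{2}c_1$ and $e_3=\tfrac{1}{3}c_2$. Hence the full characteristic polynomial $p_A(t)=t^3-\tfrac12 c_1\, t \cdot (-1)\cdot(-1)\ldots$ is determined by $c_1$ and $c_2$ alone (I would write it cleanly as $t^3-\tfrac{c_1}{2}\,t-\tfrac{c_2}{3}=0$, up to sign conventions to double-check), so its roots $\lambda_1,\lambda_2,\lambda_3$ are determined as an unordered multiset by $c_1,c_2$. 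Conversely, given eigenvalues with $\sum\lambda_i=0$, the quantities $c_1=\sum\lambda_i^2$ and $c_2=\sum\lambda_i^3$ are clearly determined. Thus the two descriptions pick out exactly the same subset of Hermitian traceless matrices (reading the right-hand side of the lemma as Hermitian matrices whose spectrum is the one determined by $c_1,c_2$, sorted in non-increasing order).

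Finally I would package the identification: the inclusion $\subseteq$ is the observation that every $A\in\mathcal M$ is Hermitian traceless with eigenvalues $\lambda_i$ uniquely fixed by $c_1,c_2$ (so it lies in the right-hand set), and the inclusion $\supseteq$ is that any Hermitian traceless matrix with the prescribed spectrum automatically satisfies $\trace(A^k)=p_k$ for $k=2,3$. There is no real obstacle here; the only mild subtlety is that "isomorphic as vector space" is somewhat loose terminology (neither side is a linear subspace, being a level set of two polynomial invariants), and I would phrase the conclusion as an equality of sets (equivalently, as a $\SU(3)$-equivariant bijection realising the spectral parametrisation). The Newton's-identities computation is the entire substance of the argument.
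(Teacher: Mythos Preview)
Your proposal is correct and follows essentially the same approach as the paper: both arguments express the characteristic polynomial of a traceless Hermitian $3\times 3$ matrix in terms of $\trace(A^2)$ and $\trace(A^3)$ (the paper writes $\chi_A=-\lambda^3-\tfrac12\trace(A^2)\lambda+\tfrac13\trace(A^3)$ directly, whereas you derive the same coefficients via Newton's identities), and then read off that fixing $(c_1,c_2)$ is equivalent to fixing the ordered spectrum. Your observation that ``isomorphic as vector space'' is loose terminology, and that the correct conclusion is an equality of sets, is well taken.
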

	\begin{proof}
		{Recall that the characteristic polynomial of a $3\times 3$-matrix $A$ is given by
		\begin{align*}
		\frac{1}{6}\left(\trace^3(A)+2\trace(A^3)-3\trace(A)\trace(A^2)\right)\\-\frac{1}{2}\left(\trace^2(A)-\trace(A^2)\right)\lambda+\trace(A)\lambda^2-\lambda^3.
		\end{align*}}
Thus the characteristic polynomial $\chi_A$ of a traceless Hermitian matrix $A$ is of the form:
	\begin{align*}
		\chi_A=\det(A-\lambda I_3) 	&=-\lambda^3-\frac{1}{2}\trace(A^2)\lambda+\frac{1}{3}\trace(A^3)
		\\&=-\lambda^3+(\lambda_1+\lambda_2+\lambda_3)\lambda^2-(\lambda_1\lambda_2+\lambda_1\lambda_3+\lambda_2\lambda_3)\lambda+\lambda_1\lambda_2\lambda_3
		\\&=(\lambda_1-\lambda)(\lambda_2-\lambda)(\lambda_3-\lambda)
	\end{align*}
		
		{ Now note that $\trace(A) = 0, \trace(A^2) = c_1$ and $\trace(A^3)=c_2 $ from the original definition of $\mathcal{M}$ corresponds to 
		$A$ having (ordered) eigenvalues $\lambda_1\geq \lambda_2\geq \lambda_3$ with $\lambda_1+\lambda_2+\lambda_3=0$.} 
    \end{proof}
Let $\Lambda:=\Diag(\lambda_1,\lambda_2,\lambda_3)$ with $(\lambda_1,\lambda_2,\lambda_3)\in\R^3$ and set $\mathcal{O}_{\Lambda}=\{A\Lambda A^{-1}\mid A\in \SU(3)\}$.
The spectral theorem for Hermitian matrices states that the eigenvalues of a Hermitian matrix are real and that the eigenvectors corresponding to these eigenvalues are orthogonal. 
This allows to deduce the following bijective correspondence:
\begin{align*}
\begin{Bmatrix} \text{coadjoint orbits}\\\text{of } \mathsf{SU}(3) \end{Bmatrix}
&	\longleftrightarrow 
\begin{Bmatrix} \mathcal{O}_{\Lambda}\text{ with } \lambda_1+\lambda_2+\lambda_3=0 \\ \text{and } \lambda_1\geq \lambda_2\geq\lambda_3\end{Bmatrix}
\end{align*}
This leads to three types orbits (of which one is trivial):
\begin{enumerate}[(i)]
	\item {\em All three eigenvalues are distinct.} Then the stabilizer is given by $\Diag(\alpha,\beta,\overline{\alpha\beta})$ with $\alpha, \beta \in \C$ and the coadjoint orbit is
	\[ \mathcal{O}^{\SU(3)}=\frac{\SU(3)}{\U(1)\times\U(1)} =\frac{\U(3)}{\U(1)\times \U(1)\times \U(1)}. \] 
	We will see in Section \ref{sec:flagManifolds} that this orbit can be identified with a six dimensional flag manifold.
	
	\item {\em Two eigenvalues are equal.} The stabilizer is given by the block diagonal matrix $\Diag(A,\overline{\det A})$ where $A\in\U(2)$. In this case the coadjoint orbit can be identified with
	\[ \mathcal{O}^{\SU(3)}_d =\SU(3)/\U(2)\cong \mathbb{CP}^2. \]
	\item {\em All eigenvalues are equal}.  In this case, the stabilizer is $\SU(3)$ so that the orbit is trivial (since $\lambda_1+\lambda_2+\lambda_3=3\lambda=0$ implies $\lambda=0$). 
\end{enumerate}

\begin{remark}
	More generally, setting $\Lambda:=\Diag(\lambda_1,\dots,\lambda_n)$ with $(\lambda_1,\dots,\lambda_n) \in \R^n$ and $\lambda_1\geq\cdots\geq \lambda_n$ and $\sum_{i=1}^n \lambda_i=0$, the coadjoint orbits of $\SU(n)$ are of the form $\mathcal{O}_{\Lambda}=\{A\Lambda A^{-1}\mid A\in \SU(n)\}$.
\end{remark}



\subsection{Coadjoint orbits seen as flag manifolds}

\label{sec:flagManifolds}

The four dimensional degenerate orbit space has a nice geometrical interpretation as $\mathbb{CP}^2$. We will now see that there is also a nice geometric characterisation of the generic orbit as a so-called flag manifold (of which the degenerate orbit $\mathbb{CP}^2$ is a special case).

\begin{definition}
Consider $\C^n$ and let $r \in \{1, \dots, n\}$.
A {\em flag} $f_{{k_1},\dots,{k_r};n}$ in $\C^n$ is a nested sequence of vector subspaces $V_{k_1}\subsetneq \dots \subsetneq V_{k_r}$ in $\C^n$ such that $\dim_{\C}V_{k_j}=k_j$ for all $1 \leq j \leq r$. The space of all such flags is denoted by $\mathbb{F}_{{k_1},\dots,{k_r}}(\C^n)$.
\end{definition}

\begin{remark}
$\mathbb{F}_{{k_1},\dots,{k_r}}(\C^n)$ is a compact, complex and smooth manifold and is usually referred to as the \textit{flag manifold}. 
Note that all flag manifolds are in fact generalisations of projective spaces. The flag manifold $\mathbb{F}_{1}(\C^n)$ is precisely $\C\mathbb{P}^{n-1}$.  
Moreover, the flag manifold $\mathbb{F}_{k}(\C^n)$ is the space of $k$-dimensional vector subspaces of $\C^n$, i.e., the Grassmannian.
\end{remark}

\noindent
For $n=3$, $k_1=1$ and $k_2=2$, we obtain the generic coadjoint orbit of $\mathsf{SU}(3)$
\begin{align*}
 \mathcal{O}^{\SU(3)}= \mathbb{F}_{1,2}(\mathbb{C}^3) = \{(L,P)\mid L\subset P\subset \mathbb{C}^3 \text{ with } \dim_{\C}(L)=1, \dim_{\C}(P)=2\}.
\end{align*}
This space also appears in the context of so-called {Wallach manifolds} introduced by Wallach \cite{wallach1972compact} which we will describe now. Consider the linear map $J:\mathbb{C}^{2n}\to\mathbb{C}^{2n}$ defined as 
\[ J(z_1,\dots,z_n,z_{n+1},\dots,z_{2n})  = (z_{2n},\dots,z_{n+1},-z_n,\dots, -z_1).\] 
The group $\mathsf{Sp}(n)$ is defined as
\[ \mathsf{Sp}(n) = \{A\in\mathsf{SU}(2n)\mid AJ = J\bar{A}\}. \] 
Let $\mathsf{F}(4)$ be the 52-dimensional exceptional simple Lie group. Moreover, recall that the universal cover of the orthogonal group $\mathsf{SO}(8)$ is called the spin group and is denoted by $\mathsf{Spin}(8)$. 
The Wallach manifolds $W^6$ of dimension six, $W^{12}$ of dimension twelve, and $W^{24}$ of dimension twenty-four are given by
\begin{align*}
W^6:=\frac{\SU(3)}{\U(1)\times\U(1)},\quad 
W^{12}:=\frac{\Sp(3)}{\Sp(1)\times \Sp(1)\times \Sp(1)} \quad\text{and}\quad
W^{24}:=\frac{\mathsf{F}(4)}{\Spin(8)}.
\end{align*}
These are all compact Riemannian manifolds of positive curvature. Moreover, these manifolds can be thought of as the total space of the following homogeneous fibrations:
\begin{align*}
\mathbb{S}^2&\longrightarrow W^6 \longrightarrow \C\mathbb{P}^2,\\
\mathbb{S}^4&\longrightarrow W^{12} \longrightarrow \mathbb{HP}^2,\\
\mathbb{S}^8&\longrightarrow W^{24} \longrightarrow \mathbb{O}\mathbb{P}^2.
\end{align*} 
For more details on these fibrations, we refer the reader to Dearricott $\&$ Galaz-Garci\'a et al.\ \cite{dearricott2014geometry} and the references therein.

\subsection{Bruhat decomposition and induced coordinates} 

So far, we described the coadjoint orbits in terms of matrices and gave a geometrical interpretation in terms of flag manifolds. Now we will focus on the analytical structure which will allow us to determine the Laplace operator to the aim of finding the corresponding Green's function. For that, we need a bit of notation:


A closed subgroup $P$ of a Lie group $G$ is {\em parabolic} if the quotient variety $G/P$ satisfies the following property: for any variety $Y$, the projection map $(G/P)\times Y\to Y$ maps closed set to closed sets. 
Furthermore, a closed, connected and solvable subgroup of $G$ is called a {\em Borel subgroup}.
Note that all Borel subgroups are mutually conjugate.

Given a Lie algebra $\mathfrak{g}$ over $\R$, its complexification is defined by $
	\mathfrak{g}^{\C}:=\mathfrak{g}\otimes_{\R}\C.$
%
Let $G$ be a compact and connected Lie group. The complexification of $G$ is defined as the complex Lie group $G^{\C}$ that contains $G$ as a closed subgroup and that has the following (universal) property: every homomorphism $f:G\to L$, for every complex Lie group $L$,
lifts to a homomorphism $G^{\C}\to L$. Moreover, on the level of Lie algebras, $\Lie(G^{\C})=\mathfrak{g}^{\C}$ is the complexification of $\Lie(G)=\mathfrak{g}$.


The following result was proven in more generality by Picken \cite{picken1990duistermaat}, but we sketch the proof here for the reader's convenience.

\begin{lemma}
\label{lem:isoForFlag}
Denote by $B$ the subgroup of upper triangular matrices of $\mathsf{SL}(3,\C)$. Then there is an isomorphism $\mathsf{SU}(3)/\mathbb{T}^2 \cong \mathsf{SL}(3,\C)/B$.
\end{lemma}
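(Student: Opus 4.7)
The plan is to show that $\mathsf{SU}(3)$ acts transitively on $\mathsf{SL}(3,\C)/B$ by left multiplication and to identify the stabilizer of the base point $eB$ with the maximal torus $\mathbb{T}^2$. The claim then follows from the standard fact that a transitive smooth action of a compact Lie group on a Hausdorff manifold yields an equivariant diffeomorphism between the quotient by the stabilizer and the orbit.

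First I would consider the smooth map
\[
\pi \colon \mathsf{SU}(3) \ \longrightarrow\ \mathsf{SL}(3,\C)/B, \qquad g \longmapsto gB.
\]
The heart of the argument is surjectivity, which amounts to showing that every coset in $\mathsf{SL}(3,\C)/B$ has a representative in $\mathsf{SU}(3)$. This is precisely the content of the Iwasawa (or QR) decomposition: applying Gram--Schmidt orthonormalization to the columns of any $A \in \mathsf{SL}(3,\C)$ yields a factorisation $A = QR$ with $Q \in \mathsf{SU}(3)$ and $R \in B$ upper triangular with positive real diagonal (and unique under that normalisation). Hence $AB = QB$, and $\pi$ is surjective.

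Next I would compute the stabilizer of $eB$, namely $\mathsf{SU}(3) \cap B$. A matrix that is simultaneously unitary and upper triangular must be diagonal: its first column has length $1$ and is supported on the first coordinate, so it equals $e^{i\theta_1} e_1$; induction on the remaining columns, each of which is unitary and upper-triangular in the subsequent positions, forces each to be a unit-modulus scalar multiple of the corresponding standard basis vector. The determinant-one condition then reduces these diagonals to
\[
\Diag\bigl(e^{i\theta_1},\, e^{i\theta_2},\, e^{-i(\theta_1+\theta_2)}\bigr), \qquad \theta_1, \theta_2 \in \R,
\]
which is precisely $\mathbb{T}^2$.

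Combining the two steps, $\pi$ descends to a smooth $\mathsf{SU}(3)$-equivariant bijection $\bar{\pi} \colon \mathsf{SU}(3)/\mathbb{T}^2 \to \mathsf{SL}(3,\C)/B$. As a sanity check on dimensions, $\dim_{\R} \mathsf{SU}(3)/\mathbb{T}^2 = 8 - 2 = 6$, while $\dim_{\R} \mathsf{SL}(3,\C)/B = 16 - 10 = 6$. Since $\mathsf{SU}(3)/\mathbb{T}^2$ is compact and $\mathsf{SL}(3,\C)/B$ is Hausdorff, $\bar{\pi}$ is automatically a homeomorphism, and together with the smoothness of $\pi$ this upgrades to a diffeomorphism. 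The main subtlety beyond invoking the Iwasawa decomposition is the column-by-column identification $\mathsf{SU}(3) \cap B = \mathbb{T}^2$, which I expect to be the technical heart of the verification.
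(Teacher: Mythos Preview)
Your proposal is correct and follows essentially the same approach as the paper: both arguments establish $\mathsf{SL}(3,\C)=\mathsf{SU}(3)\cdot B$ via Gram--Schmidt (equivalently the Iwasawa/QR decomposition) and identify the intersection $\mathsf{SU}(3)\cap B$ with the diagonal torus $\mathbb{T}^2$. If anything, your write-up is more thorough---the paper simply asserts $\mathsf{SU}(3)\cap B=\mathbb{T}^2$ without the column-by-column verification you supply, and it does not explicitly address the smooth structure or invoke compactness to upgrade the bijection to a diffeomorphism.
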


\begin{proof}
Take a matrix $g\in\mathsf{SL}(3,\C)$ and denote its columns by $\underline{g}_k$ for $k=1,2,3$ so that the matrix can be written as $g=(\underline{g}_1,\underline{g}_2,\underline{g}_3)$. 
Let $\langle \cdot, \cdot \rangle_H$ be the Hermitian inner product on $\C^3$ given by $\langle x, y \rangle_H = x_1 \bar{y}_1 +x_2 \bar{y}_2  +x_3 \bar{y}_3$ for $x=(x_1, x_2, x_3), y=(y_1, y_2, y_3) \in \C^3$.
A priori, the vectors $\underline{g}_k$ are not orthonormal.
Nevertheless, using the Gram-Schmidt procedure they can be made orthonormal:
\begin{align*}
\underline{g}_1'	:&= \underline{g}_1
\\  \underline{g}_2':&=  \underline{g}_2 - \frac{\langle  \underline{g}_2, \underline{g}_1'\rangle}{\langle  \underline{g}_1', \underline{g}_1'\rangle} \underline{g}_1'
\\   \underline{g}_3':&= \underline{g}_3 - \frac{\langle  \underline{g}_3, \underline{g}_2'\rangle}{\langle  \underline{g}_2', \underline{g}_2'\rangle} \underline{g}_2'  - \frac{\langle  \underline{g}_3, \underline{g}_1'\rangle}{\langle  \underline{g}_1', \underline{g}_1'\rangle} \underline{g}_1'
\end{align*}
Normalising each of them via $\underline{v}_k:=\frac{\underline{g}_k'}{||\underline{g}_k'||}$, the matrix given by $U=(\underline{v}_1,\underline{v}_2,\underline{v}_3)$ is an element of $\mathsf{SU}(3)$. Moreover, it satisfies $U=gb'$ for some upper triangular matrix $b' \in B$ which performs the Gram-Schmidt procedure on $g$.
This means that we can write $\mathsf{SL}(3,\C)=\mathsf{SU}(3)B$ and that $\mathsf{SU}(3)\cap B=\mathbb{T}^2$ (this intersection exactly result in the 2-torus). This induces the wanted isomorphism in the following way: take an equivalence class $[g]_B\in\mathsf{SL}(3,\C)/B$ which can also be written as $[Ub]_B$ (see Gram-Schmidt procedure). It is then mapped to $[U]_{\mathbb{T}^2}\in\mathsf{SU}(3)/\mathbb{T}^2$.
\end{proof}

The power of this isomorphism lies in the fact that one can make the transition from the geometrical picture of the coadjoint orbit (as being the flag manifold realised as the homogeneous space $\mathsf{SU}(3)/\mathbb{T}^2$) to the complex manifold $\mathsf{SL}(3,\C)/B$. This is convenient as there exists a well-developed theory of so-called Bruhat coordinates on the complex manifold, which will be useful for our approach.


\begin{definition} Let $\mathfrak{g}$ be a semi-simple Lie algebra with Cartan algebra $\mathfrak{h}$ and root system $\Phi$. The weight space decomposition of a Lie algebra $\mathfrak{g}$ is given by the direct sum decomposition $
		\mathfrak{g} =\mathfrak{h}\oplus  \bigoplus_{\alpha\in\Phi} \mathfrak{g}_{\alpha}
$
where $\mathfrak{g}_{\alpha}=\{X\in\mathfrak{g}\mid [H,X] =\alpha(H) \text{ for } H\in\mathfrak{h}\}$. All $\mathfrak{g}_\alpha$ that are non zero are called {\em roots}.
\end{definition}

Recall that a Lie algebra is {\em simple} if $\mathfrak{g}$ is not abelian and $\mathfrak{g}$ has no non-trivial ideals.	
For a simple Lie algebra $\mathfrak{g}$, we have the triangular decomposition \[ \mathfrak{g} = \mathfrak{n}_- \oplus \mathfrak{h} \oplus \mathfrak{n}_+ \] where $\mathfrak{h}$ is the Cartan subalgebra and $\mathfrak{n}_{\pm}:=\bigoplus_{\alpha\in\Phi^{\pm}}\mathfrak{g}_{\alpha}$ are the so-called upper and lower nilpotent subalgebras consisting of the positive (resp.\ negative) roots of $\mathfrak{g}$.
Moreover, set \[ \mathfrak{b}_{\pm}: = \mathfrak{h} \oplus \mathfrak{n}_{\pm} \] and call them the {\em upper} and {\em lower Borel subalgebras}. On the Lie group level, $B^{\pm}$ and $N^{\pm}$ are called the {\em Borel subgroups} and {\em unipotent subgroups} of the Lie group $G$. In particular, we say that $N^-$ is the {\em opposite} unipotent subgroup.

Let $G$ be a semisimple Lie group with Lie algebra $\mathfrak{g}$. Consider a Borel subgroup $B\leq G$ and the Weyl group $\mathcal{W}$ associated with $G$. Then the {\em Bruhat decomposition} of $G$ is given by $G=\bigcup_{w\in \mathcal{W}}BwB$.
This decomposition gives rise to the {\em cell decomposition} of the homogeneous space $G/B=\bigcup_{w\in \mathcal{W}} BwB/B$.
Each of the $BwB/B$ corresponds to an affine space of dimension $\ell(w)$ where $\ell(w)$ is the length of the Weyl group element $w$ given by the minimal $k$ such that $w$ can be written as a product of $k$ generators of the Weyl group. Note that there is always an element in the Weyl group $\mathcal{W}$ which has maximal length, in this case the length of this element equals the number of positive roots $\Phi^+$. 
This element in the Weyl group is denoted by $w_o\in\mathcal{W}$. It has the property that $w_0(\Phi^+)=\Phi^-$, i.e. it interchanges the positive and negative roots. In the Bruhat decomposition \[ G/B=\bigcup_{w\in \mathcal{W}}BwB/B, \] 
the element $w_0$ gives rise to an open subset $Bw_0B$ which is called the {\em big cell} and is denoted by $X_{w_0}$. The flag manifold is identified with the space $\mathsf{SL}(3,\C)/B$ where $B$ is the subgroup of upper triangular matrices. 
In this context, we have
\begin{align*}
N^-= \left\{ \left. 	
\begin{pmatrix}
1	& 0	&	0\\
z_1&1	&	0\\
z_2	&z_3&1
\end{pmatrix}
\right|
z_1,z_2,z_3\in \C
\right\}.
\end{align*} 
\begin{prop}[\cite{boyer1994topology}]
	$N^-$ acts freely and transitively on the big cell $X_{w_0}$. Thus we may identify $X_{w_0}$ with $N^-$.
\end{prop}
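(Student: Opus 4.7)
The plan is to unfold the definition of $X_e$ in terms of $N^-$ and then verify the two axioms of a simply transitive action. Concretely, I would describe the big cell as the image of $N^-$ under the natural projection $\pi\colon G \to G/B$; that is, $X_e = \{nB \mid n \in N^-\} \subset G/B$. This is a standard consequence of the Bruhat decomposition together with the fact that $N^- B$ is open and dense in $G$, so that $N^-$ parametrises the open Schubert cell via the orbit map $\phi\colon N^- \to X_e$, $n \mapsto nB$.

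Transitivity is then immediate: the left action of $N^-$ on $G/B$ is $n' \cdot gB = (n'g)B$, so starting from the base point $eB \in X_e$ one reaches any $nB \in X_e$ by acting with $n$. Equivalently, $\phi$ is surjective by construction.

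The real content is freeness, which I would reduce to the algebraic identity $N^- \cap B = \{e\}$. In the matrix model of $\mathsf{SL}(3,\C)/B$ displayed just before the proposition, $B$ consists of upper triangular matrices and $N^-$ of lower triangular matrices with $1$'s on the diagonal, so an element of the intersection is simultaneously upper and lower triangular with unit diagonal, hence the identity. Intrinsically, this follows from the triangular decomposition $\mathfrak{g} = \mathfrak{n}_- \oplus \mathfrak{h} \oplus \mathfrak{n}_+$, because $\Lie(N^-) = \mathfrak{n}_-$ meets $\Lie(B) = \mathfrak{h} \oplus \mathfrak{n}_+$ only in $\{0\}$. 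With this in hand, suppose $n \cdot (n'B) = n'B$ for some $n, n' \in N^-$. Then $(n')^{-1} n n' \in B$, and since $N^-$ is a subgroup, this element also lies in $N^-$; therefore it lies in $N^- \cap B = \{e\}$, which forces $n = e$.

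Combining these two points, $\phi\colon N^- \to X_e$ is an $N^-$-equivariant bijection. Since $\phi$ is a restriction of the smooth projection $\pi$ and a dimension count gives $\dim_{\C} N^- = |\Phi^+| = \dim_{\C} G/B$, $\phi$ is in fact a diffeomorphism onto the open subset $X_e$, yielding the desired identification $X_e \cong N^-$. The main obstacle is really only the clean matching of $X_e$ with $N^- B / B$; once the Bruhat and triangular decompositions are in place, the freeness and transitivity statements are essentially formal.
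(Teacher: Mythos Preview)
Your argument is correct and is the standard proof of this fact. Note, however, that the paper does not supply its own proof of this proposition: it is stated with a citation to \cite{boyer1994topology} and left unproved, so there is nothing in the paper to compare against beyond observing that your write-up fills in exactly the details one would expect.
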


Note that the translates $g\cdot N_-$ of the big cell (under the $G$-action) cover the whole flag manifold.

The flag manifold $\mathbb{F}_{1,2}(\C^3)$ can also be seen as
\begin{align}
\mathbb{F}_{1,2}(\C^3)=\{(V_1,V_2)\in \mathbb{CP}^2\times (\mathbb{CP}^2)^*\mid V_2(V_1) =0 \},
\end{align}
where $(\mathbb{CP}^2)^*$ is the {dual complex projective} plane, which means intuitively that the manifold $\mathbb{F}_{1,2}(\C^3)$ consists of all pairs $(V_1,V_2)$ where $V_2$ is a projective line in $\mathbb{CP}^2$ and $V_1$ is a point on the line.

\begin{theorem}[\cite{boyer1994topology}] 
Consider the group $\mathsf{SL}(3,\C)$ and the Borel subgroup $B$ of upper triangular matrices. 
	Let $R_1:=[0:0:1]\in \mathbb{CP}^2$ and $R_2:={\operatorname{span}\{[0:1:0],[0:0:1]\}}\in (\mathbb{CP}^2)^*$. Then the (Bruhat) cell-decomposition of $\mathsf{SL}(3,\C)/B$ consists of the following six cells (where the indices in the Bruhat cell correspond to the group elements of the symmetric group in three elements, see Table \ref{table:bruhatExpression}):
	\begin{enumerate} 
		\item 
		The {\em big cell}, which has codimension zero, is given by 
		\begin{align*}
			X_e := \{(V_1,V_2)\in \mathbb{F}_{1,2}(\C^3)\mid  R_2 (V_1) \neq 0, \  V_2 (R_1) \neq 0 \}.
		\end{align*}
		\item 
		There are two Bruhat cells of codimension one given by 
		\begin{align*}
			X_{(1,2)} &:= \{(V_1,V_2)\in \mathbb{F}_{1,2}(\C^3)\mid  R_2 (V_1)=0, \ R_ 1\neq V_1, \  R_2\neq V_2\}, \\
			X_{(2,3)} &: = \{(V_1,V_2)\in \mathbb{F}_{1,2}(\C^3)\mid  V_2 (R_1)=0, \   R_ 1\neq V_1, \  R_2\neq V_2  \} .
			\end{align*}
		\item 
		There are two Bruhat cells of codimension two given by 
		\begin{align*}
			X_{(1,2,3)} &:= \{(V_1,V_2)\in \mathbb{F}_{1,2}(\C^3)\mid V_1= R_1, \ R_2\neq V_2\} , 
			\\X_{(1,3,2)} &:= \{(V_1,V_2)\in \mathbb{F}_{1,2}(\C^3)\mid V_2 = R_2, \ R_1\neq V_1\} .
			\end{align*}
		\item 
		The $0$-cell, which has codimension three, is given by 
		 \begin{align*}
			X_{(1,3)} := \{(V_1,V_2)\in \mathbb{F}_{1,2}(\C^3)\mid V_1= R_1, \ R_2= V_2\}.
		\end{align*}
	\end{enumerate}
	Moreover, the opposite unipotent subgroup $N^-$ acts transitively on the big cell $X$. 
\end{theorem}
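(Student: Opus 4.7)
The plan is to derive the cell decomposition as a direct application of the general Bruhat decomposition of $\mathsf{SL}(3,\C)$ and then translate each resulting cell into incidence conditions for a flag $(V_1,V_2)$ with respect to the reference flag $(R_1,R_2)$. First, I would invoke the standard theorem which writes
\[ \mathsf{SL}(3,\C) \;=\; \bigsqcup_{w\in\mathcal{W}} B^- w B, \]
where $B^-$ denotes the opposite (lower-triangular) Borel subgroup. Passing to the quotient gives $\mathsf{SL}(3,\C)/B=\bigsqcup_{w\in\mathcal{W}} X_w$ with $X_w:=B^-wB/B$, and standard theory guarantees that each $X_w$ is an affine cell of dimension $\ell(w_0)-\ell(w)=3-\ell(w)$. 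Since $\mathcal{W}\cong \mathsf{Sym}(3)$ has six elements with lengths
\[ \ell(e)=0,\quad \ell((1,2))=\ell((2,3))=1,\quad \ell((1,2,3))=\ell((1,3,2))=2,\quad \ell((1,3))=3, \]
this produces precisely six cells of codimensions $0,1,1,2,2,3$, matching the four items of the theorem.

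Next, I would use the identification $\mathsf{SL}(3,\C)/B\cong \mathbb{F}_{1,2}(\C^3)$ provided by Lemma~\ref{lem:isoForFlag}: a coset $gB$ is sent to the flag $(g\cdot\langle e_1\rangle,\, g\cdot\langle e_1,e_2\rangle)$, so that the reference flag $(R_1,R_2)=(\langle e_3\rangle,\langle e_2,e_3\rangle)$ corresponds to $w_0B$ (the longest element $w_0=(1,3)$ interchanging $e_1$ and $e_3$). For each Weyl element $w$, I would compute the explicit flag attached to the coset $wB$ and then compare it with $(R_1,R_2)$ via the four elementary incidence conditions
\[ R_2(V_1)=0,\quad V_2(R_1)=0,\quad V_1=R_1,\quad V_2=R_2, \]
and their negations. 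Running through the six Weyl elements yields precisely the six geometric descriptions in the statement; for example, at $w=(1,2)$ one finds $V_1=\langle e_2\rangle\subset R_2$ with $V_1\neq R_1$ and $V_2\neq R_2$, matching $X_{(1,2)}$. Because the $B^-$-action preserves each of the incidence strata (the strata are defined by conditions invariant under lower-triangular change of basis), the $B^-$-orbit of each chosen representative fills out the corresponding stratum, which shows that these strata are exactly the Bruhat cells.

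The final assertion, that $N^-$ acts transitively and freely on the big cell $X_e$, follows from the factorization $B^-=TN^-$: the torus $T$ acts on $eB$ trivially from the left (it is absorbed into $B$ on the right), so $X_e=N^-\cdot eB$, and freeness comes from $N^-\cap B=\{e\}$. The main obstacle is bookkeeping with conventions: one must pin down whether the cells are labeled by $w$, $w^{-1}$, or $w_0 w$, and verify that the chosen labeling produces exactly the incidence conditions in the theorem (and that the reference flag is taken on the opposite side to where the Borel acts). Once this is settled, the six individual cases reduce to straightforward matrix computations, and the geometric content of the theorem is entirely determined by the general Bruhat decomposition.
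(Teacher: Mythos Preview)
The paper does not actually prove this theorem; it is quoted from the cited reference and stated without proof.  There is therefore no ``paper's own proof'' to compare against.

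Your outline is a correct way to establish the result.  Using the opposite decomposition $\mathsf{SL}(3,\C)=\bigsqcup_{w}B^-wB$ (rather than $\bigsqcup_{w}BwB$) is the right choice, because the paper labels the big cell by $X_e$; with $B^-wB/B$ the open cell is indeed $w=e$ and the $0$-cell is $w=(1,3)$.  Your identification of the reference flag $(R_1,R_2)=(\langle e_3\rangle,\langle e_2,e_3\rangle)$ as the $B^-$-fixed flag is also correct, and it immediately implies that each incidence stratum in the statement is $B^-$-stable.

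The only place where a reader might want more is the sentence ``the $B^-$-orbit of each chosen representative fills out the corresponding stratum''.  $B^-$-invariance of the strata gives one inclusion; for the other you must check that the six listed strata actually partition $\mathbb{F}_{1,2}(\C^3)$, so that each stratum contains exactly one of the six Weyl points $wB$.  This is not entirely formal: for instance one must verify that $R_2(V_1)=0$, $V_2(R_1)=0$, $V_1\neq R_1$, $V_2\neq R_2$ cannot occur simultaneously (it forces $V_1=R_2\cap V_2=R_1$).  Once this small case analysis is written out, your argument is complete; your remark that it reduces to matrix computations is accurate.
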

Recall that the Weyl group of $\mathsf{SL}(3,\C)$ is isomorphic to the {symmetric group  $\mathsf{Sym}(3)$ which has order $|\mathsf{Sym}(3)|=6$}. We now give an overview how the elements of $\mathsf{Sym}(3) $ correspond to the elements of the Weyl group and their associated Bruhat decomposition and Weyl length.

\begin{table}
\begin{center}
\begin{tabular}{|c|c|c|c|}
	\hline  
	\textsc{Group element} & \textsc{Bruhat expression} & \textsc{Length} & \textsc{Matrix representation} \\ 
	\hline \hline
	$e$ & empty word & 0 & $\begin{pmatrix} 1&0&0\\0&1&0\\0&0&1 \end{pmatrix}$ \\ 
	\hline 
	$(1,2)$ & $s_1$ & 1 & $\begin{pmatrix} 0&-1&0\\1&0&0\\0&0&1 \end{pmatrix}$ \\ 
	\hline 
	$(2,3)$ & $s_2$ & 1 & $\begin{pmatrix} 1&0&0\\0&0&1\\0&-1&0 \end{pmatrix}$  \\ 
	\hline 
	$(1,2,3)$ & $s_1s_2$ & 2 & $\begin{pmatrix} 0&0&-1\\1&0&0\\0&-1&0 \end{pmatrix}$   \\ 
	\hline 
	$(1,3,2)$ & $s_2s_1$ &  2 & $\begin{pmatrix}
	0&-1&0\\0&0&1\\-1&0&0
	\end{pmatrix}$\\ 
	\hline 
	$(1,3)$ & $s_1s_2s_1$ & 3 & $\begin{pmatrix}
	0&0&-1\\0&-1&0\\-1 &0&0
	\end{pmatrix}$ \\ 
	\hline 
\end{tabular} 
\end{center}
\caption{Bruhat expressions.}
\label{table:bruhatExpression}
\end{table}

To work on the flag manifold $\mathbb{F}_{1,2}(\C^3)$, we need explicit coordinates.  

\begin{corollary}\label{localco}
The big cell of the flag manifold $\mathbb{F}_{1,2}(\C^3) = \mathsf{SL}(3,\C) \slash B$ can be identified with $N^-$. This leads to the following coordinate chart for the big cell of $\mathbb{F}_{1,2}(\C^3)$:
\begin{align}
\label{coordinates}
N^- \to \C^3 \simeq \R^6, 
\quad 
\begin{pmatrix}
1	& 0	&	0\\
z_1&1	&	0\\
z_2	&z_3&1
\end{pmatrix}
\mapsto(z_1,z_2,z_3) \simeq (x_1,x_2,x_3,y_1,y_2,y_3).
\end{align}
\end{corollary}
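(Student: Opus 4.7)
The corollary is essentially a packaging of the preceding proposition (that $N^-$ acts freely and transitively on the big cell $X_e$) together with the explicit matrix description of $N^-$, so the plan is short and direct.

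First I would pick a basepoint for the big cell, namely $[e]_B \in \mathsf{SL}(3,\C)/B$, which by the Bruhat decomposition lies in $X_e$. Since $N^-$ acts on $\mathsf{SL}(3,\C)/B$ by left multiplication and, by the quoted proposition, acts freely and transitively on $X_e$, the orbit map
\[
\Psi\colon N^- \longrightarrow X_e, \qquad n \longmapsto [n]_B
\]
is a bijection. To upgrade this bijection to a diffeomorphism, I would invoke the standard fact that the orbit map of a free transitive smooth Lie group action on a smooth homogeneous space is a diffeomorphism; alternatively, one can observe that $N^-$ is a closed subgroup of $\mathsf{SL}(3,\C)$ with $N^- \cap B = \{e\}$, so that the multiplication map $N^- \times B \to \mathsf{SL}(3,\C)$ is an open embedding onto $N^- B$, and $X_e = N^- B/B \cong N^-$.

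Next, I would write down the coordinates explicitly. A generic element of $N^-$ is a lower triangular unipotent matrix
\[
\begin{pmatrix}
1 & 0 & 0 \\
z_1 & 1 & 0 \\
z_2 & z_3 & 1
\end{pmatrix}
\]
with three independent complex parameters $z_1, z_2, z_3$. The assignment sending this matrix to $(z_1,z_2,z_3) \in \C^3$ is clearly a bijection, and it is a biholomorphism since $N^-$ is, as a complex Lie group, isomorphic to $\C^3$ (the exponential of the strictly lower triangular nilpotent Lie algebra, which truncates because the nilpotent part has nilpotency class two). Splitting each $z_k = x_k + i y_k$ gives the real-analytic identification $N^- \simeq \C^3 \simeq \R^6$ stated in the corollary.

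I do not anticipate a real obstacle here; the only mildly careful point is to be explicit that the map given in \eqref{coordinates} really is a chart (smooth, with smooth inverse) rather than merely a set-theoretic bijection, which is why I want to invoke either the orbit-map-is-diffeomorphism principle or the direct open-embedding argument via the Gauss-type decomposition $N^- B \hookrightarrow \mathsf{SL}(3,\C)$ already used in the proof of Lemma \ref{lem:isoForFlag}.
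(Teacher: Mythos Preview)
Your proposal is correct and is exactly the argument the paper has in mind: the corollary is stated without proof, immediately after the proposition that $N^-$ acts freely and transitively on $X_e$, so the content is precisely the orbit-map identification $N^- \to X_e$, $n \mapsto [n]_B$, followed by reading off the three complex entries of a lower unipotent matrix as coordinates. Your extra care in noting why the bijection is actually a diffeomorphism (via the free transitive action or the open embedding $N^- \times B \hookrightarrow \mathsf{SL}(3,\C)$) is a welcome addition that the paper leaves implicit.
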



\subsection{The K\"ahler structure of coadjoint orbits} 

Let $M$ be a complex manifold of complex dimension $n$ with local coordinates $(z_1,\dots,z_n)\in\C^n$. Then a {\em Hermitian metric} $h$ is of the form 
\begin{align*}
		h = \sum_{i,j=1}^n h_{ij}dz_i\otimes d\overline{z}_j
\end{align*}
where $(h_{ij})_{1 \leq i, j \leq n}$ is a positive-definite Hermitian matrix. 
A complex manifold $M$ equipped with a Hermitian metric $h$ is called a {\em Hermitian manifold}.
A Hermitian manifold $(M, h)$ carries a natural symplectic form, more precisely,  
the $(1,1)$-form given by imaginary part of the Hermitian metric $h$ is symplectic and has the explicit expression 
\begin{align*}
\omega & :=-\Im(h)=-\frac{1}{2i}(h-\overline{h})  = \frac{i}{2}\sum_{1\leq i,j\leq n}h_{ij}dz_i\otimes d\overline{z}_j-h_{ji}d\overline{z}_i\otimes{dz}_j\\
&=\frac{i}{2}\sum_{1\leq i,j\leq n}h_{ij}(dz_i\otimes d\overline{z}_j-d\overline{z}_j\otimes dz_i)
=\frac{i}{2}\sum_{1\leq i,j\leq n}h_{ij}dz_i\wedge d\overline{z}_j.
\end{align*}
This $\omega$ is often referred to as the \textit{fundamental form} on $(M, h)$.

An almost complex structure $J$ on a smooth manifold $M$ is an isomorphism $J: TM \to TM$ with $J^2=-\operatorname{Id}$. Such a $J$ is integrable if the so-called {\em Nijenhuis tensor} \[ N_J(X,Y):= [X,Y] + J[JX,Y] + J[X,JY]- [JX,JY]\] vanishes for all vector fields $X,Y$ on the manifold $M$.

A symplectic manifold $(M, \omega)$ is {\em K\"ahler} if there exists an integrable almost complex structure $J$ such that the bilinear form $g(u,v):=\omega(u,Jv)$ is symmetric and positive definite for all $u,v\in TM$, i.e., $g$ is a Riemannian metric.

A Hermitian manifold $(M, h)$ resp.\ $h$ is {\em K\"ahler} if the fundamental form $-\Im(h)$ is closed, i.e.\ $-d\Im(h)=0$. Moreover, in this situation, $-\Im(h)$ is in fact a (real) symplectic form on $(M, h)$.

\begin{lemma}[Hou \& Hou \cite{hou1997differential}]
Let $(M, h)$ be a  K\"ahler manifold. Then for all $p \in M$ there exists a open neighbourhood $U$ of $p$ and a function $K_U: U \to \R$ such that
\[ h_{ij} = \partial_{z_i}\partial_{\bar{z_j}} K_U(z_i,\bar{z_j})  \quad \mbox{for all } 1 \leq i, j \leq n \]
for local complex coordinates $z$ on $U$. This locally defined function is usually called the \textit{K\"ahler potential} and denoted by $K_M$.
\end{lemma}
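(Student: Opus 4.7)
The plan is to reduce the claim to the standard local $\partial\bar\partial$-lemma for closed real $(1,1)$-forms. The key observation is that the desired identity $h_{ij} = \partial_{z_i}\partial_{\bar z_j} K_U$ is, after contracting against $\frac{i}{2}\,dz_i\wedge d\bar z_j$ and summing, equivalent to the local expression $\omega = \frac{i}{2}\,\partial\bar\partial K_U$ for the fundamental form. So the whole statement becomes: every closed real $(1,1)$-form locally admits a real potential.

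First, I would pick holomorphic coordinates centred at $p$ and shrink $U$ to a polydisc (so that both the ordinary and the $\bar\partial$-Poincaré lemmas apply). By the K\"ahler hypothesis $\omega = -\Im(h)$ is closed, so the Poincar\'e lemma produces a real $1$-form $\alpha$ on $U$ with $d\alpha = \omega$. Split $\alpha = \alpha^{1,0}+\alpha^{0,1}$; reality forces $\overline{\alpha^{1,0}} = \alpha^{0,1}$. Decomposing $d\alpha$ by bidegree and using that $\omega$ is pure type $(1,1)$ gives $\partial\alpha^{1,0}=0$ and $\bar\partial\alpha^{0,1}=0$, together with $\omega = \bar\partial\alpha^{1,0}+\partial\alpha^{0,1}$.

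Next, I would invoke the Dolbeault ($\bar\partial$-Poincar\'e) lemma on the polydisc: $\bar\partial$-closure of the $(0,1)$-form $\alpha^{0,1}$ yields a smooth $f:U\to\C$ with $\alpha^{0,1}=\bar\partial f$, and then $\alpha^{1,0}=\overline{\alpha^{0,1}}=\partial\bar f$. Substituting,
\[
\omega \;=\; \bar\partial\partial\bar f + \partial\bar\partial f \;=\; \partial\bar\partial\bigl(f-\bar f\bigr) \;=\; 2i\,\partial\bar\partial\bigl(\Im f\bigr).
\]
Setting $K_U := 4\,\Im f$, a real-valued smooth function on $U$, one obtains $\omega = \tfrac{i}{2}\,\partial\bar\partial K_U = \tfrac{i}{2}\sum_{i,j}\partial_{z_i}\partial_{\bar z_j}K_U\,dz_i\wedge d\bar z_j$. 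Comparing this with the coefficient expansion $\omega = \tfrac{i}{2}\sum_{i,j} h_{ij}\,dz_i\wedge d\bar z_j$ recalled earlier in the paper yields the identity $h_{ij}=\partial_{z_i}\partial_{\bar z_j}K_U$ termwise.

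The only non-routine ingredient is the $\bar\partial$-Poincar\'e lemma on the polydisc (which is the actual analytic content and the main obstacle, should one wish to make the proof fully self-contained); the rest is bookkeeping with bidegrees and extracting a real function out of a complex one via $\Im$. Note that $K_U$ is not canonical — it is only determined up to the real part of a holomorphic function, since $\partial\bar\partial$ annihilates $\Re(g)$ for any holomorphic $g$ — but existence is all that is asserted.
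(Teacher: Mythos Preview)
The paper states this lemma without proof, treating it as a standard fact from K\"ahler geometry; there is nothing to compare against. Your argument is the standard one: reduce to the local $\partial\bar\partial$-lemma, obtain a primitive via the ordinary Poincar\'e lemma, peel off the $(2,0)$ and $(0,2)$ parts, and then invoke the Dolbeault--Grothendieck lemma on a polydisc to produce the complex potential whose imaginary part gives $K_U$. The computation $\omega=\bar\partial\partial\bar f+\partial\bar\partial f=\partial\bar\partial(f-\bar f)=2i\,\partial\bar\partial(\Im f)$ is correct, as is the normalisation $K_U=4\,\Im f$ matching the paper's convention $\omega=\tfrac{i}{2}\sum h_{ij}\,dz_i\wedge d\bar z_j$. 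Your closing remark on the ambiguity (up to $\Re$ of a holomorphic function) is also accurate and worth keeping.
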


Note that $(h_{ij})_{1 \leq i, j \leq n}$ is defined globally, whereas the potential is only defined locally. Using the Dolbeault operators
\[\partial : =\sum_{k=1}^n\partial_{z_k}dz_k
\qquad\text{and}\qquad 
\overline{\partial}:=\sum_{k=1}^n\partial_{\overline{z}_k}d\overline{z}_k,\]
the fundamental $(1, 1)$-form can be expressed as $\omega=i\partial\overline{\partial}K_M$.

\begin{example}
On $\R^{2n}\cong \C^n$, consider the {Euclidean metric $g_E$}, the standard symplectic form $\omega_{st}$, and standard compatible complex structure $J_{st}$ given in matrix notation by
\begin{align*}
	g_E = \begin{pmatrix}
		I_n	&	0
		\\ 0 &	I_n
	\end{pmatrix}
	,&&
	\omega_{st} = \begin{pmatrix}
		0	&	I_n
		\\ -I_n&	0
	\end{pmatrix}
		,&& 
		J_{st}= \begin{pmatrix}
		0	&	-I_n
		\\ I_n&	0
	\end{pmatrix}
\end{align*}
where $I_n$ is the $(n \times n)$-unit matrix.
Then $K_{\C^n}: \C^n \to \R$ given by $K_{\C^n}(z):=\frac{|z|}{2}$ is a K\"ahler potential since
\begin{align*}
	 i\partial\overline{\partial}\left(\frac{|z|}{2}\right) =\frac{i}{2}\partial\overline{\partial}\sum_{k=1}^nz_k\overline{z}_k=\frac{i}{2}\sum_{k=1}^ndz_k\wedge d\overline{z}_k.
\end{align*}
\end{example}

An important class of K\"ahler manifolds is given by coadjoint orbits:

\begin{theorem}[Borel, \cite{Borel1954}]
Let $G$ be a semi-simple compact Lie group. Each (co)adjoint orbit has a  $G$-invariant K\"ahler structure.
\end{theorem}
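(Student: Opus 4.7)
\medskip

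\noindent\textbf{Proof plan.} My strategy is to combine three classical ingredients: the Kirillov--Kostant--Souriau (KKS) symplectic form on a coadjoint orbit, the identification of a coadjoint orbit of a compact semi-simple $G$ with a complex homogeneous space $G^{\mathbb{C}}/P$, and a compatibility computation at a single base point using the root space decomposition.

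\medskip

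First, I would equip the orbit $\mathcal{O}_{\mu}\subseteq \mathfrak{g}^*$ with the KKS form. Since tangent vectors at $\nu\in\mathcal{O}_{\mu}$ are of the form $\mathsf{ad}^*_{\xi}\nu$ for $\xi\in\mathfrak{g}$, one sets
\[
\omega_{\nu}\bigl(\mathsf{ad}^*_{\xi}\nu,\,\mathsf{ad}^*_{\eta}\nu\bigr):=\nu([\xi,\eta]).
\]
Well-definedness, $G$-invariance, closedness and non-degeneracy are standard: the kernel of $\xi\mapsto \mathsf{ad}^*_{\xi}\nu$ is exactly $\mathrm{Lie}(G_{\nu})$, and closedness follows from the Jacobi identity in $\mathfrak{g}$. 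This step already yields a canonical $G$-invariant symplectic structure and only uses that $G$ is a Lie group, not semi-simplicity or compactness.

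\medskip

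Second, I would construct a $G$-invariant integrable complex structure. Use the Killing form $\kappa$ (negative definite on the compact real form) to identify $\mathfrak{g}^*\cong\mathfrak{g}$; let $H\in\mathfrak{g}$ correspond to $\mu$. Up to conjugation $H\in\mathfrak{h}$ for a chosen Cartan subalgebra, and the stabilizer $\mathfrak{g}_{\mu}$ is the centralizer $\mathfrak{z}_{\mathfrak{g}}(H)$. Choose a positive root system $\Phi^+$ such that $-i\alpha(H)\geq 0$ for every $\alpha\in\Phi^+$, and set $\Phi^+_H:=\{\alpha\in\Phi^+\mid \alpha(H)\neq 0\}$. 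Then
\[
\mathfrak{p}:=\mathfrak{g}_{\mu}^{\mathbb{C}}\oplus\bigoplus_{\alpha\in\Phi^+_H}\mathfrak{g}_{\alpha}
\]
is a parabolic subalgebra of $\mathfrak{g}^{\mathbb{C}}$, and the natural map $G/G_{\mu}\to G^{\mathbb{C}}/P$ is a diffeomorphism, endowing $\mathcal{O}_{\mu}\cong G/G_{\mu}$ with a $G$-invariant complex manifold structure. Concretely, the almost complex structure $J$ at the base point is defined on $T_{\mu}\mathcal{O}_{\mu}\otimes\mathbb{C}\cong\bigoplus_{\alpha(H)\neq 0}\mathfrak{g}_{\alpha}$ by declaring $\bigoplus_{\alpha\in\Phi^+_H}\mathfrak{g}_{\alpha}$ to be the $(1,0)$-part; integrability follows at once from $[\mathfrak{g}_{\alpha},\mathfrak{g}_{\beta}]\subseteq\mathfrak{g}_{\alpha+\beta}$, which forces the $(1,0)$ distribution to be involutive and hence the Nijenhuis tensor to vanish.

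\medskip

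Third, I would verify that $g(\,\cdot\,,\cdot\,):=\omega(\,\cdot\,,J\cdot)$ is a $G$-invariant Riemannian metric. By $G$-invariance it suffices to check this at the base point. Using $\omega_{\mu}(X,Y)=\kappa(H,[X,Y])$ together with the fact that root spaces $\mathfrak{g}_{\alpha}$ and $\mathfrak{g}_{\beta}$ are $\kappa$-orthogonal unless $\beta=-\alpha$, a short computation reduces the claim to showing that, for each $\alpha\in\Phi^+_H$ and each non-zero $E_{\alpha}\in\mathfrak{g}_{\alpha}$, the expression $-i\alpha(H)\cdot\kappa(E_{\alpha},\overline{E_{\alpha}})$ is strictly positive. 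This positivity is guaranteed by the sign convention $-i\alpha(H)\geq 0$ on positive roots together with negative definiteness of $\kappa$ on the compact real form. Symmetry of $g$ and the fact that $\omega$ is of type $(1,1)$ with respect to $J$ then follow from the same root-theoretic bookkeeping.

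\medskip

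\noindent\textbf{Main obstacle.} The serious bookkeeping is the simultaneous compatibility step: one must choose the positive root system so that the Weyl-chamber representative $H$, the parabolic $P$, the KKS form, and the almost complex structure are coherently oriented, and then extract positivity of $g$ from sign information on roots. The integrability of $J$ is essentially free from the root decomposition, but making the signs come out right in the positivity argument, and checking that everything patches together $G$-equivariantly (not just at a single point), is where care is required.
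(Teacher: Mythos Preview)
The paper does not supply its own proof of this theorem: it is stated as a classical result attributed to Bott and then used as background, with no argument given. So there is nothing in the paper to compare your proposal against line by line.

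That said, your proposal is the standard proof of this fact and is correct in outline. The three steps --- KKS form, identification $G/G_\mu \cong G^{\mathbb{C}}/P$ via a parabolic built from the root system, and the positivity check $-i\alpha(H)\,\kappa(E_\alpha,\overline{E_\alpha})>0$ for $\alpha\in\Phi^+_H$ --- are exactly the classical ingredients (see e.g.\ Besse, Kirillov, or Borel's original paper on K\"ahler structures on $G/T$). Your identification of the ``main obstacle'' is also accurate: the only genuine care is in choosing the Weyl chamber so that the sign conventions on $\alpha(H)$ line up with the definiteness of $\kappa$ on the compact form, and in noting that $G$-invariance propagates the base-point computation to the whole orbit. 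None of this is problematic, and the paper's authors would presumably have pointed to exactly this argument had they chosen to include a proof.
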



The idea is to study the point vortex dynamics modelled on the degenerate orbit $\mathbb{CP}^2$ and the generic orbit given by flag manifold ${F}_{1,2}(\mathbb{C}^3)$. Therefore it is useful to know their K\"ahler potentials.

\begin{lemma}[{Picken \cite{picken1990duistermaat}}]
\label{lem:potentialOnOrbits}
The K\"ahler potentials on $\mathbb{CP}^n$ and on the flag manifold are given by the following logarithmic functions depending on local coordinates $(1:z_1:z_2:\cdots:z_n)$ on $\mathbb{CP}^n$ and the local coordinates from Corollary \ref{localco} for the flag manifold $\mathbb{F}_{1,2}(\mathbb{C}^3)$.
\begin{align}\label{potential}
K_{\mathbb{CP}^n}&=  \log\left(1+\sum_{k=1}^{n}|z_k|^2\right),
\\ K_{{\mathbb{F}_{1,2}(\mathbb{C}^3)}}&=  \log\left( (1+|z_1|^2+|z_2|^2)(1+|z_3|^2+|z_1z_3-z_2|^2)	\right) =: \log(K_1 K_2).
\end{align}
\end{lemma}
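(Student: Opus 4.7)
The unifying idea is that in both cases the K\"ahler potential arises as the pullback of Fubini--Study potentials under an equivariant projective embedding; for $\mathbb{CP}^n$ this embedding is the identity, while for the flag manifold it is the product of the two fundamental projective embeddings associated with the two fundamental weights of $\mathsf{SL}(3,\C)$.

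For $\mathbb{CP}^n$, I would work on the standard affine chart where the first homogeneous coordinate is non-zero, so that points are parametrized by $(1:z_1:\cdots:z_n)$. Setting $K(z) := \log\bigl(1+\sum_k|z_k|^2\bigr)$, a short direct computation (expanding $\partial\bar\partial$ and using $\partial\log f = f^{-1}\partial f$) gives
\begin{equation*}
i\,\partial\bar\partial K \;=\; \frac{i}{\bigl(1+|z|^2\bigr)^2}\sum_{i,j}\bigl((1+|z|^2)\delta_{ij}-\bar z_i z_j\bigr)\,dz_i\wedge d\bar z_j,
\end{equation*}
which is (up to the standard normalisation) the Fubini--Study form; hence $K$ is a K\"ahler potential. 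This is entirely routine and the coefficient reproduces precisely the announced expression.

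For the flag manifold, I would use the Bruhat coordinates of Corollary~\ref{localco}, writing a point as the matrix $Z$ whose columns span the flag $V_1\subset V_2\subset \C^3$. Then $V_1=\C\cdot Ze_1$ and $V_2=\mathrm{span}(Ze_1,Ze_2)$, so the double Pl\"ucker embedding
\begin{equation*}
\mathbb{F}_{1,2}(\C^3) \ \hookrightarrow\ \mathbb{P}(\C^3)\times \mathbb{P}\bigl(\textstyle\bigwedge^2\C^3\bigr),\qquad (V_1,V_2)\ \mapsto\ \bigl([Ze_1],\,[Ze_1\wedge Ze_2]\bigr),
\end{equation*}
is a holomorphic embedding which on the big cell takes $(z_1,z_2,z_3)$ to the pair with representative vectors $(1,z_1,z_2)$ and $e_1\wedge e_2+z_3\,e_1\wedge e_3+(z_1z_3-z_2)\,e_2\wedge e_3$. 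The squared Hermitian norms of these representatives are exactly $K_1$ and $K_2$. Pulling back the Fubini--Study potentials on the two factors and summing yields $\log K_1+\log K_2=\log(K_1K_2)$. The sum is still a K\"ahler potential because pullback commutes with $\partial\bar\partial$ and the embedding is a holomorphic immersion of the big cell onto a product of quasi-projective varieties, so $i\partial\bar\partial\log(K_1K_2)$ is a positive $(1,1)$-form.

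The only real subtlety, and the point I expect to spend most care on, is matching this construction with the canonical K\"ahler structure on the coadjoint orbit. By Bott's theorem each coadjoint orbit carries an equivariant K\"ahler structure, and on the generic orbit of $\mathsf{SU}(3)$ the choice is parametrized by a pair of positive numbers, one for each fundamental weight; our decomposition $\log K_1+\log K_2$ corresponds to taking both weights equal to one, and general multiples are obtained by the obvious rescaling $\log(K_1^{a}K_2^{b})$. Since we only need the existence of the potentials as asserted, and not a comparison with a specific KKS normalisation, this matching can be handled by invoking Picken~\cite{picken1990duistermaat}, whose theorem gives precisely this Borel--Weil description of K\"ahler potentials on generalised flag varieties. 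This is how I would close the argument.
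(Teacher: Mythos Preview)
The paper does not supply its own proof of this lemma: it is stated with attribution to Picken~\cite{picken1990duistermaat} and used as input. So there is nothing to compare against in the paper itself.

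Your argument is correct and is in fact the standard one underlying Picken's result. The computation of the two Pl\"ucker vectors from the Bruhat matrix $Z$ is right: $Ze_1=(1,z_1,z_2)$ has squared norm $K_1$, and $Ze_1\wedge Ze_2 = e_1\wedge e_2 + z_3\,e_1\wedge e_3 + (z_1z_3-z_2)\,e_2\wedge e_3$ has squared norm $K_2$, so the pullback of the sum of Fubini--Study potentials is exactly $\log(K_1K_2)$. Your remark that the general equivariant K\"ahler structure on the generic orbit depends on two positive parameters (one per fundamental weight), with the stated potential corresponding to both equal to $1$, is also the right way to frame the relation to the KKS form; this is precisely what Picken establishes in the cited reference, so invoking it there is appropriate rather than circular. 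The only cosmetic point is that the paper's displayed formula for $K_{\mathbb{CP}^n}$ has an index mismatch (the sum runs to $n-1$ while the chart has coordinates $z_1,\dots,z_n$); your version with an unspecified summation range sidesteps this typo.
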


Moreover,

\begin{lemma}[{Mu\~noz $\&$ Gonz\'alez-Prieto $\&$ Rojo \cite{munoz2020geometry}}]
In homogeneous coordinates $(1:z_1:z_2:\cdots:z_n)\in\mathbb{CP}^n$, the Hermitian metric $h= (h_{ij})_{1 \leq i, j \leq n}$ on $\mathbb{CP}^n$ takes the following form:
 $$
 (h_{ij})_{1 \leq i,j \leq n}={\frac {1}{(1+|{z} |^{2})^{2}}}
{\begin{pmatrix}1+|{z} |^{2}-|z_{1}|^{2}&-{\bar {z}}_{1}z_{2}&\cdots &-{\bar {z}}_{1}z_{n}\\-{\bar {z}}_{2}z_{1}&1+|{z} |^{2}-|z_{2}|^{2}&\cdots &-{\bar {z}}_{2}z_{n}\\\vdots &\vdots &\ddots &\vdots \\-{\bar {z}}_{n}z_{1}&-{\bar {z}}_{n}z_{2}&\cdots &1+|{z} |^{2}-|z_{n}|^{2}\end{pmatrix}}.
$$
Its determinant is given by $\det (h_{ij}) = \frac{1}{(1+|z|^2)^{n+1}}$.
\end{lemma}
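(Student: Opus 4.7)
The plan is to obtain both formulas from the K\"ahler potential $K_{\mathbb{CP}^n} = \log(1+|z|^2)$ provided by the preceding Lemma \ref{lem:potentialOnOrbits}, using the identity $h_{ij} = \partial_{z_i}\partial_{\bar z_j} K_{\mathbb{CP}^n}$, and then to reduce the determinant computation to a rank-one update of the identity.

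\textbf{Step 1: derive the matrix entries.} I would first compute the holomorphic derivative
\[
\partial_{z_i} \log\!\left(1 + \sum_{k=1}^{n} z_k \bar z_k\right) = \frac{\bar z_i}{1+|z|^2},
\]
and then differentiate once more with respect to $\bar z_j$. Since $\partial_{\bar z_j}\bar z_i = \delta_{ij}$ and $\partial_{\bar z_j}(1+|z|^2) = z_j$, the quotient rule immediately yields
\[
h_{ij} = \partial_{z_i}\partial_{\bar z_j}\log(1+|z|^2) = \frac{(1+|z|^2)\delta_{ij} - \bar z_i z_j}{(1+|z|^2)^2},
\]
which matches the claimed formula and, by the Hermitian symmetry of the potential, gives the advertised matrix shape.

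\textbf{Step 2: determinant by rank-one perturbation.} Writing $h = \frac{1}{(1+|z|^2)^2}\bigl[(1+|z|^2)\,I_n - \bar z\, z^{\top}\bigr]$, where $\bar z\, z^{\top}$ denotes the outer product with entries $\bar z_i z_j$, pulling out the prefactor contributes $(1+|z|^2)^{-2n}$. The remaining determinant has the form $\det(cI_n - u v^{\top})$ with $c = 1+|z|^2$, $u = \bar z$, $v = z$; by the matrix determinant lemma (or by recognizing that $uv^{\top}$ is rank one, hence has a single nonzero eigenvalue equal to $v^{\top}u = |z|^2$),
\[
\det\!\bigl[(1+|z|^2)I_n - \bar z\, z^{\top}\bigr] = (1+|z|^2)^{n-1}\bigl((1+|z|^2) - |z|^2\bigr) = (1+|z|^2)^{n-1}.
\]
Combining the two factors gives $\det(h_{ij}) = (1+|z|^2)^{n-1} / (1+|z|^2)^{2n} = (1+|z|^2)^{-(n+1)}$, as claimed.

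\textbf{Expected obstacles.} Neither step is genuinely hard: the derivative calculation is a direct application of the chain and quotient rules, and the determinant is a textbook rank-one update. The only mildly subtle point is bookkeeping the convention (whether the $(i,j)$-entry is $\partial_{z_i}\partial_{\bar z_j}K$ or its complex conjugate), so that the positive-definite Hermitian matrix one writes down is consistent with the Fubini-Study form $\omega = i\,\partial\bar\partial K_{\mathbb{CP}^n}$ already recorded in the preliminaries. Once that convention is fixed, the entire proof is two short computations.
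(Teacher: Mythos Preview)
Your proof is correct. The paper itself does not supply a proof of this lemma: it is quoted verbatim from Mu\~noz, Gonz\'alez-Prieto and Rojo and used as a stated fact, so there is no argument in the paper to compare against. Your derivation via $h_{ij}=\partial_{z_i}\partial_{\bar z_j}\log(1+|z|^2)$ is exactly the computation one would carry out (and is the same method the paper does use in the subsequent Lemma~\ref{hermitianmetric} for the flag manifold), and the determinant via the rank-one matrix determinant lemma is the standard and cleanest route.
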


\begin{lemma} 
\label{hermitianmetric}
Recall from Lemma \ref{lem:potentialOnOrbits} the real valued functions
\begin{align*}
	K_1  = 1 + |z_1|^2 + |z_2|^2 
	\quad \mbox{and} \quad 
	K_2  = 1 + |z_3|^2 + |z_1z_3-z_2|^2.
\end{align*}
Then the Hermitian metric $(h_{ij})_{1\leq i\leq j\leq 3}$ on $\mathbb{F}_{1,2}(\mathbb{C}^3)$ has the following matrix representation:
	$$(h_{ij})_{1 \leq i,j \leq n} =	\begin{pmatrix}
	\frac{1+|z_2|^2}{K_1^2} + \frac{|z_3|^2(1+|z_3|^2)}{K_2^2}	&	-\frac{\overline{z}_1z_2}{K_1^2}-\frac{z_3(1+|z_3|^2)}{K_2^2}	&	\frac{z_3(\overline{z}_1+\overline{z}_2z_3)}{K_2^2}\\
	-\frac{{z}_1\overline{z}_2}{K_1^2}-\frac{\overline{z}_3(1+|{z}_3|^2)}{K_2^2}	&	\frac{1+|z_1|^2}{K_1^2} + \frac{1+|z_3|^2}{K_2^2}	&-	\frac{\overline{z}_1+\overline{z}_2z_3}{K_2^2}	\\
	\frac{{\overline{z}}_3({z}_1+{z}_2\overline{z}_3)}{K_2^2}	&	-\frac{{z}_1+{z}_2\overline{z}_3}{K_2^2}	&	\frac{K_1}{K_2^2}
	\end{pmatrix}.
	$$
	Its determinant is given by $\det \bigl( (h_{ij})_{1 \leq i,j \leq n} \bigr) = \frac{2}{K_1^2K_2^2}$.
\end{lemma}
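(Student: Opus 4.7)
The plan is to apply the formula $h_{ij} = \partial_{z_i}\partial_{\bar{z}_j} K_{\mathbb{F}_{1,2}(\mathbb{C}^3)}$ from the preceding lemma, using the K\"ahler potential
\[
K_{\mathbb{F}_{1,2}(\mathbb{C}^3)} = \log(K_1 K_2) = \log K_1 + \log K_2.
\]
Since the two summands are treated independently, I would write
\[
h_{ij} = \partial_{z_i}\partial_{\bar{z}_j} \log K_1 + \partial_{z_i}\partial_{\bar{z}_j} \log K_2,
\]
and use the identity
\[
\partial_{z_i}\partial_{\bar{z}_j}\log K_a \;=\; \frac{\partial_{z_i}\partial_{\bar{z}_j} K_a}{K_a} \;-\; \frac{(\partial_{z_i} K_a)(\partial_{\bar{z}_j} K_a)}{K_a^2}
\]
for $a \in \{1,2\}$. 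The contribution from $\log K_1$ is easy: since $K_1 = 1+|z_1|^2+|z_2|^2$ involves only $z_1, z_2$, all derivatives in the $z_3$ direction vanish, and the $(2\times 2)$ block in the $(z_1,z_2)$-variables yields precisely the Fubini--Study-type entries $(1+|z_2|^2)/K_1^2$, $(1+|z_1|^2)/K_1^2$, $-\bar z_1 z_2/K_1^2$, $-z_1 \bar z_2/K_1^2$.

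For the contribution from $\log K_2$, the main obstacle is the combinatorial complexity of $K_2 = 1 + |z_3|^2 + |z_1z_3 - z_2|^2$. I would introduce the auxiliary variable $w := z_1 z_3 - z_2$, so that $K_2 = 1 + |z_3|^2 + |w|^2$, and record the elementary derivatives
\[
\partial_{z_1} w = z_3, \qquad \partial_{z_2} w = -1, \qquad \partial_{z_3} w = z_1,
\]
and their conjugates. Then both $\partial_{z_i}K_2 = \delta_{i3}\bar z_3 + \bar w \,\partial_{z_i} w$ and $\partial_{z_i}\partial_{\bar z_j}K_2 = \delta_{i3}\delta_{j3} + (\partial_{z_i} w)(\partial_{\bar z_j} \bar w)$ can be written compactly. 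Substituting into the formula above and expressing everything back in terms of $z_1,z_2,z_3$ gives each entry of the second summand. Adding the two summands produces the claimed matrix, with the prominent cancellation $K_2 - |w|^2 = 1 + |z_3|^2$ (which appears, e.g., in checking the $(1,1)$-entry).

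For the determinant, I would compute directly from the established matrix rather than via $\det(A+B)$ formulas, which would not simplify here. The key algebraic simplifications rely on the identities
\[
(1+|z_1|^2)(1+|z_3|^2+|w|^2) - |\bar z_3 + z_1 \bar w|^2 = K_1,
\]
\[
|w|^2 + 2\Re(z_2 \bar w) = -|z_2|^2 + |z_1|^2|z_3|^2 \quad \text{(up to rearrangement)},
\]
which collapse the messy cross terms between the $K_1$- and $K_2$-parts. Expanding the $3\times 3$ determinant along the third row (whose entries are comparatively simple) and repeatedly invoking such identities, the result telescopes down to $2/(K_1^2 K_2^2)$. The hardest and most error-prone step is this algebraic reduction; the rest is a careful bookkeeping of partial derivatives of $K_1, K_2$.
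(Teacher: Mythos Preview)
Your approach is correct and essentially identical to the paper's: the paper also splits the potential as $\log K_1 + \log K_2$ and computes $h_{ij} = \partial_{z_i}\partial_{\bar z_j}(\log K_1 + \log K_2)$ directly, exhibiting only the $h_{11}$ computation and declaring the rest analogous. Your introduction of the auxiliary variable $w = z_1 z_3 - z_2$ and your outline of the determinant reduction actually go a bit beyond the paper, which does not prove the determinant formula in the displayed proof at all.
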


\begin{proof} 
Recall from Lemma \ref{lem:potentialOnOrbits} the expression for the K\"ahler potential 
\begin{align*}
	\log (1+|z_1|^2+|z_2|^2)(1+|z_3|^2+|z_1z_3-z_2|^2) = \log K_1K_2 = \log K_1+\log K_2.
	\end{align*}
The entries of the matrix $(h_{ij})_{1\leq i\leq j\leq 3}$ are computed via $h_{ij} = \partial_{z_i}\partial_{\bar{z_j}} K_M(z_i,\bar{z_j})$. Exemplarily we now compute the entry
\[ h_{11} = \partial_{z_1}\partial_{\overline{z}_1} \log((1+|z_1|^2+|z_2|^2)) +  \partial_{z_1}\partial_{\overline{z}_1} \log(1+|z_3|^2+|z_1z_3-z_2|^2). \]
	The first term in this expression becomes 
	\begin{align*}
	\partial_{z_1}\frac{z_1}{1+|z_1|^2+|z_2|^2} = \frac{(1+|z_1|^2+|z_2|^2)-z_1\overline{z}_1}{(1+|z_1|^2+|z_2|^2)^2}=\frac{1+|z_2|^2}{K_1^2}
	\end{align*}
	and the second one  
	\begin{align*}
	\partial_{z_1} \frac{\overline{z}_3(z_1z_3-z_2)}{1+|z_3|^2+|z_1z_3-z_2|^2} & = \frac{|z_3|^2(1+|z_3|^2+|z_1z_3-z_2|^2)-|z_3|^2|z_1z_3-z_2|^2}{(1+|z_3|^2+|z_1z_3-z_2|^2)^2}  \\
	& = \frac{|z_3|^2(1+|z_3|^2)}{K_2^2}.
	\end{align*}
	Altogether, we obtain
	\[ h_{11} = \frac{1+|z_2|^2}{K_1^2} + \frac{|z_3|^2(1+|z_3|^2)}{K_2^2}. \]
	The other entries are computed analogously. 
\end{proof}

A straightforward computation yields:

\begin{corollary}
	The inverse matrix $((h_{ij})_{1\leq i\leq j\leq 3})^{-1}=: (h^{ij})_{1\leq i\leq j\leq 3}$ is given by: 
	\begin{align*}
	{\footnotesize\begin{pmatrix}
	K_1\left(1+|z_1|^2+\frac{K_1}{K_2}\right) &
	K_1\left(\overline{z}_1z_2+\frac{K_1}{K_2}{z}_3\right)&
	(\overline{z}_1+z_3\overline{z}_2)(\overline{z}_1z_2-z_3-z_3|z_1|^2)\\
	K_1\left(z_1\overline{z}_2+\frac{K_1}{K_2}\overline{z}_3\right)& 
	K_1\left((1+|z_2|^2)+\frac{K_1}{K_2}|z_3|^2\right)&
	(\overline{z}_1+\overline{z}_2z_3)\left((1+|z_2|^2)-z_1\overline{z}_2z_3\right)\\
	(z_1+\overline{z}_3z_2)(z_1\overline{z}_2-\overline{z}_3-z_3|z_1|^2)
	&
	({z}_1+{z}_2\overline{z}_3)\left((1+|z_2|^2)-{\overline{z}}_1{z}_2\overline{z}_3\right)&
	K_1(1+|z_3|^2)+\frac{K_2^2}{K_1}
	\end{pmatrix}}
	\end{align*}
	where $K_1  = 1 + |z_1|^2 + |z_2|^2$  and $K_2  = 1 + |z_3|^2 + |z_1z_3-z_2|^2$.
\end{corollary}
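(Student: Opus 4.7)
The statement asks for the explicit inverse of a $3\times 3$ Hermitian matrix, so the natural approach is the adjugate formula
\[ h^{-1} = \frac{1}{\det h}\,\operatorname{adj}(h), \]
where $\operatorname{adj}(h)$ is the transpose of the cofactor matrix. Lemma \ref{hermitianmetric} already provides $\det h = 2/(K_1^2 K_2^2)$, so the only thing that remains to be done is to compute the nine cofactors, i.e.\ the signed $2\times 2$ minor determinants of the matrix given there, and then multiply each by the overall factor $K_1^2 K_2^2/2$.

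Since $h$ is Hermitian, so is $h^{-1}$, and therefore it suffices to compute the three diagonal cofactors and the three cofactors above the diagonal; the remaining three entries follow by complex conjugation. Each cofactor is an explicit rational expression in $z_1,z_2,z_3$ and $\bar z_1,\bar z_2,\bar z_3$ whose denominator is a power of $K_1 K_2$. For instance, the $(3,3)$-entry of $h^{-1}$ is
\[ \frac{K_1^2 K_2^2}{2}\bigl(h_{11}h_{22} - h_{12}h_{21}\bigr), \]
and after substituting the expressions of Lemma \ref{hermitianmetric} and clearing the common factor $1/(K_1^2 K_2^2)$ in each term, the result collapses to $K_1(1+|z_3|^2) + K_2^2/K_1$, as claimed. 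The other five ``independent'' entries are handled in exactly the same fashion.

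The only real difficulty is bookkeeping: the numerous cross-terms only condense into the compact expressions displayed in the corollary after repeated use of the identity
\[ |z_1 z_3 - z_2|^2 = |z_1|^2|z_3|^2 - z_1 \bar z_2 z_3 - \bar z_1 z_2 \bar z_3 + |z_2|^2, \]
together with the defining relations $K_1 = 1 + |z_1|^2 + |z_2|^2$ and $K_2 = 1 + |z_3|^2 + |z_1 z_3 - z_2|^2$, which allow one to absorb auxiliary polynomial factors into $K_1$ or $K_2$. An equivalent and in practice cleaner route is to bypass the cofactor computation entirely and verify the claim by multiplying the proposed $h^{-1}$ directly with $h$, checking entrywise that the product equals $I_3$; this uses the same algebraic identities but organises the cancellations more transparently, and is the method I would actually carry out.
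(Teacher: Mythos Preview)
Your proposal is correct and matches the paper's own treatment: the paper simply states that ``a straightforward computation yields'' the corollary, without spelling out any details, so your description of the adjugate/cofactor route (or equivalently the direct verification that the product with $h$ gives $I_3$) is exactly the kind of computation the authors have in mind.
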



\subsection{Different symplectic structures} 
Important for us is the following result due to Kirillov, Kostant and Souriau:

\begin{theorem}
Let $G$ be a Lie group and $\mathfrak g$ its Lie algebra with dual $\mathfrak{g}^*$ and $\mu\in\mathfrak{g}^*$.
Then the coadjoint orbit $\mathcal{O}_{\mu}$ carries the canonical symplectic form
\begin{align*}
	\omega^{KKS}_\nu (\operatorname{ad}_{\xi}^*\nu,\operatorname{ad}_{\eta}^*\nu): =\langle\nu,[\xi,\eta] \rangle,
\end{align*}	
where $\xi,\eta\in\mathfrak{g}$ and $\nu\in\mathcal{O}_{\mu}$. This symplectic form is usually called the {\em Kirillov-Kostant-Souriau (KKS) symplectic form}.
\end{theorem}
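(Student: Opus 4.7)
The plan is to verify directly the defining properties of a symplectic form---well-definedness, skew-symmetry, non-degeneracy, and closedness---after first identifying the tangent space of the coadjoint orbit as the image of $\mathfrak{g}$ under $\xi\mapsto \operatorname{ad}^*_\xi \nu$. Concretely, the curve $t\mapsto \operatorname{Ad}^*_{\exp(t\xi)}\nu$ lies in $\mathcal{O}_\mu$ and its velocity at $t=0$ is $\operatorname{ad}^*_\xi\nu$; since $G$ acts transitively on $\mathcal{O}_\mu$, one obtains $T_\nu\mathcal{O}_\mu = \{\operatorname{ad}^*_\xi\nu \mid \xi\in\mathfrak{g}\}$, so the formula does specify a bilinear form on each tangent space.

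The next step is well-definedness. The key observation is that $\operatorname{ad}^*_\xi\nu=0$ if and only if $\langle\nu,[\xi,\chi]\rangle=0$ for all $\chi\in\mathfrak{g}$, which is precisely the condition that $\xi$ lies in the stabilizer Lie algebra $\mathfrak{g}_\nu$ of $\nu$. Therefore, if $\operatorname{ad}^*_\xi\nu = \operatorname{ad}^*_{\xi'}\nu$ then $\xi-\xi'\in\mathfrak{g}_\nu$, so $\langle\nu,[\xi-\xi',\eta]\rangle=0$ for every $\eta$; the same argument applies in the second slot. Skew-symmetry is then immediate from $[\xi,\eta]=-[\eta,\xi]$, and non-degeneracy follows from the same observation: if $\omega^{KKS}_\nu(\operatorname{ad}^*_\xi\nu,\operatorname{ad}^*_\eta\nu)=0$ for all $\eta\in\mathfrak{g}$, then $\langle\nu,[\xi,\eta]\rangle=0$ for all $\eta$, forcing $\operatorname{ad}^*_\xi\nu=0$, i.e.\ the first argument is the zero tangent vector.

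The main obstacle is proving closedness $d\omega^{KKS}=0$. The plan is to evaluate $d\omega^{KKS}$ on fundamental vector fields $X_\xi,X_\eta,X_\zeta$ generated by elements $\xi,\eta,\zeta\in\mathfrak{g}$, which span the tangent space at every point of $\mathcal{O}_\mu$. Using the standard identity for the exterior derivative,
\begin{align*}
d\omega^{KKS}(X_\xi,X_\eta,X_\zeta)
&= X_\xi\,\omega^{KKS}(X_\eta,X_\zeta) - X_\eta\,\omega^{KKS}(X_\xi,X_\zeta) + X_\zeta\,\omega^{KKS}(X_\xi,X_\eta)\\
&\quad - \omega^{KKS}([X_\xi,X_\eta],X_\zeta) + \omega^{KKS}([X_\xi,X_\zeta],X_\eta) - \omega^{KKS}([X_\eta,X_\zeta],X_\xi).
\end{align*}
Because $X_\xi(\nu)=\operatorname{ad}^*_\xi\nu$, the function $\omega^{KKS}(X_\eta,X_\zeta)$ evaluated at $\nu$ is $\langle\nu,[\eta,\zeta]\rangle$, so its derivative along $X_\xi$ is $\langle\operatorname{ad}^*_\xi\nu,[\eta,\zeta]\rangle = \pm\langle\nu,[\xi,[\eta,\zeta]]\rangle$, the sign fixed by the convention for $\operatorname{ad}^*$. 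Together with the identity $[X_\xi,X_\eta]=-X_{[\xi,\eta]}$ (valid for a left action), the bracket terms contribute terms of the form $\pm\langle\nu,[[\xi,\eta],\zeta]\rangle$. Summing cyclically, the Jacobi identity $[\xi,[\eta,\zeta]]+[\eta,[\zeta,\xi]]+[\zeta,[\xi,\eta]]=0$ produces the desired cancellation.

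Finally, smoothness of $\omega^{KKS}$ is automatic from the smoothness of the coadjoint action, and the identification of $T_\nu\mathcal{O}_\mu$ with $\mathfrak{g}/\mathfrak{g}_\nu$ makes the formula vary smoothly in $\nu$. I expect the only delicate point to be bookkeeping the sign conventions for $\operatorname{ad}^*$ and for the bracket of fundamental vector fields in the closedness calculation; once these are fixed consistently with the formula $\operatorname{Ad}^*_g\nu = g\nu g^{-1}$ adopted in Section \ref{sectionPreliminaries}, the Jacobi identity finishes the argument.
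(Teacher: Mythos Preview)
The paper does not prove this theorem; it merely states it as a classical result attributed to Kirillov, Kostant and Souriau and then uses it. Your proposal supplies the standard argument (tangent space identification, well-definedness via the stabilizer algebra $\mathfrak{g}_\nu$, non-degeneracy by the same mechanism, and closedness via the Jacobi identity on fundamental vector fields), which is correct and is exactly how one proves the KKS theorem in the literature; the only caveat is that you have left the closedness computation at the level of a sketch with unspecified signs, so if you want a self-contained proof you should fix the convention $\langle \operatorname{ad}^*_\xi\nu,\chi\rangle = -\langle\nu,[\xi,\chi]\rangle$ once and carry the six terms through explicitly to see the two cyclic sums cancel.
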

%

This implies that, considered as coadjoint orbit, the flag manifold $\mathbb{F}_{1,2}(\mathbb{C}^3) = \mathsf{SU}(3)/\mathbb T^2$ can be endowed with $\omega^{KKS}$ as symplectic form. Note that there is an additional way to consider $\mathbb{F}_{1,2}(\mathbb{C}^3)$ as symplectic manifold: We consider the complexification of $\mathsf{SU}(3)/\mathbb T^2$ given by $\left( \mathsf{SU}(3)/\mathbb T^2\right) ^{\mathbb{C}}=\mathsf{SL}(3, \C)/B$. Using the coordinates on the big cell given in \eqref{coordinates} and the Hermitian metric from Lemma \ref{hermitianmetric}, Picken $\&$ Duistermaat \cite{picken1990duistermaat} give the following formula for a  symplectic form on $(W^6)^\C$:
\begin{align*}
\omega^{(W^6)^\C}:=\frac{i}{2}\left(	\partial\overline{\partial}\log\left(1+\sum_{k=1}^2|z_k|^2\right) + \partial\overline{\partial}\log\left(1+|z_3|^2+|z_1z_3-z_2|^2\right)	\right).
\end{align*}
Note that, if $\omega^{\mathbb{CP}^2}$ denotes the Fubini-Study form on $\mathbb{CP}^2$ then the symplectic form $\omega^{(W^6)^\C}$ consists of the Fubini-Study form $\omega^{\mathbb{CP}^2} = \frac{i}{2}	\partial\overline{\partial}\log\left(1+\sum_{k=1}^2|z_k|^2\right)$ on $\mathbb{CP}^2$ plus the correction term
$
\tilde{\omega} = \frac{i}{2} \partial\overline{\partial}\log\left(1+|z_3|^2+|z_1z_3-z_2|^2\right),
$
i.e., $\omega^{(W^6)^\C}=\omega^{\mathbb{CP}^2}+\tilde{\omega}$.

\begin{remark}[{Picken \cite{picken1990duistermaat}, Bernatska $\&$ Holod et al.\ \cite{bernatska2008geometry}}]
  $
 \left( (W^6)^\C, \ \omega^{(W^6)^\C} \right) 
 $
 and
 $
 \left( \mathbb{F}_{1,2}(\mathbb{C}^3) = \mathsf{SU}(3)/\mathbb T^2, \ \omega^{KKS} \right)
 $
 are symplectomorphic.
\end{remark}

Summarized, we have the following types of coadjoint orbits of $\mathsf{SU}(3)$, each endowed with its natural symplectic structure:

\begin{table}[ht]
	\centering
	\setstretch{1.5}
	\begin{tabular}{|c |c| c|}
		\hline
		 {coadjoint orbit} &  {symplectic form} & {(real) dimension}  \\ 
		\hline \hline
		$\frac{\SU(3)}{\U(1)\times\U(1)}$ & Kirillov-Kostant-Souriau &  6\\ \hline
		
		$\mathbb{CP}^2$ & Fubini-Study & 4 \\ \hline
		
		point & trivial & 0 \\ 
		\hline 
	\end{tabular} 
	\setstretch{1.0}
		\caption{Coadjoint orbits of $\SU(3)$.}

\end{table}


\section{The point vortex momentum map on $\mathbb{CP}^2$ and $\mathbb{F}_{1, 2}(\C^3)$}
\label{sectionMomentumMap}
In this section, we will study the Hamiltonian action of $\mathsf{SU}(3)$ on (products of) coadjoint orbits. In the case of the degenerate orbit, the dynamics have been studied before: for example, the Hamiltonian action of $\mathsf{SU}(3)$ on $\mathbb{CP}^2\times \mathbb{CP}^2$ has been studied by Beddulli $\&$ Gori \cite{bedulli2007deformations}. Moreover, Montaldi $\&$ Shaddad \cite{montaldi2018generalized} considered a similar problem but added a copy of the projective plane. To be more precise, they considered the (diagonal) action of $\mathsf{SU}(3)$ on $\mathbb{CP}^2\times \mathbb{CP}^2\times \mathbb{CP}^2$ and the associated properties of the (weighted) momentum map. We will focus on the generic orbit, which is the six-dimensional flag manifold and construct a momentum map $\mu:\mathcal{O}^{\mathsf{SU}(3)}\to\mathfrak{su}(3)^*$ explicitly.


\subsection{The momentum map for vortex dynamics} 

Let $N \in \mathbb N$ and, for $1 \leq k \leq N$, let $\Gamma_k \in \R^{\neq 0}$ (`weight') and let $(M_k,\omega_k)$ be a symplectic manifold. Let $G$ be a Lie group that acts on each $(M_k, \omega_k)$ with momentum map $\mu_k: M_k \to \mathfrak{g}^*$. Now set $M:= \Pi_{k=1}^N M_k$ and equip it with the weighted symplectic form $\omega_M:= \sum_{k=1}^N\Gamma_k	\tau_k^*\omega_{k}$ where $\tau_k : M\to M_k$ is the projection on the $k$th factor. The diagonal action of $G$ on $M$ is given by $g.m:=(g.m_1, \dots, g. m_N)$ for $g \in G$ and $m = (m_1, \dots, m_N) \in M$ and its momentum map is given by
\begin{align*}
	\mu_M:M\to \mathfrak{g}^*, \qquad \mu_M(m_1,\dots,m_N) = \sum_{k=1}^N\Gamma_k\mu_k(m_k).
\end{align*}
We are interested in the special situation when the symplectic manifolds $M_k$ are coadjoint orbits i.e.\ $(M_k, \omega_k) = (\mathcal{O},\omega_{\mathcal{O}})$.
%
%
%
In the next subsections, we study momentum maps of vortex dynamics for the following two situations:
\begin{align*}
	\mu_ {\mathcal{O}_d^{\mathsf{SU}(3)}}: \mathcal{O}_d^{\mathsf{SU}(3)} \simeq \mathbb{CP}^2 \to \mathfrak{su}(3)^*
	\quad\text{and}\quad 	
	\mu_ {\mathcal{O}^{\mathsf{SU}(3)} }: \mathcal{O}^{\mathsf{SU}(3)} \simeq \mathbb{F}_{1,2} (\C^3)  \to \mathfrak{su}(3)^*.
\end{align*}
Recall that we identify $\mathfrak{su}(3)$ with $\mathfrak{su}(3)^*$ using the Killing form. Thus, $\mathfrak{su}(3)\cong \mathfrak{su}(3)^*$ is identified with the space of complex skew-Hermitian matrices with trace zero.

\subsection{The momentum map of the degenerate orbit $\mathcal{O}_d^{\mathsf{SU}(3)} \simeq \mathbb{CP}^2$}

In this subsection, we recall some facts from Montaldi $\&$ Shaddad \cite{Montaldi2019} concerning the momentum map of the degenerate coadjoint orbit $\mathcal{O}_d^{\mathsf{SU}(3)} \simeq \mathbb{CP}^2$ of $\mathsf{SU}(3)$.

%

\begin{theorem}[{Montaldi \& Shaddad \cite{Montaldi2019}}]
The momentum map for the Fubini-Study form on $\mathbb{CP}^2$ is given by  
\begin{align*}
	\mu:\mathbb{CP}^2\to\mathfrak{su}(3)^*,\quad [x:y:z]\mapsto  {i}\begin{pmatrix}
	|x|^2-\frac{1}{3}	&	x\overline{y}	&	x\overline{z}	\\
	\overline{x}y			&	|y|^2-\frac{1}{3}	&	y\overline{z}	\\
	\overline{x}z		&	\overline{y}z	&	|z|^2-\frac{1}{3}
	\end{pmatrix}.
	\end{align*}
	Furthermore, the map satisfies the following properties:
	\begin{enumerate}[\normalfont(i)]
		\item 
		$\mu$ is $\mathsf{SU}(3)$-equivariant for the left action, i.e. $\mu(g Z)=g \mu(Z)$ for all $g\in\SU(3)$ and all $Z\in \mathbb{CP}^2$.
		\item 
		The image of $\mu$ consists of $(3\times 3)$ Hermitian matrices with eigenvalues $-\frac{1}{3}, -\frac{1}{3}$ and $\frac{2}{3}$.
	\end{enumerate}
\end{theorem}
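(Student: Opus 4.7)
The plan is to derive the explicit matrix formula for $\mu$ via a combination of local-coordinate computation (which fixes the overall normalisation) and a conceptual formula on the covering sphere (which makes properties (i) and (ii) transparent). First, in the affine chart $\{x \neq 0\}$, I would set $w_1 := y/x$, $w_2 := z/x$ and invoke Lemma \ref{lem:potentialOnOrbits} to write the Fubini-Study form as $\omega_{FS} = i\,\partial\bar\partial K$ with $K = \log(1 + |w_1|^2 + |w_2|^2)$. For each basis element $\xi$ of $\mathfrak{su}(3)$ (say the rescaled Gell-Mann basis from Notation \ref{rescaledGel}), the fundamental vector field $X_\xi$ on $\mathbb{CP}^2$ is obtained by differentiating $t \mapsto \exp(t\xi)\cdot [1:w_1:w_2]$ at $t=0$ and re-expressing the result in affine coordinates. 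Solving $d\langle\mu,\xi\rangle = \iota_{X_\xi}\omega_{FS}$ componentwise then produces a scalar function $\langle\mu,\xi\rangle$, and reassembling these via the Killing pairing $\kappa(X,Y) = \trace(XY)$ yields the matrix-valued map.

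A more conceptual viewpoint, which I would use to organise and double-check the local calculation, comes from lifting to the unit sphere $\mathbb{S}^5 \subset \C^3$. Writing $\mathbf{v} = (x,y,z)^T$ with $|x|^2 + |y|^2 + |z|^2 = 1$ for a unit representative of $[x:y:z]$, the claimed matrix is exactly the traceless rank-one projector $\mathbf{v}\mathbf{v}^* - \tfrac{1}{3}I_3$, since $(\mathbf{v}\mathbf{v}^*)_{ij} = v_i\bar v_j$ reproduces precisely the displayed entries. This is the familiar form of the $\mathsf{SU}(n+1)$-momentum map on $\mathbb{CP}^n$ obtained by symplectic reduction from the quadratic momentum map $\mathbf{v} \mapsto \mathbf{v}\mathbf{v}^*$ of the linear $\mathsf{SU}(n+1)$-action on $\C^{n+1}$; matching the affine-chart calculation at a single point (e.g.\ $[1:0:0]$) pins down the overall constant and the sign.

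Granted the explicit formula, properties (i) and (ii) are essentially free. For equivariance, under $\mathbf{v} \mapsto g\mathbf{v}$ with $g \in \mathsf{SU}(3)$, unitarity of $g$ gives $(g\mathbf{v})(g\mathbf{v})^* = g\,\mathbf{v}\mathbf{v}^*\,g^{-1}$, while $I_3$ is $\mathrm{Ad}$-invariant; hence $\mu(g\cdot[x:y:z]) = g\,\mu([x:y:z])\,g^{-1}$, which is the coadjoint action under our Killing-form identification. For the eigenvalue statement, $\mathbf{v}\mathbf{v}^*$ is a self-adjoint rank-one orthogonal projector onto $\C\mathbf{v}$, so its eigenvalues are $1,0,0$; subtracting $\tfrac{1}{3}I_3$ shifts them to $\tfrac{2}{3},-\tfrac{1}{3},-\tfrac{1}{3}$, and tracelessness $\trace(\mu) = 1 - 1 = 0$ confirms that $\mu$ takes values in the traceless part of the Hermitian matrices.

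The step I expect to be most delicate is not any single computation but the reconciliation of conventions: the paper identifies $\mathfrak{su}(3)^*$ with $\mathfrak{su}(3)$ (anti-Hermitian traceless matrices) via the Killing form, whereas the matrix displayed in the statement is Hermitian, so an implicit factor of $i$ (absorbed into either the pairing or the sign convention for $\omega_{FS}$ relative to $i\partial\bar\partial K$) must be tracked carefully to recover exactly the stated formula. Once one generator is verified against the local-coordinate computation with fixed conventions, $\mathsf{SU}(3)$-equivariance propagates the answer across the whole orbit, so there is no genuine difficulty beyond bookkeeping.
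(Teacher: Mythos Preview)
Your proposal is correct and, in fact, more complete than the paper's own sketch. The paper only addresses property (ii), and does so by computing the characteristic polynomial of the displayed matrix directly, factoring it using the normalisation $|x|^2+|y|^2+|z|^2=1$ to read off the eigenvalues $-\tfrac{1}{3},-\tfrac{1}{3},\tfrac{2}{3}$; it does not discuss the derivation of the formula itself or the equivariance (i).

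Your route via the rank-one projector $\mu = \mathbf{v}\mathbf{v}^* - \tfrac{1}{3}I_3$ is genuinely different and buys you more: once this identification is made, both (i) and (ii) are immediate, whereas the paper's characteristic-polynomial computation is a purely local calculation that gives (ii) but says nothing about (i). Conversely, the paper's approach has the virtue of being entirely mechanical and not requiring any prior insight about symplectic reduction from $\C^3$. Your plan to derive the formula via the local chart and the Gell-Mann basis exactly mirrors what the paper later does for the flag manifold in Theorem~\ref{momentmap}, so in that sense your overall strategy is well aligned with the paper's methodology, just applied more thoroughly to this cited result than the authors chose to spell out.
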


\begin{proof}
We briefly sketch a part of the proof: the characteristic polynomial $\chi(u)$ of the matrix $\mu(x,y,z)$ is given by
	$$	
	\chi(u) = (|x|^2+|y|^2+|z|^2)\left(u^2  + \frac{2}{3}u+\frac{1}{9}\right)-u^3-u^2-\frac{u}{3}-\frac{1}{27}.
	$$
	Solving the equation $\chi(u)=0$ and using the fact that $|x|^2+|y|^2+|z|^2=1$ gives the three eigenvalues $u_1	=-\frac{1}{3}$, $u_2=-\frac{1}{3}$ and $u_3=\frac{2}{3}$.
\end{proof}

\subsection{The momentum map of the generic orbit $  \mathcal{O}^{\mathsf{SU}(3)} \simeq \mathbb{F}_{1,2} (\C^3) $}
\label{momentflag}

In order to obtain the momentum map
on the flag manifold $\mathbb{F}_{1,2} (\C^3)\simeq \mathcal{O}^{\mathsf{SU}(3)}$ we need to have the infinitesimal generators of the Lie algebra $\mathfrak{su}(3)$ at our disposal. They are provided by the following statement:

\begin{lemma}
\label{lem:infinitesimalVF}
Let $\lambda_1, \dots, \lambda_8$ be the rescaled basis from Notation \ref{rescaledGel}. Then the infinitesimal vector fields of the Lie algebra $\mathfrak{su}(3)$ on the flag manifold are given by 
\begin{align*}
	X_{\lambda_1}&=\frac{i}{2}\left((1-z_1^2)\partial_{z_1}-z_1z_2\partial_{z_2}+(z_1z_3-z_2)\partial_{z_3}\right),\\
	X_{\lambda_2}&=\frac{1}{2}\left((-z_1^2-1)\partial_{z_1}-z_1z_2\partial_{z_1}+(z_1z_3-z_2)\partial_{z_3}\right),
\\	X_{\lambda_3}&=\frac{i}{2}  \left(-2 z_1\partial_{z_1}-z_2\partial_{z_2}+z_3\partial_{z_3}\right),
\\	X_{\lambda_4}&=\frac{i}{2}\left(-z_1z_2\partial_{z_1}+(1-z_2^2)\partial_{z_2}-z_3(z_2-z_1z_3)\partial_{z_3}\right),
\\ X_{\lambda_{5}}&=\frac{1}{2}\left(-z_1z_2\partial_{z_1}-(1+z_2^2)\partial_{z_2}-z_3(z_2-z_1z_3)\partial_{z_3}\right),
\\ X_{\lambda_6}&=\frac{i}{2}\left(z_2\partial_{z_1}+z_1\partial_{z_2}+(1-z_3^2)\partial_{z_3}\right),
\\ X_{\lambda_7}&=\frac{1}{2}\left(z_2\partial_{z_1}-z_1\partial_{z_2}-(1-z_3^2)\partial_{z_3}\right),
\\ X_{\lambda_8}&=-\frac{i\sqrt{3}}{2}\left(z_2\partial_{z_2}+z_3\partial_{z_3}\right).
\end{align*} 
\end{lemma}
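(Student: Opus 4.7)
The plan is to compute each $X_{\lambda_k}$ by differentiating the left action of $\exp(t\lambda_k)$ on a point of the big cell at $t=0$. Via Lemma \ref{lem:isoForFlag} and Corollary \ref{localco}, a point of the big cell of $\mathbb{F}_{1,2}(\C^3)$ is represented by a matrix $Z \in N^-$ in the coordinates $(z_1,z_2,z_3)$, and the class $[gZ] \in \mathsf{SL}(3,\C)/B$ is determined by the flag $(V_1,V_2)$ whose line $V_1$ is spanned by the first column of $gZ$ and whose plane $V_2$ is spanned by the first two columns of $gZ$.

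Setting $(a_1,a_2,a_3)^T := g(1,z_1,z_2)^T$ and $(b_1,b_2,b_3)^T := g(0,1,z_3)^T$, I would reduce $gZ$ back into $N^-$ by right-multiplication by $B$: rescale column one by $1/a_1$, then subtract a $(b_1/a_1)$-multiple of the rescaled first column from the second and rescale the result so that its second entry becomes $1$. This yields the explicit formulas
\begin{align*}
z_1'(t) &= \frac{a_2(t)}{a_1(t)}, \qquad z_2'(t) = \frac{a_3(t)}{a_1(t)}, \\
z_3'(t) &= \frac{a_1(t)\,b_3(t) - a_3(t)\,b_1(t)}{a_1(t)\,b_2(t) - a_2(t)\,b_1(t)}.
\end{align*}

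Differentiating at $t=0$ and using the initial values $a(0)=(1,z_1,z_2)^T$, $b(0)=(0,1,z_3)^T$ together with $\dot{a}(0) = \lambda_k(1,z_1,z_2)^T$ and $\dot{b}(0) = \lambda_k(0,1,z_3)^T$, the quotient rule produces a universal formula valid for any $\xi \in \mathfrak{su}(3)$:
\begin{align*}
X_\xi = (\dot{a}_2 - z_1 \dot{a}_1)\,\partial_{z_1} + (\dot{a}_3 - z_2 \dot{a}_1)\,\partial_{z_2} + \bigl(\dot{b}_3 - z_3 \dot{b}_2 + (z_1 z_3 - z_2)\dot{b}_1\bigr)\,\partial_{z_3}.
\end{align*}

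With this in hand, it remains to plug in each Gell-Mann basis element $\lambda_k$ from Notation \ref{rescaledGel} one by one; the vectors $\lambda_k(1,z_1,z_2)^T$ and $\lambda_k(0,1,z_3)^T$ are immediate because each $\tilde\lambda_k$ has at most two nonzero entries. For instance, $\lambda_1 = \frac{i}{2}\tilde\lambda_1$ sends $(1,z_1,z_2)^T$ to $\frac{i}{2}(z_1,1,0)^T$ and $(0,1,z_3)^T$ to $\frac{i}{2}(1,0,0)^T$, and substituting these into the universal formula immediately reproduces the stated $X_{\lambda_1}$. The remaining seven cases are handled analogously. The main obstacle is the derivation of the $z_3'$ formula and the careful bookkeeping of its derivative: the ratio of two bilinear expressions in $a,b$ only collapses to the clean $(z_1 z_3 - z_2)$-coefficient once the initial values $a_1(0)=b_2(0)=1$, $b_1(0)=0$, $a_3(0)=z_2$, $b_3(0)=z_3$ are substituted. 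After that single derivation, the verification of all eight infinitesimal generators is a routine finite check.
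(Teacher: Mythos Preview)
Your argument is correct and follows the same overall strategy as the paper --- differentiate the left action of $\exp(t\lambda_k)$ on a point of the big cell after reducing back to $N^-$ via right multiplication by $B$ --- but the execution is genuinely more economical. The paper computes each matrix exponential $\exp(t\lambda_k)$ explicitly, then solves for the Borel element $\beta_k\in B$ that forces $\exp(t\lambda_k)\,\mathcal{Z}\,\beta_k\in N^-$, and only afterwards differentiates the resulting coordinate functions at $t=0$. You bypass both the exponentials and the eight individual $\beta_k$'s by first writing the reduced coordinates $z_1',z_2',z_3'$ as explicit rational functions of the column entries $a(t),b(t)$, and then differentiating once with the quotient rule to obtain a single universal formula for $X_\xi$ valid for arbitrary $\xi\in\mathfrak{su}(3)$. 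Since $\dot a(0)=\xi(1,z_1,z_2)^T$ and $\dot b(0)=\xi(0,1,z_3)^T$ are linear in $\xi$, the eight cases then reduce to reading off two or three nonzero entries of each $\tilde\lambda_k$. What your route buys is that no trigonometric identities, no $8$ separate matrix inversions, and no case-by-case reduction to $N^-$ are needed; what the paper's route buys is that the finite-$t$ flow $\exp(t\lambda_k)\cdot\mathcal{Z}$ is made fully explicit, which could be useful elsewhere but is not required for the infinitesimal statement at hand.
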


\begin{proof} 
In order to obtain the fundamental vector fields associated to $\mathfrak{su}(3)$ it is sufficient to determine the vector fields associated to the basis $\lambda_1, \dots, \lambda_8$ from Notation \ref{rescaledGel}. 
The vector fields are determined by the equation	
\begin{align*}
		X_{\lambda_k}=\frac{d}{dt} {\bigg|_{t=0}} \exp(t \lambda_k)\cdot \mathcal{Z}
\end{align*}
where $\mathcal{Z}\in \mathbb{F}_{1,2} (\C^3) \simeq \mathcal{O}^{\mathsf{SU}(3)}$ (see local coordinates expression from Corollary \ref{localco})
and $ \exp(t \lambda_k) \cdot \mathcal{Z}$ is defined as multiplication of the matrices $\exp(t \lambda_k)$ and $ \mathcal{Z}$ and corresponds to the left action of $\mathsf{SU}(3)$ on the flag manifold.

Without loss of generality, $\mathcal{Z} $ may lie in the big cell and thus is of the form $ \mathcal{Z} = \left( 
\begin{smallmatrix}
1	& 0	&	0\\
z_1&1	&	0\\
z_2	&z_3&1
\end{smallmatrix}
\right)$. When $\exp(t \lambda_k)$ acts on $ \mathcal{Z}$, the result lies not necessarily again in the big cell. But, due to the fact that the flag manifold is identified with the (complexified) homogeneous space $\mathsf{SL}(3, \C)/B$ where $B$ is the Borel subgroup of upper triangular matrices, we can always multiply (from the right) with elements from $B$ to get again an element in the big cell.
Using formula \eqref{exponential}, we compute
\begin{align*}
\exp(t\lambda_1)&= \begin{pmatrix} \cos \left(\frac{t}{2}\right) & i \sin \left(\frac{t}{2}\right) & 0 \\
i \sin \left(\frac{t}{2}\right) & \cos \left(\frac{t}{2}\right) & 0 \\
0 & 0 & 1 \\\end{pmatrix},
&& \exp(t\lambda_2) = \begin{pmatrix}
\cos \left(\frac{t}{2}\right) & \sin \left(\frac{t}{2}\right) & 0 \\
-\sin \left(\frac{t}{2}\right) & \cos \left(\frac{t}{2}\right) & 0 \\
0 & 0 & 1 \\
\end{pmatrix},
\\ 	
\exp(t\lambda_3)&= 
\begin{pmatrix} e^{\frac{i t}{2}} & 0 & 0 \\
0 & e^{-\frac{it}{2}} & 0 \\
0 & 0 & 1 \\
\end{pmatrix},
&& \exp(t\lambda_4) = \begin{pmatrix}
\cos \left(\frac{t}{2}\right) & 0 & i \sin \left(\frac{t}{2}\right) \\
0 & 1 & 0 \\
i \sin \left(\frac{t}{2}\right) & 0 & \cos \left(\frac{t}{2}\right) \\
\end{pmatrix},
\\ 
\exp(t\lambda_5)&= \begin{pmatrix}
\cos \left(\frac{t}{2}\right) & 0 & \sin \left(\frac{t}{2}\right) \\
0 & 1 & 0 \\
-\sin \left(\frac{t}{2}\right) & 0 & \cos \left(\frac{t}{2}\right) \\
\end{pmatrix},
&& \exp(t\lambda_6)=
\begin{pmatrix} 1 & 0 & 0 \\
0 & \cos \left(\frac{t}{2}\right) & i \sin \left(\frac{t}{2}\right) \\
0 & i \sin \left(\frac{t}{2}\right) & \cos \left(\frac{t}{2}\right) \\
\end{pmatrix},
\\ 
\exp(t\lambda_7)&= \begin{pmatrix} 1 & 0 & 0 \\
0 & \cos \left(\frac{t}{2}\right) &  \sin \left(\frac{t}{2}\right) \\
0 & - \sin \left(\frac{t}{2}\right) & \cos \left(\frac{t}{2}\right) \\
\end{pmatrix},
&& \exp(t\lambda_8) = 
\begin{pmatrix}
e^{\frac{i t}{2 \sqrt{3}}} & 0 & 0 \\
0 & e^{\frac{i t}{2 \sqrt{3}}} & 0 \\
0 & 0 & e^{-\frac{i t}{\sqrt{3}}} \\
\end{pmatrix}.
\end{align*}
Now we need to compute $ \exp(t \lambda_k) \cdot \mathcal{Z}$. Note that, as mentioned above, the result may not lie in the big cell. Thus we need to multiply in addition from the right with an element $\mathbf{b} = \left(\begin{smallmatrix}
				b_1	& b_2	& b_3
				\\0	& b_4& b_5
				\\0&	0	& b_6
			\end{smallmatrix} \right) \in B$, i.e., 
\begin{align*}
	\exp(\lambda_kt)\begin{pmatrix}
			1	&	0	&	0
		\\	z_1	&	1	&	0
		\\	z_2	&	z_3	&	1
		\end{pmatrix} 
			\begin{pmatrix}
				b_1	& b_2	& b_3
				\\0	& b_4& b_5
				\\0&	0	& b_6
			\end{pmatrix}
\end{align*}
in order to obtain as element of the big cell
\begin{align*}
A_k:=\begin{pmatrix}
				1	&	0	&	0\\
				f_{1, k}({z}_1,t)&1&0\\
				f_{2, k}({z}_2,t)&f_{3, k}({z}_3,t)&1
			\end{pmatrix}
\end{align*}
for some functions $f_{j, k}(z_j,t)$ with $j \in \{1, 2, 3\}$ and $k \in \{1, \dots, 8\}$ depending on the complex variables $z_j$ and the real variable $t$.
Now, for $k \in \{1, \dots, 8\}$, we solve $\exp(\lambda_kt) \mathcal{Z} \mathbf{b} = A_k$ for $\mathbf{b} \in B$. The solutions are denoted by $\beta_k\in B$ and are given as follows:
\begin{align*}
\beta_1&=\left(
\begin{array}{ccc}
\frac{1}{\cos \left(\frac{t}{2}\right)+i z_1 \sin \left(\frac{t}{2}\right)} & -i \sin \left(\frac{t}{2}\right) & 0 \\
0 & \cos \left(\frac{t}{2}\right)+i z_1 \sin \left(\frac{t}{2}\right) & 0 \\
0 & 0 & 1 \\
\end{array}
\right),
\\ \beta_2&=\left(
\begin{array}{ccc}
\frac{1}{z_1 \sin \left(\frac{t}{2}\right)+\cos \left(\frac{t}{2}\right)} & -\sin \left(\frac{t}{2}\right) & 0 \\
0 & z_1 \sin \left(\frac{t}{2}\right)+\cos \left(\frac{t}{2}\right) & 0 \\
0 & 0 & 1 \\
\end{array}
\right),
\\ \beta_3&= \left(
\begin{array}{ccc}
 e^{\frac{i t}{2}} & 0 & 0 \\
 0 & e^{-\frac{it}{2}} & 0 \\
 0 & 0 & 1 \\
\end{array}
\right),\\ \beta_4&=\left(
\begin{array}{ccc}
\frac{1}{\cos \left(\frac{t}{2}\right)+i z_2 \sin \left(\frac{t}{2}\right)} & \frac{z_3 \sin \left(\frac{t}{2}\right)}{-z_2 \sin \left(\frac{t}{2}\right)+z_1 z_3 \sin \left(\frac{t}{2}\right)+i \cos \left(\frac{t}{2}\right)} & -i \sin \left(\frac{t}{2}\right) \\
0 & 1-\frac{z_1 \left(z_3 \sin \left(\frac{t}{2}\right)\right)}{-z_2 \sin \left(\frac{t}{2}\right)+z_1 z_3 \sin \left(\frac{t}{2}\right)+i \cos \left(\frac{t}{2}\right)} & iz_1 \sin \left(\frac{t}{2}\right) \\
0 & 0 & \cos \left(\frac{t}{2}\right)+i \left(z_2-z_1 z_3\right) \sin \left(\frac{t}{2}\right) \\
\end{array}
\right),
\\\beta_5&=\left(
\begin{array}{ccc}
\frac{1}{z_2 \sin \left(\frac{t}{2}\right)+\cos \left(\frac{t}{2}\right)} & -\frac{z_3 \sin \left(\frac{t}{2}\right)}{z_2 \sin \left(\frac{t}{2}\right)-z_1 z_3 \sin \left(\frac{t}{2}\right)+\cos \left(\frac{t}{2}\right)} & -\sin \left(\frac{t}{2}\right) \\
0 & \frac{z_2 \sin \left(\frac{t}{2}\right)+\cos \left(\frac{t}{2}\right)}{z_2 \sin \left(\frac{t}{2}\right)-z_1 z_3 \sin \left(\frac{t}{2}\right)+\cos \left(\frac{t}{2}\right)} & z_1 \sin \left(\frac{t}{2}\right) \\
0 & 0 & z_2 \sin \left(\frac{t}{2}\right)-z_1 z_3 \sin \left(\frac{t}{2}\right)+\cos \left(\frac{t}{2}\right) \\
\end{array}
\right),
\\ \beta_6&=\left(
\begin{array}{ccc}
1 & 0 & 0 \\
0 & \frac{1}{\cos \left(\frac{t}{2}\right)+i z_3 \sin \left(\frac{t}{2}\right)} & -\frac{i}{\sin \left(\frac{t}{2}\right)+\cos \left(\frac{t}{2}\right) \cot \left(\frac{t}{2}\right)} \\
0 & 0 & \cos \left(\frac{t}{2}\right)+i z_3 \sin \left(\frac{t}{2}\right) \\
\end{array}
\right),
\\
\beta_7&=\left(
\begin{array}{ccc}
1 & 0 & 0 \\
0 & \frac{1}{z_3 \sin \left(\frac{t}{2}\right)+\cos \left(\frac{t}{2}\right)} & -\frac{1}{\sin \left(\frac{t}{2}\right)+\cos \left(\frac{t}{2}\right) \cot \left(\frac{t}{2}\right)} \\
0 & 0 & z_3 \sin \left(\frac{t}{2}\right)+\cos \left(\frac{t}{2}\right) \\
\end{array}
\right),
\\ \beta_8&=\left(
\begin{array}{ccc}
e^{-\frac{i t}{2 \sqrt{3}}} & 0 & 0 \\
0 & e^{-\frac{i t}{2 \sqrt{3}}} & 0 \\
0 & 0 & e^{\frac{i t}{\sqrt{3}}} \\
\end{array}
\right).
\end{align*}
In order to obtain the vector fields, we must compute the derivatives of $A_k$ with respect to $t$ and evaluate in $t=0$. This reduces to determining the derivatives of the coordinate functions 
$$\frac{d}{dt}f_{j, k}({z}_j,t)$$ in $t=0$ for all $j\in \{1,2,3\}$ and $k \in \{1, \dots, 8\}$. The corresponding vector fields are then, for $k \in \{1, \dots, 8\}$, given by 
\begin{align*}
X_{\lambda_k}=
		\left(\frac{d}{dt}\bigg|_{t=0}f_{1, k}({z}_1,t)\right)\partial_{z_1} +	
		\left(\frac{d}{dt}\bigg|_{t=0}f_{2, k}({z}_2,t)\right)\partial_{z_2}+	
		\left(\frac{d}{dt}\bigg|_{t=0}f_{3, k}({z}_3,t)\right)\partial_{z_3}
\end{align*}
which yields the claim.
\end{proof}

\noindent 
Now we compute the explicit formula for the momentum map on $  \mathcal{O}^{\mathsf{SU}(3)} \simeq \mathbb{F}_{1,2} (\C^3) \simeq \mathsf{SL}(3,\C)/B$ associated with vortex dynamics.

\begin{theorem}
\label{momentmap}
Let $K_1  = 1 + |z_1|^2 + |z_2|^2$ and $K_2  = 1 + |z_3|^2 + |z_1z_3-z_2|^2$.
The momentum map for the left action of $\mathsf{SU}(3)$ on the generic coadjoint orbit $\mathsf{SL}(3,\C)/B$ is given by
	\begin{align*}
		\mu: \mathsf{SL}(3,\C)/ B \to \mathfrak{su}(3)^*,  \qquad
		\begin{pmatrix}
			1	&	0	&	0
		\\	z_1	&	1	&	0
		\\	z_2	&	z_3	&	1
		\end{pmatrix}  
		\mapsto {i}(\mu_{ij})_{1 \leq i, j \leq 3}
	\end{align*}
	where $(\mu_{ij})_{1 \leq i, j \leq 3}$ is the traceless, anti-Hermitian matrix with entries
	\begin{align*}
	\mu_{11}&=\frac{1}{3} \left(\frac{x_3^2+y_3^2+2}{K_2}-\frac{x_2^2+y_2^2-1}{K_1}\right),
	\\
	\mu_{22}&=\frac{1}{3} \left(-\frac{2 x_2^2+2 y_2^2+1}{K_1}-\frac{x_3^2+y_3^2-1}{K_2}\right),
	\\ 
	\mu_{33}&=-(\mu_{11}+\mu_{22}),
	\\
	\mu_{12}&=\frac{\left(i y_1-x_1\right) \left(x_3-i y_3\right)-i y_2+x_2}{K_2}-\frac{x_1-i y_1}{K_1},
	\\
	\mu_{13}&=\frac{\left(i y_1-x_1\right) \left(x_3-i y_3\right)-i y_2+x_2}{K_2}-\frac{x_1-i y_1}{K_1},
	\\
	\mu_{23}&=\frac{i y_3+x_3}{K_2}-\frac{\left(x_1+i y_1\right) \left(x_2-i y_2\right)}{K_1}.
\end{align*}
The remaining entries are determined by the fact that the matrix is anti-Hermitian.
\end{theorem}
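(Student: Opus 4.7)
The plan is to exploit the K\"ahler structure of the flag manifold. Recall from Lemma \ref{lem:potentialOnOrbits} that, in the Bruhat coordinates $(z_1,z_2,z_3)$ of the big cell, the symplectic form on $\mathbb{F}_{1,2}(\C^3)$ arises from the K\"ahler potential $K = \log(K_1 K_2)$ via $\omega = i\partial\bar\partial K$, and from Lemma \ref{lem:infinitesimalVF} that the holomorphic part $Z_k := X_{\lambda_k}$ of the infinitesimal generator of each Gell-Mann basis element $\lambda_k$ is given explicitly. The strategy is to integrate the momentum-map equation $d\mu_k = \iota_{Z_k+\overline{Z_k}}\omega$ for each $k=1,\ldots,8$, and then reassemble the resulting eight real scalar functions $\mu_k$ into an element of $\mathfrak{su}(3)^*\cong\mathfrak{su}(3)$ via the Killing-form duality.

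A direct calculation using $\omega = i\partial\bar\partial K$ and the holomorphy of $Z_k$ shows
\[
\iota_{Z_k+\overline{Z_k}}\omega \;=\; i\bar\partial\bigl(Z_k(K)\bigr) \;-\; i\partial\bigl(\overline{Z_k(K)}\bigr),
\]
so that any real-valued primitive of this closed 1-form gives the desired $\mu_k$. When the action preserves $K$ (which happens along the Cartan directions $\lambda_3$ and $\lambda_8$, since one checks that $Z_3(K)$ and $Z_8(K)$ are purely imaginary), the explicit formula
\[
\mu_k \;=\; -\Im\bigl(Z_k(K)\bigr)
\]
works verbatim. For the six remaining basis elements, a formula of the same type is valid up to a pluriharmonic correction whose existence is guaranteed by the Hamiltonian nature of the $\mathsf{SU}(3)$-action; these corrections, together with the integration constants, are pinned down by demanding that the resulting matrix in $\mathfrak{su}(3)^*$ be traceless and $\mathsf{SU}(3)$-equivariant (equivalently, by evaluating at the base point $(z_1,z_2,z_3)=(0,0,0)$ and matching the known location of the orbit in the positive Weyl chamber $\mathfrak{t}^*_+$).

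Concretely, I would first tabulate the partial derivatives
\[
\partial_{z_1}K_2 = z_3(\bar z_1\bar z_3-\bar z_2),\qquad \partial_{z_2}K_2 = \bar z_2-\bar z_1\bar z_3,\qquad \partial_{z_3}K_2 = \bar z_3(1+|z_1|^2)-z_1\bar z_2,
\]
together with the analogous formulas for $\log K_1$, and apply them to the eight vector fields $Z_k$ of Lemma \ref{lem:infinitesimalVF}. The key algebraic identity
\[
|z_1z_3-z_2|^2 \;=\; |z_1|^2|z_3|^2-2\Re(z_1z_3\bar z_2)+|z_2|^2
\]
is used repeatedly to simplify the resulting numerators into linear combinations of $K_1$, $K_2$ and $1$, yielding eight real functions $\mu_1,\ldots,\mu_8$ on the big cell.

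Finally, I would assemble them into the matrix $\mu$: since $\kappa(\lambda_j,\lambda_k)=-\tfrac12\delta_{jk}$ in the Gell-Mann basis (cf.\ Notation \ref{rescaledGel}), the Killing-dual basis is $\lambda_k^\ast=-2\lambda_k$, so $\mu = -i\sum_{k=1}^8\mu_k\tilde\lambda_k$. Substituting $z_j = x_j+iy_j$ splits the three diagonal entries (linear combinations of $\mu_3$ and $\mu_8$ coming from $\tilde\lambda_3$ and $\tilde\lambda_8$) from the three upper off-diagonal entries (complex combinations of the pairs $(\mu_1,\mu_2)$, $(\mu_4,\mu_5)$, $(\mu_6,\mu_7)$ coming from the off-diagonal Gell-Mann matrices). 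The main obstacle is not conceptual but computational bookkeeping: bringing the off-diagonal entries $\mu_{12},\mu_{13},\mu_{23}$ into the compact form stated in the theorem requires careful tracking of the cubic monomials in $z_1,z_2,z_3$ that arise from products of the vector-field coefficients with $\partial_{z_j}\log K_2$, and repeated application of the identity for $|z_1z_3-z_2|^2$ to collapse these into the final rational expressions with denominators $K_1$ and $K_2$.
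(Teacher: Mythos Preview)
Your route via the K\"ahler potential is genuinely different from the paper's. The paper works entirely in real coordinates: it writes $\omega$ as a $6\times 6$ real matrix built from the explicit Hermitian metric of Lemma~\ref{hermitianmetric}, contracts against each $X_{\lambda_k}$ to obtain eight exact $1$-forms on $\R^6$, and then integrates these one variable at a time (the method illustrated in Example~\ref{ex:exactOneForm}), pairing the equations for $(\lambda_1,\lambda_2)$, $(\lambda_4,\lambda_5)$, $(\lambda_6,\lambda_7)$ to extract the off-diagonal $\mu_{ij}$ and using $(\lambda_3,\lambda_8)$ together with tracelessness for the diagonal entries. No K\"ahler shortcut is invoked; the trace pairings $\langle\mu,\lambda_k\rangle$ are written out and the resulting linear system in the $\mu_{ij}$ is solved directly.

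Your proposal, however, has a genuine gap in the non-Cartan case. The identity $\mu_k=-\Im\bigl(Z_k(K)\bigr)$ is exact only when $Z_k(K)$ is purely imaginary. For $k\in\{1,2,4,5,6,7\}$ one finds, for instance, $Z_1(\log K_1)=\tfrac{i}{2}\bigl(\tfrac{\bar z_1+z_1}{K_1}-z_1\bigr)$, so $\Re\bigl(Z_k(K)\bigr)$ is a \emph{non-constant} pluriharmonic function and the needed correction is a function, not a constant. The precise statement is $\mu_k=-\Im\bigl(Z_k(K)-h_k\bigr)+c_k$, where $h_k$ is the holomorphic function with $\Re(h_k)=\Re\bigl(Z_k(K)\bigr)$; neither $\SU(3)$-equivariance nor evaluation at $(0,0,0)$ determines $h_k$ for you --- it has to be computed. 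In this particular example $h_k$ turns out to be exactly the polynomial tail of $Z_k(K)$ (the terms with no $K_1$ or $K_2$ in the denominator), so the correction cancels that tail and leaves only the rational pieces appearing in the theorem. Once you make this step explicit your argument does go through, and with less brute-force integration than the paper's; but as written the six off-diagonal Hamiltonians are not yet determined by what you have said.
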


\noindent 

\begin{proof}
By definition of the moment map, we must have 
\begin{align*}
	d\langle \mu,\lambda_k \rangle  = \iota_{X_{\lambda_k}}\omega,
\end{align*}
for all $\lambda_k\in\mathfrak{g}$ and induced vector fields $X_{\lambda_k}$ from Lemma \ref{lem:infinitesimalVF}. 
Moreover, recall from \eqref{ex:explicitTrace} that the dual pairing is given by the trace. Therefore we have
    $$\langle \mu,\lambda_k\rangle: \mathsf{SL}(3,\C)/B \to \R,\quad x\mapsto \langle \mu(x),\lambda_k \rangle = \trace(\mu(x)\lambda_k). $$ 

\noindent 
The dual pairing explicitly becomes
\begin{align*}
	&\trace\left(\left(
	\begin{array}{ccc}
	\mu_{11} & \mu_{12} & \mu_{13} \\
	\mu_{21} & \mu_{22} & \mu_{23} \\
	\mu_{31} & \mu_{32} & \mu_{33} \\
	\end{array}
	\right)\left(
\begin{array}{ccc}
 0 & \frac{i}{2} & 0 \\
 \frac{i}{2} & 0 & 0 \\
 0 & 0 & 0 \\
\end{array}
\right)\right)&&=\frac{i \mu_{12}}{2}+\frac{i \mu_{21}}{2},
\\ 
&\trace\left(\left(
\begin{array}{ccc}
\mu_{11} & \mu_{12} & \mu_{13} \\
\mu_{21} & \mu_{22} & \mu_{23} \\
\mu_{31} & \mu_{32} & \mu_{33} \\
\end{array}
\right)\left(
\begin{array}{ccc}
 0 & \frac{1}{2} & 0 \\
 -\frac{1}{2} & 0 & 0 \\
 0 & 0 & 0 \\
\end{array}
\right)\right)&&=\frac{\mu_{21}}{2}-\frac{\mu_{12}}{2},
\\ 
&\trace\left(\left(
\begin{array}{ccc}
\mu_{11} & \mu_{12} & \mu_{13} \\
\mu_{21} & \mu_{22} & \mu_{23} \\
\mu_{31} & \mu_{32} & \mu_{33} \\
\end{array}
\right)\left(
\begin{array}{ccc}
 \frac{i}{2} & 0 & 0 \\
 0 & -\frac{i}{2} & 0 \\
 0 & 0 & 0 \\
\end{array}
\right)\right)&&=\frac{i \mu_{11}}{2}-\frac{i \mu_{22}}{2},
\\ 
&\trace\left(\left(
\begin{array}{ccc}
\mu_{11} & \mu_{12} & \mu_{13} \\
\mu_{21} & \mu_{22} & \mu_{23} \\
\mu_{31} & \mu_{32} & \mu_{33} \\
\end{array}
\right)\left(
\begin{array}{ccc}
 0 & 0 & \frac{i}{2} \\
 0 & 0 & 0 \\
 \frac{i}{2} & 0 & 0 \\
\end{array}
\right)\right)&&=\frac{i \mu_{13}}{2}+\frac{i \mu_{31}}{2},
\\ 
&\trace\left(\left(
\begin{array}{ccc}
\mu_{11} & \mu_{12} & \mu_{13} \\
\mu_{21} & \mu_{22} & \mu_{23} \\
\mu_{31} & \mu_{32} & \mu_{33} \\
\end{array}
\right)\left(
\begin{array}{ccc}
 0 & 0 & \frac{1}{2} \\
 0 & 0 & 0 \\
 -\frac{1}{2} & 0 & 0 \\
\end{array}
\right)\right)&&=\frac{\mu_{31}}{2}-\frac{\mu_{13}}{2},
\\ 
&\trace\left(\left(
\begin{array}{ccc}
\mu_{11} & \mu_{12} & \mu_{13} \\
\mu_{21} & \mu_{22} & \mu_{23} \\
\mu_{31} & \mu_{32} & \mu_{33} \\
\end{array}
\right)\left(
\begin{array}{ccc}
 0 & 0 & 0 \\
 0 & 0 & \frac{i}{2} \\
 0 & \frac{i}{2} & 0 \\
\end{array}
\right)\right)&&=\frac{i \mu_{23}}{2}+\frac{i \mu_{32}}{2},
\\ 
&\trace\left(\left(
\begin{array}{ccc}
\mu_{11} & \mu_{12} & \mu_{13} \\
\mu_{21} & \mu_{22} & \mu_{23} \\
\mu_{31} & \mu_{32} & \mu_{33} \\
\end{array}
\right)\left(
\begin{array}{ccc}
 0 & 0 & 0 \\
 0 & 0 & \frac{1}{2} \\
 0 & -\frac{1}{2} & 0 \\
\end{array}
\right)\right)&&=\frac{\mu_{32}}{2}-\frac{\mu_{23}}{2},
\\ 
&\trace\left(\left(
\begin{array}{ccc}
\mu_{11} & \mu_{12} & \mu_{13} \\
\mu_{21} & \mu_{22} & \mu_{23} \\
\mu_{31} & \mu_{32} & \mu_{33} \\
\end{array}
\right)\left(
\begin{array}{ccc}
 \frac{i}{2 \sqrt{3}} & 0 & 0 \\
 0 & \frac{i}{2 \sqrt{3}} & 0 \\
 0 & 0 & -\frac{i}{\sqrt{3}} \\
\end{array}
\right)\right)&& =\frac{i \left(\mu_{11}+\mu_{22}-2 \mu_{33}\right)}{2 \sqrt{3}}.
\end{align*}
In Lemma \ref{hermitianmetric}, we obtained the Hermitian metric $h=(h_{kl})_{1 \leq k,l \leq n}$ on the flag manifold. Moreover, the (real) symplectic form is given by $\omega=\frac{i}{2}\sum_{k,l=1}^nh_{kl}dz_k\wedge d\overline{z}_l$. In terms of real coordinates $(x_1,x_2,x_3,y_1,y_2,y_3)\in\R^6$ the matrix representing the symplectic form is given by 
\begin{align*}
	\omega:=\begin{pmatrix}
		\Im(h)	& -\Re(h)
		\\ 
		\Re(h) & \Im(h)
	\end{pmatrix}
\end{align*}
with 
\begin{align*}
	\Im(h) =\begin{pmatrix}
 0 & \frac{x_2 y_1-x_1 y_2}{K_1^2}-\frac{y_3 \left(x_3^2+y_3^2+1\right)}{K_2^2} & \frac{-\left(x_3 \left(y_1-2 x_2 y_3\right)\right)-x_3^2 y_2+y_3 \left(x_1+y_2 y_3\right)}{K_2^2} \\
 \frac{x_1 y_2-x_2 y_1}{K_1^2}+\frac{y_3 \left(x_3^2+y_3^2+1\right)}{K_2^2} & 0 & \frac{x_3 y_2-x_2 y_3+y_1}{K_2^2} \\
 \frac{x_3^2 y_2+x_3 \left(y_1-2 x_2 y_3\right)-y_3 \left(x_1+y_2 y_3\right)}{K_2^2} & -\frac{x_3 y_2-x_2 y_3+y_1}{K_2^2} & 0 \\
\end{pmatrix}
\end{align*}
and \begin{align*}
	\Re(h)= \begin{pmatrix}
 \frac{x_2^2+y_2^2+1}{K_1^2}+\frac{x_3^2 \left(2 y_3^2+1\right)+x_3^4+y_3^4+y_3^2}{K_2^2} & -\frac{x_1 x_2+y_1 y_2}{K_1^2}-\frac{x_3 \left(x_3^2+y_3^2+1\right)}{K_2^2} & \frac{y_3 \left(2 x_3 y_2+y_1\right)+x_2 \left(x_3^2-y_3^2\right)+x_1 x_3}{K_2^2} \\
 -\frac{x_1 x_2+y_1 y_2}{K_1^2}-\frac{x_3 \left(x_3^2+y_3^2+1\right)}{K_2^2} & \frac{x_1^2+y_1^2+1}{K_1^2}+\frac{x_3^2+y_3^2+1}{K_2^2} & -\frac{x_1+x_2 x_3+y_2 y_3}{K_2^2} \\
 \frac{y_3 \left(2 x_3 y_2+y_1\right)+x_2 \left(x_3^2-y_3^2\right)+x_1 x_3}{K_2^2} & -\frac{x_1+x_2 x_3+y_2 y_3}{K_2^2} & \frac{K_1}{K_2^2} \\
	\end{pmatrix}.
\end{align*}
Evaluating $d\langle \mu,\lambda_k \rangle  = \iota_{X_{\lambda_k}}\omega$ using the matrix representing $\omega$, we obtain the equations
\begin{align}
\label{systemmoment}
	\begin{cases}
		d\left(\frac{i \mu_{12}}{2}+\frac{i \mu_{21}}{2}\right)&=\iota_{X_{\lambda_1}}\omega,
\\ 	
d\left(\frac{\mu_{21}}{2}-\frac{\mu_{12}}{2}\right)&=\iota_{X_{\lambda_2}}\omega,
\\ 		
d\left(\frac{i \mu_{13}}{2}+\frac{i \mu_{31}}{2}
\right)&=\iota_{X_{\lambda_4}}\omega,
\\ 	
d\left(\frac{\mu_{31}}{2}-\frac{\mu_{13}}{2}\right)&=\iota_{X_{\lambda_5}}\omega,
\\		
d\left(\frac{i \mu_{23}}{2}+\frac{i \mu_{32}}{2} \right)&=\iota_{X_{\lambda_6}}\omega,
\\ 	
d\left(\frac{\mu_{32}}{2}-\frac{\mu_{23}}{2}\right)&=\iota_{X_{\lambda_7}}\omega,
\\		
d\left(\frac{i \mu_{11}}{2}-\frac{i \mu_{22}}{2}\right)&=\iota_{X_{\lambda_3}}\omega,
\\ 	
d\left(\frac{i \left(\mu_{11}+\mu_{22}-2 \mu_{33}\right)}{2 \sqrt{3}}
\right)&=\iota_{X_{\lambda_8}}\omega
	\end{cases}
\end{align}
where we need to solve for the components $\mu_{ij}$ of the momentum map. The 1-forms on the left hand side are exact and therefore of the general form $dF = \sum_{k=1}^3 \frac{\partial F}{\partial x_k}dx_k + \sum_{k=1}^3 \frac{\partial F}{\partial y_k}dy_k $. The 1-forms on the right hand side are contractions and of the general form $\sum_{k=1}^3 G_k dx_k + \sum_{k=1}^3 G_{3+k}dy_k$.
Thus we must solve $\frac{\partial F}{\partial x_k} =  G_k$ and $ \frac{\partial F}{\partial y_k} = G_{k+3}$ for all $k \in \{1, 2, 3\}$. 
We now proceed as in Example \ref{ex:exactOneForm}. Considering the coordinate $x_1$, we find
\begin{align*}
	F(x_1,x_2,x_3,y_1,y_2,y_3)=\int G_1(x_1,  x_2,x_3,y_1,y_2,y_3) dx_1 + C(x_2,x_3,y_1,y_2,y_3)
\end{align*}
and obtain therefore an expression for $F$. Using $\frac{\partial F}{\partial x_k} = G_2$ we obtain
\begin{align*}
	G_2 (x_1,x_2,x_3,y_1,y_2,y_3) & = \frac{\partial F}{\partial x_2}(x_1,x_2,x_3,y_1,y_2,y_3) \\
	& =\frac{\partial}{\partial x_2} \int G_1(x_1,  x_2,x_3,y_1,y_2,y_3) dx_1 +\frac{\partial}{\partial x_2} C(x_2,x_3,y_1,y_2,y_3)
\end{align*}
and therefore 
$$
\frac{\partial}{\partial x_2} C = G_2 - \frac{\partial}{\partial x_2} \int G_1
$$
which yields 
$$
C =  \int \left( G_2 - \frac{\partial}{\partial x_2} \int G_1 \right) dx_2 + C'(x_3, y_1, y_2, y_3).
$$
Iterating this procedure, we determine $F$ in terms of $G_1, \dots, G_6$. Therefore, if $G_1, \dots, G_6$ are explicitly given, we can find an explicit formula for $F$. 

Now we will apply this procedure to the systems of equations given in \eqref{systemmoment}. 
We start with the coupled system
\begin{align*}
\begin{cases}   
 d\left(\frac{i \mu_{12}}{2}+\frac{i \mu_{21}}{2}
\right) & =\iota_{X_{\lambda_1}}\Omega,
\\ 	
 d\left(\frac{\mu_{21}}{2}-\frac{\mu_{12}}{2}\right) & =\iota_{X_{\lambda_2}}\Omega.
\end{cases}
\end{align*}
By integration we obtain the following expressions. Note that, in our situation, the function $C$ from above is constant and may be chosen to be zero.
\begin{align*}
\frac{i}{2}\left(\mu_{12}+\mu_{21}\right)&=\frac{1}{2} \left(-\frac{x_1 \left(x_3^2+y_3^2\right)-x_2 x_3-y_2 y_3}{K_2}-\frac{x_1}{K_1}\right)=: \alpha,
\\ \frac{1}{2}\left(\mu_{21}-\mu_{12}\right)&=\frac{1}{2} \left(\frac{x_3 y_2-x_3^2 y_1-y_3 \left(x_2+y_1 y_3\right)}{K_2}-\frac{y_1}{K_1}\right)=: \beta.
\end{align*}
We now can solve for $\mu_{21} $ via 
\begin{align*}
i\mu_{21}=\alpha+i\beta&=\frac{1}{2} \left(\frac{-\left(x_1+i y_1\right) \left(x_3+i y_3\right)+x_2+i y_2}{K_2}-\frac{x_1+i y_1}{K_1}\right)
\\&=\frac{1}{2} \left(-\frac{z_1}{K_1}+\frac{-z_1z_3+z_2}{K_2}\right)
\end{align*}
and find 
\begin{align}
	{\mu_{21}=-\frac{i}{2} \left(-\frac{z_1}{K_1}+\frac{-z_1z_3+z_2}{K_2}\right)}.
\end{align} 
The second pair of coupled equations is
\begin{align*}
\begin{cases}
		d\left(\frac{i \mu_{13}}{2}+\frac{i \mu_{31}}{2}
\right) & = \iota_{X_{\lambda_4}}\Omega,
\\ 	d\left(\frac{\mu_{31}}{2}-\frac{\mu_{13}}{2}\right) & =\iota_{X_{\lambda_5}}\Omega
\end{cases}
\end{align*}
which can be integrated as\begin{align*}
	\frac{i \mu_{13}}{2}+\frac{i \mu_{31}}{2}&=\frac{1}{2} \left(\frac{-x_2+x_1 x_3-y_1 y_3}{K_2}-\frac{x_2}{K_1}\right) =: \gamma, 
	\\ \frac{\mu_{31}}{2}-\frac{\mu_{13}}{2}&=\frac{1}{2} \left(\frac{x_3 y_1+x_1 y_3-y_2}{K_2}-\frac{y_2}{K_1}\right) =: \delta
\end{align*}
We obtain
\begin{align*}
	i\mu_{31}=\gamma+i\delta=-\frac{x_2+{\color{black}i}y_2}{2 K_1}-\frac{-x_3 y_1-x_1 \left(x_3+y_3\right)+x_2+y_2+y_1 y_3}{2 K_2}
\end{align*}
and finally
\begin{align*}
	\mu_{31}=\frac{-i}{2}\left(-\frac{x_2+i y_2}{K_1}-\frac{-\left(x_1+i y_1\right) \left(x_3+i y_3\right)+x_2+i y_2}{K_2}\right).
\end{align*}
The next pair of equations is
\begin{align*}
\begin{cases}
		d\left(\frac{i \mu_{23}}{2}+\frac{i \mu_{32}}{2}
\right) & =\iota_{X_{\lambda_6}}\Omega, 
\\ 	d\left(\frac{\mu_{32}}{2}-\frac{\mu_{23}}{2}\right) & =\iota_{X_{\lambda_7}}\Omega.
\end{cases}
\end{align*}
We compute
\begin{align*}
	\frac{i \mu_{23}}{2}+\frac{i \mu_{32}}{2}&=\frac{1}{2} \left(\frac{x_3}{K_2}-\frac{x_1 x_2+y_1 y_2}{K_1}\right)=: \zeta, 
\\ \frac{\mu_{32}}{2}-\frac{\mu_{23}}{2}&=\frac{1}{2} \left(\frac{x_2 y_1-x_1 y_2}{K_1}-\frac{y_3}{K_2}\right)=: \eta
\end{align*}
and obtain $\zeta+i \eta=i\mu_{32}$ and therefore
\begin{align*}
	i\mu_{32}&=\frac{1}{2} \left(\frac{x_3}{K_2}-\frac{x_1 x_2+y_1 y_2}{K_1}\right)+i\frac{1}{2} \left(\frac{x_2 y_1-x_1 y_2}{K_1}-\frac{y_3}{K_2}\right)
	\\&=\frac{1}{2}\left(\frac{-x_1x_2-y_1y_2+i(x_2y_1-x_1y_2)}{K_1}+\frac{x_3-iy_3}{K_2}
	\right).
\end{align*}
In order to determine $\mu_{33}$, we integrate the last equation in \eqref{systemmoment} and obtain
\begin{align*}
	\frac{i \left(\mu_{11}+\mu_{22}-2 \mu_{33}\right)}{2 \sqrt{3}}=\frac{\sqrt{3} }{4} \left(\frac{x_2^2+y_2^2}{K_1}-\frac{1}{K_2}\right).
\end{align*}
Using the fact that the matrix $M$ is traceless, i.e. $\mu_{11}+\mu_{22}+\mu_{33}=0$, the left-hand side reduces to
\begin{align*}
	\frac{-3i\mu_{33}}{2\sqrt{3}}=\frac{\sqrt{3} }{4} \left(\frac{x_2^2+y_2^2}{K_1}-\frac{1}{K_2}\right)
\end{align*}
and therefore
\begin{align*}
	\mu_{33}=\frac{i}{2}\left(\frac{x_2^2+y_2^2}{K_1}-\frac{1}{K_2}\right).
\end{align*}
On the other hand, using $2(\mu_{11}+\mu_{22})=-2\mu_{33}$, we have
\begin{align*}
\frac{i}{2\sqrt{3}}	\left(\mu_{11}+\mu_{22}-2 \mu_{33}\right)=\frac{3i}{2\sqrt{3}}	\left(\mu_{11}+\mu_{22}\right)=\frac{\sqrt{3} }{4} \left(\frac{x_2^2+y_2^2}{K_1}-\frac{1}{K_2}\right).
\end{align*}
Moreover, integrating the remaining Cartan equation $d\left(\frac{i \mu_{11}}{2}-\frac{i \mu_{22}}{2}\right)=\iota_{X_{\lambda_3}}\omega$
gives \[ \frac{i}{2}(\mu_{11}-\mu_{22})=\kappa. \]
This yields the system of equations
\begin{align*}
\begin{cases}
\frac{3i}{2\sqrt{3}}	\left(\mu_{11}+\mu_{22}\right)&=\lambda, 
\\
	\frac{i}{2}\left({\mu_{11}}-{\mu_{22}}\right)&=\kappa 
\end{cases}
\quad \Longleftrightarrow\quad
\begin{cases}
\frac{3i}{2\sqrt{3}}	\left(\mu_{11}+\mu_{22}\right)&=\lambda, 
\\
	\frac{3i}{2\sqrt{3}}\left({\mu_{11}}-{\mu_{22}}\right)&= \frac{3}{\sqrt{3}}\kappa.
\end{cases}
\end{align*}	
Adding (resp. subtracting) the equations gives 
\begin{align*}
	\mu_{11}&=-\frac{\sqrt{3}}{3}i\left(\alpha+\frac{3}{\sqrt{3}}\beta\right)=-\frac{i}{\sqrt{3}}\left(\frac{x_2^2+y_2^2-1}{2 \sqrt{3} K_1}-\frac{x_3^2+y_3^2+2}{2 \sqrt{3} K_2}\right) ,
	\\
\mu_{22}&=-\frac{\sqrt{3}}{3}i\left(\alpha-\frac{3}{\sqrt{3}}\beta\right)=-\frac{i}{\sqrt{3}}\left(\frac{2 x_2^2+2 y_2^2+1}{2 \sqrt{3} K_1}+\frac{x_3^2+y_3^2-1}{2 \sqrt{3} K_2}\right),
\\ 
\mu_{33}&=-(\mu_{11}+\mu_{22}).
\end{align*}
Now as we are working over $\mathfrak{su}^*(3)$ we have obtained all the entries of the matrix.
\end{proof}


%


\section{Green's function and the vortex Hamiltonian for $\mathbb{CP}^n$}
\label{sectionGreenAndHam}
In Section \ref{sectionMomentumMap}, we recalled the point vortex momentum map on $\mathbb{CP}^2$ and computed the one on $\mathbb{F}_{1,2}(\C^3)$, i.e., on the degenerate and generic orbits of the action of $\mathsf{SU}(3)$.
The aim of this section is to determine the Hamiltonian for the point vortex problem on $\mathbb{CP}^2$. An important ingredient here is the fundamental solution of the Laplace-Beltrami operator.


\subsection{The Laplace-Beltrami operator}
Let $(M,g)$ be a Riemannian manifold of dimension $n$ with local coordinates $x=(x_1,\dots,x_n)$ in which we express the metric by the symmetric $(n\times n)$-matrix $(g_{ij}):= (g_{ij})_{1 \leq i, j \leq n}$. 
The inverse of this matrix is denoted by $(g_{ij})^{-1} =:(g^{ij})$. 
The Riemannian volume form on $(M, g)$ is given in local coordinates by 
\[ \sqrt{|\det(g)|}dx_1\wedge \cdots \wedge dx_n =: d\mu. \]
Let $f: M \to \R$ a smooth function. Then the \textit{Laplace-Beltrami operator} $\Delta$ on $M$ is given in local coordinates $(x_1,\dots,x_n)$ by
\begin{align*}
	\Delta f= -\frac{1}{\sqrt{|\det(g_{ij})|}}\sum_{i,j =1}^n\partial_{x_j}g^{ij}\sqrt{|\det(g_{ij})|} \ \partial_{x_i}f.
\end{align*}
which, on $\R^n$ equipped with the Euclidean metric, yields $\Delta=\sum_{j=1}^n\partial_{x_j}^2$.
The {\em big diagonal} $\Diag_N(M)$ of the $N$-fold product $ M \times \dots \times M$ is defined as 
$$\Diag_N(M):= \{(y_1, \dots, y_N) \in M^N \mid  y_j = y_k \mbox{ for some } j \neq k \mbox{ with } 1 \leq j,k \leq N \},$$
i.e., it consists of all $N$-tuple point on $M$ for which at least two points coincide.
If $(M, g)$ is compact, then, for the Laplace-Beltrami operator, there exists a function $G: (M \times M) \setminus {\Diag_2(M)} \to \R$, referred to as \textit{Green's function},
satisfying the following properties (see for instance Section 2.3 in Aubin \cite{aubin1998some}):
\begin{enumerate}
	\item For all functions $\phi\in C^2(M, \R)$, we have
	\begin{align}
	\label{Gr}
	\phi(p) = \frac{1}{\vol(M)}\int_M \phi(q) \ d\mu(q) +\int_MG(p,q)\Delta\phi(q) \ d\mu(q).
	\end{align}
	\item $G$ is smooth on $(M\times M) \setminus \Diag_2(M)$.
	\item $G$ is symmetric, i.e. $G(p,q)=G(q,p)$ for all $p, q \in M$.
	\item We have $\int_M G(p,q) \ d\mu(q) = \text{constant}$ for all $p \in M$.
\end{enumerate}
Green's function is also called a {\em fundamental solution} of the Laplace-Beltrami operator.

\purge{
\begin{notation}
	The Dirac $\delta$-function (which is in fact a distribution) is given by 
	\begin{align*}
		\delta(x) = \begin{cases}
			0		&\quad x\neq 0\\
			\infty	&\quad x = 0
		\end{cases}
	\end{align*}
	with the following properties:
	\begin{align*}
		\int_{-\infty}^{\infty} \delta(x)dx = 1 \quad\text{and}\quad \int_{-\infty}^{\infty}f(x)\delta(x-a)dx =f(a),
	\end{align*}
	for a continuous function $f$ in $x=a$.
\end{notation}

\begin{definition}
Let $(M,g)$ be a compact Riemannian manifold with diagonal $D_M=\{(x,x)\in M\times M \mid x\in M\}$. The \textit{Green's function} $G(p,q)$ is the function $$G:(M\times M)\setminus D_M\to \mathbb{R}$$ satisfying the condition:
\begin{enumerate}[(i)]
	\item $\Delta_qG(p,q) = \delta_p(q) = \delta(p-q)$ if $M$ has boundary,
	\item $\Delta_qG(p,q) = \delta_p(q) - \operatorname{vol}(M)^{-1}$
\end{enumerate}
where $\operatorname{vol}(M)$ is the volume of the manifold $M$ and the Laplace operator acts in a distributional sense.
\end{definition}

}
 
 \begin{examples}
\mbox{ \ }
	\begin{enumerate}
		\item 
		Green's function on $\R^2$ equipped with the Euclidean metric is given by 
		$$G(x,y)=-\frac{1}{2\pi}\ln|x-y|$$
		{where $|x-y|:= \sqrt{(x_1-y_1)^2 + (x_2 -y_2)^2}$ is the Euclidean distance. Note that} $|x-y|$ coincides with the geodesic distance $r(x,y)$ of the Euclidean metric.
		
		\item 
		Consider the unit sphere $\mathbb{S}^2$ with $x=(\sin\theta\cos\phi, \ \sin\theta\sin \phi, \ \cos\theta)\in \mathbb{S}^2$ and $y=(\sin\theta'\cos\phi', \ \sin\theta'\sin \phi', \ \cos\theta')\in \mathbb{S}^2$. Then Green's function $G(x,y)$ for the spherical Laplace operator is given by (see Dritschel \cite{dritschel1988contour}) \[ G(x,y)=\frac{1}{2\pi}\log(1-\cos\Theta), \]
		where $\Theta=\cos \theta \cos \theta'+ \sin \theta \sin \theta' \cos(\phi-\phi')$.
	\end{enumerate}
\end{examples}

\noindent
On the flag manifold $\mathbb{F}_{1,2}(\mathbb{C}^3)$, we have 

\begin{prop}
The Laplace operator on the flag manifold is given by 
\[ \Delta_{\mathbb{F}_{1,2}(\mathbb{C}^3)} = \Delta_{\mathbb{CP}^2} + \Delta_R \]
where 
\begin{align*}
	\Delta_{\mathbb{CP}^2} = \sum_{j,k=1}^2(1+\delta_{jk}{z_k}{\overline{z}_j})\partial_{z_j}\partial_{{\overline{z}}_k}
\end{align*}
is the Laplace operator on $\mathbb{CP}^2$ and $ \Delta_R$ a correction term given by
{\small
\begin{align*}
\Delta_R & = \frac{K_1^2}{K_2}\left( \partial_{z_1}\partial_{\overline{z}_1}+z_3\partial_{z_1}\partial_{\overline{z}_2}+\overline{z}_3\partial_{z_2}\partial_{\overline{z}_1}+	|z_3|^2 \partial_{z_2}\partial_{\overline{z}_2}\right) +\left(K_1(1+|z_3|^2)+\frac{K_2^2}{K_1}\right)\partial_{z_3}\partial_{\overline{z}_3} \\ 
& \quad + (\overline{z}_1+z_3\overline{z}_2)(\overline{z}_1z_2-z_3-z_3|z_1|^2)\partial_{z_1}\partial_{\overline{z}_3} 
+ (\overline{z}_1+\overline{z}_2z_3)\left((1+|z_2|^2)-z_1\overline{z}_2z_3\right)\partial_{z_2}\partial_{\overline{z}_3}
\\
& \quad +(z_1+\overline{z}_3z_2)(z_1\overline{z}_2-\overline{z}_3-z_3|z_1|^2)\partial_{z_3}\partial_{\overline{z}_1}
 +({z}_1+{z}_2\overline{z}_3)\left((1+|z_2|^2)-{\overline{z}}_1{z}_2\overline{z}_3\right)\partial_{z_3}\partial_{\overline{z}_2}.
\end{align*}
}
where $K_1$ and $K_2$ are the functions from Lemma \ref{lem:potentialOnOrbits}.
\end{prop}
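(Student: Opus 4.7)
The proof reduces to a direct computation once one exploits the Kähler structure. On any Kähler manifold with Hermitian metric $h=(h_{ij})$ and inverse $(h^{ij})$, the Kähler condition forces the Laplace--Beltrami operator on smooth functions to collapse (the $\sqrt{|\det g|}$ contributions in the general Riemannian formula cancel via the Kähler identities) to the clean second-order expression
\[
\Delta f \;=\; c\sum_{i,j} h^{ij}\,\partial_{z_i}\partial_{\overline{z}_j} f,
\]
with $c$ a constant fixed by sign and normalisation conventions. The plan is to plug in the explicit inverse Hermitian matrix $(h^{ij})_{1\le i,j\le 3}$ on $\mathbb{F}_{1,2}(\C^3)$ furnished by the corollary preceding the statement, and regroup the resulting nine terms.

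The natural split is dictated by Lemma \ref{lem:potentialOnOrbits}: the Kähler potential decomposes as $\log K_1+\log K_2$, and $\log K_1$ is precisely the Fubini--Study potential on $\mathbb{CP}^2$ in the coordinates $(z_1,z_2)$. Inspecting the corollary's matrix, each entry $h^{ij}$ with $i,j\in\{1,2\}$ splits naturally as a ``Fubini--Study piece'' (of the form $K_1(\delta_{ij}+\overline z_i z_j)$) plus a remainder proportional to $K_1/K_2$ involving the variable $z_3$; the six entries touching the third row or column are pure remainder; and $h^{33}$ contributes an extra $K_2^2/K_1$ term. Collecting the Fubini--Study pieces reconstructs $\Delta_{\mathbb{CP}^2}$ as stated (after absorbing $c$ into the normalisation), while all remaining contributions --- three diagonal corrections from $h^{11},h^{22},h^{33}$, the six mixed off-diagonal terms coupling $z_1,z_2$ to $z_3$, and the coefficient of $\partial_{z_3}\partial_{\overline z_3}$ --- assemble termwise into the displayed expression for $\Delta_R$.

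The main obstacle is not conceptual but organisational: one has to fix the constant $c$ consistently with the normalisation implicit in the stated formula for $\Delta_{\mathbb{CP}^2}$, and then verify entry by entry that the nine pieces of $(h^{ij})$ map bijectively onto the summands listed in $\Delta_R$, with no double counting or missing terms. Once this matching is completed, the identity $\Delta_{\mathbb{F}_{1,2}(\C^3)}=\Delta_{\mathbb{CP}^2}+\Delta_R$ follows immediately.
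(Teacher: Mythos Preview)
Your approach is correct and matches the paper's own proof: both invoke the standard K\"ahler identity $\Delta = c\sum_{i,j} h^{ij}\partial_{z_i}\partial_{\overline{z}_j}$, substitute the explicit inverse Hermitian metric $(h^{ij})$ from the preceding corollary, and read off the terms. You supply more organisational detail than the paper (which simply writes ``direct calculations''), in particular your observation that the splitting $K_{\mathbb{F}_{1,2}(\C^3)}=\log K_1+\log K_2$ explains why the $2\times 2$ block of $(h^{ij})$ decomposes as the Fubini--Study inverse plus a $K_1/K_2$-weighted correction; this is a helpful gloss but not a different method.
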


\begin{proof}
On an $n$-dimensional K\"ahler manifold $(M,h)$ with K\"ahler potential $K_M$, the Laplace operator $\Delta$ is given by 
	\begin{align*}
	\Delta  = 2 \sum_{i, j=1}^n h^{ij}\partial_{i} \overline{\partial}_j K_{{M}}. 
	\end{align*}
Using the expression for the K\"ahler potential on the flag manifold from Lemma \ref{lem:potentialOnOrbits}, one obtains the expression in the proposition by direct calculations.
\end{proof}


\subsection{The Hamiltonian for point vortex dynamics}
Let $(M,\omega)$ be a symplectic manifold {and $N \in \mathbb{N}$. For $1 \leq k \leq N$, let $\tau_k: \Pi_{k=1}^N  M_k \to M$ be} the projection on the $k$th factor and let $\Gamma_1, \dots, \Gamma_N\in\R^{\neq 0}$.
Consider the space $\mathcal{M}:= \Pi_{k=1}^N  M \setminus \Diag_N(M)$ and endow it with the symplectic form $\Omega:=\Omega(\Gamma):= \sum_{k=1}^N \Gamma_k  \tau_k^* \omega$. 
Recall Green's function $G: (M \times M) \setminus \Diag_N(M) \to \R$ defined by the expression \eqref{Gr}
and define the so-called {\em Robin function} (see also Dritschel $\&$ Boatto \cite{dritschel2015motion})
\begin{align*}
R : M \to \R, \quad R(t) :=\lim_{\tilde{t} \to t}\left(G(\tilde{t} , t )-\frac{1}{2\pi}\log d( \tilde{t} , t ) \right).
\end{align*}
The Hamiltonian $H: \mathcal{M} \to \R$, 
\begin{align}
\label{ham}
	H(s_1, \dots, s_N): =\sum_{1\leq i < j\leq N} \Gamma_i\Gamma_jG(s_i,s_j)+\sum_{k=1}^N\Gamma_k^2R_g(s_k)
\end{align}
describes the dynamics of $N$ vortices with vortex strength $\Gamma_k\in\R^{\neq 0}$ for $k=1,\dots,N$ on the phase space $\mathcal M$. Green's function $G$ describes the interaction between pairs of distinct vortices and the Robin function takes self-interactions into account (for more details, see {Boatto \& Koiller \cite{Boatto2015}}).

To study the vortex dynamics on an explicitly given symplectic manifold, we need explicit formulas for Green's function. Unfortunately, Green's functions are explicitly known only for certain classes of manifolds as for instance planes, hyperbolic planes, and $2$-spheres, see Galajinsky \cite{galajinsky2022generalised}, Lim $\&$ Montaldi $\&$ Roberts \cite{lim2001relative}, Montaldi $\&$ Nava-Gaxiola \cite{montaldi2014point}.

Motivated by the study of the generic and degenerate coadjoint orbits of $\mathsf{SU}(3)$, we naturally are interested in the Green's functions on these coadjoint orbits. The degenerate coadjoint orbit is $\mathbb{CP}^2$ and, in the case of $\mathsf{SU}(n+1)$, one of the degenerate coadjoint orbits is $\mathbb{CP}^n$.

\subsection{Green's function {and the Hamiltonian on the coadjoint orbit $\mathbb{CP}^n$}}
The aim of this subsection is to obtain an explicit formula for Green's function on $\mathbb{CP}^n$. This will allow us then to write down the Hamiltonian for the point vortex dynamics on $\mathbb{CP}^n$ explicitly.

{Given an arbitrary compact Riemannian manifold, the} explicit computation of Green's function is not obvious. However, there are certain homogeneous spaces for which methods are available to obtain an explicit formula for Green's function. Among these spaces, there are compact rank one symmetric spaces, briefly CROSS spaces. All CROSS spaces are given by the following list: the $n$-sphere $\mathbb{S}^n$, the projective spaces $\mathbb{KP}^n$ for $\mathbb{K}\in\{\mathbb{R},\mathbb{C},\mathbb{H}\}$, and the octonionic plane $\mathbb{OP}^2$.

CROSS spaces are special examples of so-called {\em locally harmonic Blaschke manifolds} (see Besse \cite{besse1978manifolds}) for which the following result was established.


\begin{prop}[{Beltr\'an \& Corral \& Criado del Rey \cite{beltran2019discrete}}] 
Let $M$ be a locally harmonic Blaschke manifold and denote the geodesic distance between two points $x,y\in M$ by $r(x,y)=:r$. Then, Green's function on $M$ is given by $G(x,y)=\varphi(r)$ where $\varphi$ is determined by the differential equation 
\begin{align}
\label{diffequation}
	\varphi'(r) = - \frac{1}{r^{n-1} V_M(r)\vol(M)} \int_r^{\operatorname{inj}(M)}t^{n-1} V_M(t)dt
\end{align}
where $V_M$ is the so-called volume density and $\operatorname{inj}(M)$ is the injectivity radius of the manifold.
\end{prop}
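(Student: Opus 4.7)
The plan is to reduce the defining property of Green's function to a one-dimensional linear ODE in the geodesic distance $r$, exploiting the locally harmonic Blaschke structure. I fix a base point $x\in M$ and work in Riemannian normal coordinates $(r,\sigma)\in(0,\operatorname{inj}(M))\times S^{n-1}$ centred at $x$. In these coordinates the volume element takes the form $r^{n-1}V_M(r,\sigma)\,dr\,d\sigma$, and the locally harmonic hypothesis is precisely the statement that $V_M(r,\sigma)=V_M(r)$ is independent of the angular variable. Since in normal coordinates the radial direction is a unit geodesic ($g^{rr}=1$) orthogonal to the angular directions, substituting into the formula for $\Delta$ recalled in Section~\ref{sectionGreenAndHam} shows that the Laplace-Beltrami operator sends a radial function $f(r)$ to the radial function
\[
\Delta f(r)=-\frac{1}{r^{n-1}V_M(r)}\frac{d}{dr}\Bigl(r^{n-1}V_M(r)\,f'(r)\Bigr).
\]

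Next I adopt the ansatz $G(x,y)=\varphi(r(x,y))$. This is legitimate because Green's function is unique up to an additive constant and, by the radial symmetry of $V_M$, the spherical average of any Green's function is itself a Green's function. Integrating by parts in the characterising identity~(\ref{Gr}) shows that, in the distributional sense, $\Delta_y G(x,\cdot)=\delta_x-1/\vol(M)$. For $r\in(0,\operatorname{inj}(M))$ this becomes
\[
-\frac{1}{r^{n-1}V_M(r)}\frac{d}{dr}\Bigl(r^{n-1}V_M(r)\,\varphi'(r)\Bigr)=-\frac{1}{\vol(M)},
\]
and multiplying by $-r^{n-1}V_M(r)$ and integrating from $r$ to $\operatorname{inj}(M)$ yields
\[
\lim_{t\to\operatorname{inj}(M)}\bigl(t^{n-1}V_M(t)\,\varphi'(t)\bigr)-r^{n-1}V_M(r)\,\varphi'(r)=\frac{1}{\vol(M)}\int_r^{\operatorname{inj}(M)}t^{n-1}V_M(t)\,dt.
\]

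It remains to discard the boundary term at the cut locus, and here the Blaschke hypothesis enters decisively: since all geodesics issued from $x$ refocus simultaneously at the cut locus at distance $\operatorname{inj}(M)$, the Jacobi density $t^{n-1}V_M(t)$ vanishes as $t\to\operatorname{inj}(M)$. Combined with boundedness of $\varphi'$ near the cut locus, which follows from smoothness of $G$ off the diagonal via elliptic regularity, the limit on the left-hand side is zero, and dividing through by $-r^{n-1}V_M(r)$ produces exactly the formula in the statement. The step I expect to be the most delicate is this last one: one must verify that $t^{n-1}V_M(t)$ decays at the correct rate at $\operatorname{inj}(M)$ and that $\varphi'$ does not blow up there, both of which follow from the Blaschke property but require careful analysis of how the normal-coordinate parametrisation degenerates along the cut locus.
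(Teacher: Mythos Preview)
The paper does not give its own proof of this proposition: it is quoted as a result of Beltr\'an, Corral, and Criado del Rey \cite{beltran2019discrete} and then simply applied to $\mathbb{CP}^n$. There is therefore no in-paper argument to compare your proposal against.

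That said, your derivation is the standard one and is essentially correct. The radial reduction of the Laplace--Beltrami operator via the locally harmonic hypothesis, the distributional equation $\Delta_y G(x,\cdot)=\delta_x-\vol(M)^{-1}$ obtained from \eqref{Gr}, and the integration over $[r,\operatorname{inj}(M)]$ are exactly how this formula is obtained in the cited source. Your identification of the boundary term at $r=\operatorname{inj}(M)$ as the delicate point is accurate. An alternative that sidesteps the analysis of $t^{n-1}V_M(t)$ near the cut locus is to integrate instead from $0$ to $r$: the $\delta_x$ contribution fixes $\lim_{t\to 0^+} t^{n-1}V_M(t)\varphi'(t)=-1/\vol(S^{n-1})$, and combining this with the identity $\vol(S^{n-1})\int_0^{\operatorname{inj}(M)} t^{n-1}V_M(t)\,dt=\vol(M)$ (valid because the cut locus has measure zero) yields the same formula without invoking the behaviour of $\varphi'$ at the cut locus.
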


\noindent 
Recall that the {\em diameter} of a Riemannian manifold $(M,g)$ is defined by 
\[ \diam(M):=\sup_{x,y\in M}r(x,y) \] 
where $r(x,y)$ is the geodesic distance between $x, y \in (M, g)$.
We now focus on the CROSS space $M=\mathbb{CP}^n$ where, in fact, the injectivity radius is equal to the diameter. The density function $V_{\mathbb{CP}^n}$ was explicitly determined by Kreyssig \cite{kreyssig2010introduction} as
\begin{align}
\label{volumeDensityCPn}
	V_{\mathbb{CP}^n}(r) = \frac{2^{2n-1}}{r^{n-1}}\sin^{2n-1}(r)\cos(r).
\end{align}

In order to solve the differential equation \eqref{diffequation} for $M= \mathbb{CP}^n$ we need the following technical result.

\begin{lemma}
\label{integraal}
	Let $n\in\mathbb{N}^{>0}$. Then
    \begin{align*}
	\int \frac{1-\sin^{2n}(x)}{\sin^{2n-1}(x)\cos(x)} \ dx = \log(\sin(x))-\sum_{j=1}^{n-1}\frac{1}{2j\sin^{2j}(x)}.
	\end{align*}
\end{lemma}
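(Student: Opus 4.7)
The cleanest approach is simply to verify the identity by differentiating the right-hand side and recognizing the derivative as the integrand via a geometric series. Since both sides are antiderivatives, equality up to an additive constant is all that is required, and this direction avoids having to integrate the rational-trigonometric expression by parts or via partial fractions.

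Concretely, I would set
\[
F(x) := \log(\sin(x)) - \sum_{j=1}^{n-1} \frac{1}{2j\sin^{2j}(x)}
\]
and compute $F'(x)$ term by term. The logarithmic term contributes $\cos(x)/\sin(x)$, and differentiating $\frac{1}{2j\sin^{2j}(x)}$ yields $-\cos(x)/\sin^{2j+1}(x)$, so the minus sign in front cancels and one obtains
\[
F'(x) = \frac{\cos(x)}{\sin(x)} + \sum_{j=1}^{n-1} \frac{\cos(x)}{\sin^{2j+1}(x)} = \frac{\cos(x)}{\sin(x)} \sum_{j=0}^{n-1} \frac{1}{\sin^{2j}(x)}.
\]

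The key step is then to sum the finite geometric series with ratio $\sin^{-2}(x)$:
\[
\sum_{j=0}^{n-1} \sin^{-2j}(x) = \frac{\sin^{-2n}(x) - 1}{\sin^{-2}(x) - 1} = \frac{1 - \sin^{2n}(x)}{\sin^{2n-2}(x)(1-\sin^2(x))} = \frac{1 - \sin^{2n}(x)}{\sin^{2n-2}(x)\cos^2(x)}.
\]
Substituting this back gives
\[
F'(x) = \frac{\cos(x)}{\sin(x)} \cdot \frac{1 - \sin^{2n}(x)}{\sin^{2n-2}(x)\cos^2(x)} = \frac{1 - \sin^{2n}(x)}{\sin^{2n-1}(x)\cos(x)},
\]
which is exactly the integrand, so the identity holds up to an additive constant (absorbed into the indefinite integral).

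There is no real obstacle here; the only bookkeeping point is to be careful with the signs when differentiating $\sin^{-2j}(x)$ and to recognize that $1-\sin^2(x) = \cos^2(x)$ is what produces the single $\cos(x)$ in the denominator of the integrand. The result could equivalently be derived by the substitution $u = \sin^2(x)$, $du = 2\sin(x)\cos(x)\,dx$, which turns the integral into $\tfrac{1}{2}\int \frac{1-u^n}{u^n(1-u)}\,du$ and then using $\frac{1-u^n}{1-u} = 1 + u + \cdots + u^{n-1}$, but the verification-by-differentiation route above is the most economical.
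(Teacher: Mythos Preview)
Your proof is correct and takes a genuinely different route from the paper. The paper proceeds by direct integration: it splits off the $\tan(x)$ piece, rewrites $\sec(x)\csc^{2n-1}(x)$ as $\sec(x)\csc(x)(1+\cot^2(x))^{n-1}$, expands with the binomial theorem, integrates each term via the substitution $v=\cot(x)$, and then simplifies the resulting sum $\sum_{k}\binom{n-1}{k}\frac{\cot^{2k}(x)}{2k}$ back into the form $\sum_{j}\frac{1}{2j\sin^{2j}(x)}$. Your argument instead differentiates the proposed antiderivative and collapses the result via the finite geometric series $\sum_{j=0}^{n-1}\sin^{-2j}(x) = \frac{1-\sin^{2n}(x)}{\sin^{2n-2}(x)\cos^2(x)}$, which is both shorter and avoids the binomial expansion and the subsequent recombination step entirely. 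The substitution $u=\sin^2(x)$ you mention at the end is essentially the same geometric-series observation in integral form and would give an equally clean forward derivation; either of your approaches is more economical than the paper's.
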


\begin{proof}
	We start with 
	\begin{align*}
	\int\frac{1-\sin^{2n}(x)}{\sin^{2n-1}(x)\cos(x) } \ dx &=\int \frac{1}{\sin^{2n-1}(x)\cos(x)} \ dx - \int\tan(x) \ dx 
	\\&= \int \frac{1}{\sin^{2n-1}(x)\cos(x)} \ dx  + \log(\cos(x)).
	\end{align*}
	Using $	1+\cot^2(x) = \csc^2(x)$, we obtain 
	\begin{align*}
	\int \frac{1}{\sin^{2n-1}(x)\cos(x)} \ dx & = \int \sec(x)\csc^{2n-1}(x) \ dx 
	= 	\int \sec(x)\csc^{2n-2}(x)\csc(x) \ dx 
	\\&=\int \sec(x)\csc(x)(1+\cot^2(x))^{n-1} \ dx.
	\end{align*}
Using the binomial formula 
\begin{align*} (1+\cot^2(x))^{n-1} = 
	\sum_{k=0}^{n-1}{n-1 \choose k}\cot^{2k}(x)
	\end{align*}
 we obtain 
 \begin{align*}
	\int \sec(x)\csc(x)\sum_{k=0}^{n-1}{n-1 \choose k}\cot^{2k}(x) \ dx = \sum_{k=0}^{n-1}{n-1 \choose k}\int \sec(x)\csc(x)\cot^{2k}(x)\ dx.
	\label{formulaSEC}
	\end{align*}
Since 
\begin{align*}
	\sec(x)\csc(x)\cot^{\ell}(x) = \frac{1}{\sin(x)\cos(x)}\frac{\cos^{\ell}(x)}{\sin^{\ell}(x)}=\frac{\cos^{\ell-1}(x)}{\sin^{\ell+1}(x)}=\frac{\cot^{\ell-1}(x)}{\sin^2(x)}
\end{align*}
the integral becomes
\begin{align*}
	\int \cot^{\ell-1}(x)\csc^2(x) \ dx.
\end{align*}
	By using the substitution $v=\cot(x)$ and $dv=-\csc^2(x)  dx $ we have 
	\[- \int v^{\ell-1} \ dv= -\frac{v^{\ell}}{\ell} + C =-\frac{\cot^{\ell}(x)}{\ell}+C \]
	where $C$ is the integration constant.
	Now, for $\ell=2k$, we can rewrite
	\[ \sum_{k=0}^{n-1}{n-1 \choose k}\int \sec(x)\csc(x)\cot^{2k}(x) \ dx= 
	\int \sec(x)\csc(x) \ dx 
	-\sum_{k=1}^{n-1}{n-1 \choose k}\frac{\cot^{2k}(x)}{2k}.\]
	We compute 
	\[ \int \sec(x)\csc(x)dx =\log(\sin(x)) - \log(\cos(x)).\]
	This means for our original integral
	\begin{align*}
	& \int\frac{1-\sin^{2n}(x)}{\sin^{2n-1}(x)\cos(x)}dx \\
	& \quad =  	\log(\sin(x))-\log(\cos(x)) -\sum_{k=1}^{n-1}{n-1 \choose k}\frac{\cot^{2k}(x)}{2k} +\log(\cos(x))	\\
	& \quad =\log(\sin(x)) -\sum_{k=1}^{n-1}{n-1 \choose k}\frac{\cot^{2k}(x)}{2k}+{C,}
	\end{align*}
	{where $C$ is an integration constant.}
	By using repeatedly
	\[ \cot^2(x)=\frac{1}{\sin^2(x)}{-1}, \] 
	{and by choosing $C$ suitably we arrive at}
	\[	\int \frac{1-\sin^{2n}(x)}{\sin^{2n-1}(x)\cos(x)}dx=\log(\sin(x))-\sum_{j=1}^{n-1}\frac{1}{2j\sin^{2j}(x)}.\]
\end{proof}

We now obtain

\begin{theorem}
\label{greenFunctionCPn}
	Green's function on $\mathbb{CP}^n$ with the Fubini-Study metric is given by $G:\mathbb{CP}^n\times \mathbb{CP}^n \setminus \Diag_2(\mathbb{CP}^n ) \to \mathbb{R}$ with
	\begin{align*}
	G(\xi,\eta)= 
	-\frac{1}{2n\cdot \vol(\mathbb{CP}^n)}\left(\log(\sin(r(\xi,\eta)))-\sum _{j=1}^{n-1} \frac{1}{2j \sin^{2j}(r(\xi,\eta))}\right)
	\end{align*}
	where 
	$r(\xi,\eta) = \arccos\sqrt{\frac{\langle \xi,\eta\rangle \langle \eta,\xi\rangle_H}{\langle \xi,\xi\rangle \langle \eta,\eta\rangle_H}}$ is the geodesic distance between the two point in $\mathbb{CP}^n$ and $\langle \cdot,\cdot \rangle_H$ is the Hermitian inner product.
\end{theorem}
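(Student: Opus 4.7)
My plan is to apply the proposition of Beltr\'an, Corral, and Criado del Rey to $M=\mathbb{CP}^n$, which is a CROSS space and hence a locally harmonic Blaschke manifold, so that Green's function takes the radial form $G(\xi,\eta)=\varphi(r(\xi,\eta))$. This reduces the problem to solving the first-order ODE \eqref{diffequation} and identifying the geodesic distance function for the Fubini-Study metric.

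First, I would recall that, for the normalization of the Fubini-Study metric underlying the density formula \eqref{volumeDensityCPn}, the injectivity radius of $\mathbb{CP}^n$ coincides with its diameter $\pi/2$. Substituting $V_{\mathbb{CP}^n}(t)=\frac{2^{2n-1}}{t^{n-1}}\sin^{2n-1}(t)\cos(t)$ into the integrand makes the factor $t^{n-1}$ cancel, and the substitution $u=\sin(t)$ gives
\begin{align*}
\int_r^{\pi/2} t^{n-1}V_{\mathbb{CP}^n}(t)\,dt
= 2^{2n-1}\int_r^{\pi/2}\sin^{2n-1}(t)\cos(t)\,dt
= \frac{2^{2n-1}}{2n}\bigl(1-\sin^{2n}(r)\bigr).
\end{align*}
Plugging this and $r^{n-1}V_{\mathbb{CP}^n}(r)=2^{2n-1}\sin^{2n-1}(r)\cos(r)$ into \eqref{diffequation} collapses the $2^{2n-1}$ factors, yielding the clean ODE
\begin{align*}
\varphi'(r) \;=\; -\frac{1}{2n\cdot\vol(\mathbb{CP}^n)}\cdot\frac{1-\sin^{2n}(r)}{\sin^{2n-1}(r)\cos(r)}.
\end{align*}

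Second, I would integrate this ODE using Lemma \ref{integraal}, which was tailored precisely to produce the antiderivative of the right-hand side. This immediately delivers
\begin{align*}
\varphi(r) \;=\; -\frac{1}{2n\cdot\vol(\mathbb{CP}^n)}\left(\log(\sin r)-\sum_{j=1}^{n-1}\frac{1}{2j\sin^{2j}(r)}\right) + C,
\end{align*}
for some integration constant $C$. The constant is harmless: Green's function for a compact manifold is determined only up to an additive constant, and the normalization condition $\int_M G(p,q)\,d\mu(q)=\text{const}$ built into property (iv) of Green's function allows us to absorb $C$ into that constant; equivalently, any additive shift does not affect the Hamiltonian of point vortex dynamics since it simply shifts $H$ by a constant depending only on the weights $\Gamma_k$.

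Finally, it remains to express $r$ in terms of $(\xi,\eta)$: for the Fubini-Study metric on $\mathbb{CP}^n$, the standard calculation using the Hopf fibration and the Hermitian inner product $\langle\cdot,\cdot\rangle_H$ shows that the geodesic distance between $[\xi]$ and $[\eta]$ is $r(\xi,\eta)=\arccos\sqrt{\frac{\langle\xi,\eta\rangle_H\langle\eta,\xi\rangle_H}{\langle\xi,\xi\rangle_H\langle\eta,\eta\rangle_H}}$, which I would quote from a standard reference on complex projective geometry. The main obstacle in this approach was really the antiderivative computation of Lemma \ref{integraal}; once that is settled, the rest of the argument is a direct substitution plus the careful bookkeeping of the Fubini-Study normalization so that the density formula \eqref{volumeDensityCPn}, the injectivity radius $\pi/2$, and the arccos-formula for $r$ are all consistent.
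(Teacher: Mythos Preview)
Your proposal is correct and follows essentially the same route as the paper: apply the Beltr\'an--Corral--Criado del Rey ODE to $\mathbb{CP}^n$ with $\operatorname{inj}(\mathbb{CP}^n)=\pi/2$ and the density \eqref{volumeDensityCPn}, reduce to $\varphi'(r)=-\frac{1}{2n\,\vol(\mathbb{CP}^n)}\cdot\frac{1-\sin^{2n}(r)}{\sin^{2n-1}(r)\cos(r)}$, and integrate via Lemma~\ref{integraal}. Your added remarks on the integration constant and the Fubini--Study distance formula are welcome details that the paper leaves implicit.
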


\begin{proof}
	Recall from Equation \eqref{volumeDensityCPn} that the volume density of $\mathbb{CP}^n$ is
	\begin{align*}{\label{ODE}}
	V_{\mathbb{CP}^n}(r) &= \frac{2^{2n-1}\sin^{2n-1}(r)\cos(r)}{r^{n-1}}
	\end{align*}
	and that 
	\[ \operatorname{inj} (\mathbb{CP}^n) =  \diam(\mathbb{CP}^n)=\frac{\pi}{2} \quad\text{ and }\quad \vol(\mathbb{CP}^n)=\frac{\pi^n}{n!}.\]
	Therefore the ODE for $\varphi(r)$ from \eqref{diffequation} can be written as 
	\[ \varphi'(r)=-\frac{1}{r^{n-1} V_{\mathbb{CP}^n} (r) \vol(\mathbb{CP}^n )}\int_r^{\diam(\mathbb{CP}^n)}t^{n-1} V_{\mathbb{CP}^n} (t) \ dt \]
	and gives rise to the following equation:
\begin{align*}
\varphi'(r)&=-\frac{1}{\vol(\mathbb{CP}^n)\sin^{2n-1}(r)\cos(r)}\int_r^{\frac{\pi}{2}}\sin^{2n-1}(t)\cos(t)dt \\&= -\frac{1}{\vol(\mathbb{CP}^n)\sin^{2n-1}(r)\cos(r)}\  \frac{1}{2n}(1-\sin^{2n}(r)) . 
\end{align*}
	Solving this ODE gives the following formula for the fundamental solution
	\begin{align*}
	\varphi(r)&=-\frac{1}{2n\cdot \vol(\mathbb{CP}^n)}\int \frac{1-\sin^{2n}(r)}{\sin^{2n-1}(r)\cos(r)} \ dr.
	\end{align*}
	Using Lemma \ref{integraal} now gives the result.
\end{proof}

\begin{theorem}
\label{fullhamiltonian_cpn}
The Hamiltonian for the $N$ point vortex dynamics on the projective space $\mathbb{CP}^n$ is explicitly given by
\begin{align*}
& H: \bigl(\mathbb{CP}^n\bigr)^N \setminus \Diag_N(\mathbb{CP}^n) \to \R, \\
& H (\zeta) = -\frac{1}{2(n-1)!\pi^n}\sum_{\alpha < \beta}^N \Gamma_{\alpha}\Gamma_{\beta}\left(\log(\sin(r(\zeta_{\alpha},\zeta_{\beta})))-\sum _{j=1}^{n-1} \frac{1}{2j \sin^{2j}({r(\zeta_{\alpha}, \zeta_{\beta})})}\right)
\end{align*}
where $\zeta = (\zeta_1, \dots, \zeta_N)$ and $r(\zeta_{\alpha},\zeta_{\beta})$ is the geodesic distance on $\mathbb{CP}^n$ between the two points given by 
\begin{align*}
r(\zeta_{\alpha},\zeta_{\beta}) = \arccos\sqrt{\frac{\langle \zeta_{\alpha},\zeta_{\beta}\rangle_H \langle \zeta_{\beta},\zeta_{\alpha}\rangle_H}{\langle \zeta_{\alpha},\zeta_{\alpha}\rangle_H \langle \zeta_{\beta},\zeta_{\beta}\rangle_H}},
\end{align*}
where $\langle \cdot,\cdot \rangle_H$ is the Hermitian inner product.
\end{theorem}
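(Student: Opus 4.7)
The plan is to derive the formula by direct substitution of Theorem \ref{greenFunctionCPn} into the general point vortex Hamiltonian \eqref{ham}, after first arguing that the Robin self-interaction term is constant on $\mathbb{CP}^n$ and may therefore be discarded.

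First, I would specialize formula \eqref{ham} to $M = \mathbb{CP}^n$, writing
\begin{align*}
H(\zeta_1, \dots, \zeta_N) = \sum_{1 \leq \alpha < \beta \leq N} \Gamma_\alpha \Gamma_\beta G(\zeta_\alpha, \zeta_\beta) + \sum_{k=1}^N \Gamma_k^2 R(\zeta_k).
\end{align*}
Since $\mathsf{SU}(n+1)$ acts transitively on $\mathbb{CP}^n$ by Fubini-Study isometries, and Green's function provided by Theorem \ref{greenFunctionCPn} depends only on the geodesic distance, $G$ is invariant under this action. Consequently the associated Robin function $R(z) = \lim_{s \to z}\bigl(G(s,z) - \tfrac{1}{2\pi}\log r(s,z)\bigr)$ is also $\mathsf{SU}(n+1)$-invariant, hence constant on $\mathbb{CP}^n$. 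The self-interaction term $\sum_k \Gamma_k^2 R(\zeta_k)$ is thus a global constant on $(\mathbb{CP}^n)^N \setminus \Diag_N(\mathbb{CP}^n)$; it contributes no Hamiltonian vector field and can be absorbed, as is customary on homogeneous manifolds (see the discussion in the introduction).

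Next, I would substitute the explicit Green's function from Theorem \ref{greenFunctionCPn},
\begin{align*}
G(\zeta_\alpha, \zeta_\beta) = -\frac{1}{2n \cdot \vol(\mathbb{CP}^n)}\left( \log(\sin(r(\zeta_\alpha, \zeta_\beta))) - \sum_{j=1}^{n-1} \frac{1}{2j \sin^{2j}(r(\zeta_\alpha, \zeta_\beta))} \right),
\end{align*}
into the pairwise sum, pull out the common prefactor $-\tfrac{1}{2n \cdot \vol(\mathbb{CP}^n)}$, and simplify using the Fubini-Study volume $\vol(\mathbb{CP}^n) = \pi^n/n!$ (as recalled in the proof of Theorem \ref{greenFunctionCPn}). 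This yields a purely combinatorial constant in front of the pairwise sum, which matches the prefactor in the stated formula for $H$. The expression for the geodesic distance $r(\zeta_\alpha, \zeta_\beta)$ in terms of the Hermitian inner product $\langle\cdot,\cdot\rangle_H$ is the standard Fubini-Study distance on $\mathbb{CP}^n$ lifted to representatives in $\mathbb{C}^{n+1} \setminus \{0\}$ and is already used in Theorem \ref{greenFunctionCPn}.

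The argument is essentially a substitution together with arithmetic simplification of the constants; all of the analytic content has been packaged into Theorem \ref{greenFunctionCPn}. There is no serious obstacle. The only mildly delicate step is the justification for discarding the Robin self-interaction, which is immediate from $\mathsf{SU}(n+1)$-homogeneity together with the $\mathsf{SU}(n+1)$-invariance of $G$.
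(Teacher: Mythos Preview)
Your proposal is correct and follows essentially the same route as the paper: substitute the explicit Green's function from Theorem \ref{greenFunctionCPn} into the general point vortex Hamiltonian \eqref{ham} and simplify the constant using $\vol(\mathbb{CP}^n)=\pi^n/n!$. Your justification for dropping the Robin term via $\mathsf{SU}(n+1)$-homogeneity is more explicit than the paper's proof (which simply invokes \eqref{ham} and Theorem \ref{greenFunctionCPn}), but this point is already anticipated in the introduction where the authors note that on homogeneous manifolds the Robin function is neglected for symmetry reasons.
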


\begin{proof}
Equation \eqref{ham} gives the formal expression for the Hamiltonian of the N point vortex problem on a manifold. Using the explicit expression for the Green function on $\mathbb{CP}^n$ from Theorem \ref{greenFunctionCPn} yields the result.
\end{proof}

The Hamiltonian vector field $\mathcal{X}_H$ (and then also the equations of motion) for the Hamiltonian from Theorem \ref{fullhamiltonian_cpn} can be computed either by using the implicit formula involving the symplectic form or by making use of an compatible almost complex structure $J$ for the Fubini-Study metric, i.e., via $X^{H} = J\operatorname{grad}(H)$.


\subsection{The Hamiltonian on the flag manifold $\mathbb{F}_{1,2}(\C^3)$}
\label{hamFlag}

As we saw in previous subsections, the explicit knowledge of Green's function is quite rare which makes it complicated to obtain an explicit expression for the Hamiltonian of the point vortex problem in many situations.

We do not yet have an explicit formula for Green's function and the Laplacian on the flag manifold $\mathbb{F}_{1,2}(\C^3)$ so that we also do not yet have an explicit expression for the Hamiltonian of the point vortex problem.
One idea to approach this open question may be the fibration 
$$
\mathbb{S}^2\longrightarrow W^6 \simeq \mathbb{F}_{1,2}(\C^3) \longrightarrow \C\mathbb{P}^2
$$
together with the hope to deduce Green's function on $W^6 \simeq \mathbb{F}_{1,2}(\C^3)$ from those on 
$\mathbb{S}^2$ and $\C\mathbb{P}^2$ and thus obtain the Hamiltonian on $W^6 \simeq \mathbb{F}_{1,2}(\C^3)$.

In fact, this poses the more general question of the behaviour of Green's function with respect to fibrations in general. But this is beyond the scope of the present paper.

  \section*{ Acknowledgments} 
\noindent The authors wish to thank Marine Fontaine for helpful discussions and useful comments.
 S.\ Hohloch was partially and G.\ Muarem was fully supported by the FWO-EoS project \textit{`Symplectic Techniques in Differential Geometry'} {G0H4518N}.  {The results in this paper were part of the PhD thesis of G.\ Muarem. The authors wish to stress that -- like it is common in mathematics -- the names are ordered alphabetically and not according to the effort.}
\bibliographystyle{alphaurl}
\bibliography{untitled}

\begin{thebibliography}{BMHM94}

\bibitem[ADG85]{angrand1985numerical}
F.~Angrand, A.~Dervieux, and R.~Glowinski, editors.
\newblock {\em Numerical methods for the {E}uler equations of fluid dynamics,
  in: Proceedings of the {INRIA} workshop held in {R}ocquencourt, {D}ecember
  7--9, 1983}.
\newblock Society for Industrial and Applied Mathematics (SIAM), Philadelphia,
  PA, 1985.

\bibitem[AHS78]{atiyah1978self}
M.~Atiyah, N.~Hitchin, and I.~Singer.
\newblock Self-duality in four-dimensional {R}iemannian geometry.
\newblock {\em Proceedings of the Royal Society of London. A. Mathematical and
  Physical Sciences}, 362(1711):425--461, 1978.

\bibitem[AM18]{arathoon2018hermitian}
P.~Arathoon and J.~Montaldi.
\newblock Hermitian flag manifolds and orbits of the {E}uclidean group.
\newblock {\em arXiv:1804.09463}, 2018.

\bibitem[Are07]{aref2007point}
H.~Aref.
\newblock Point vortex dynamics: a classical mathematics playground.
\newblock {\em Journal of Mathematical Physics}, 48(6):065401, 2007.

\bibitem[Aub98]{aubin1998some}
Thierry Aubin.
\newblock {\em Some nonlinear problems in Riemannian geometry}.
\newblock Springer Monographs in Mathematics. Springer-Verlag, Berlin, 1998.

\bibitem[BCdR19]{beltran2019discrete}
C.~Beltr{\'a}n, B.~Corral, and Juan~G. del Rey.
\newblock Discrete and continuous green energy on compact manifolds.
\newblock {\em Journal of Approximation Theory}, 237:160--185, 2019.

\bibitem[Bes78]{besse1978manifolds}
A.~Besse.
\newblock {\em Manifolds all of whose geodesics are closed}, volume~93.
\newblock Springer Science \& Business Media, 1978.

\bibitem[BG07]{bedulli2007deformations}
L.~Bedulli and A.~Gori.
\newblock On deformations of {H}amiltonian actions.
\newblock {\em Archiv der Mathematik}, 88(5):468--480, 2007.

\bibitem[BH08]{bernatska2008geometry}
J.~Bernatska and P.~Holod.
\newblock Geometry and topology of coadjoint orbits of semisimple lie groups.
\newblock In {\em Proceedings of the Ninth International Conference on
  Geometry, Integrability and Quantization}, pages 146--166, 2008.

\bibitem[BK15]{Boatto2015}
Stefanella Boatto and Jair Koiller.
\newblock Vortices on closed surfaces.
\newblock In {\em Geometry, Mechanics, and Dynamics}, pages 185--237. Springer
  New York, 2015.

\bibitem[BMHM94]{boyer1994topology}
C.~Boyer, B.~Mann, J.~Hurtubise, and R.~Milgram.
\newblock The topology of the space of rational maps into generalized flag
  manifolds.
\newblock {\em Acta Mathematica}, 173(1):61--101, 1994.

\bibitem[Bor54]{Borel1954}
Armand Borel.
\newblock K\"{a}hlerian coset spaces of semisimple lie groups.
\newblock {\em Proceedings of the National Academy of Sciences},
  40(12):1147--1151, December 1954.

\bibitem[Cro06]{crowdy2006point}
D.~Crowdy.
\newblock Point vortex motion on the surface of a sphere with impenetrable
  boundaries.
\newblock {\em Physics of Fluids}, 18(3):036602, 2006.

\bibitem[DB15]{dritschel2015motion}
D.~Dritschel and S.~Boatto.
\newblock The motion of point vortices on closed surfaces.
\newblock {\em Proceedings of the Royal Society A: Mathematical, Physical and
  Engineering Sciences}, 471(2176):20140890, 2015.

\bibitem[Dea14]{dearricott2014geometry}
O.~Dearricott and Galaz-Garc{\'\i}a et. al.
\newblock {\em Geometry of manifolds with non-negative sectional curvature}.
\newblock Springer, 2014.

\bibitem[Dri88]{dritschel1988contour}
D.~Dritschel.
\newblock Contour dynamics/surgery on the sphere.
\newblock {\em Journal of Computational Physics}, 79(2):477--483, 1988.

\bibitem[Gal22]{galajinsky2022generalised}
A.~Galajinsky.
\newblock Generalised point vortices on a plane.
\newblock {\em Physics Letters B}, 829:137119, 2022.

\bibitem[GS84]{MR0770935}
Victor Guillemin and Shlomo Sternberg.
\newblock {\em Symplectic techniques in physics}.
\newblock Cambridge University Press, Cambridge, 1984.

\bibitem[Hel67]{helmholtz1867lxiii}
H.~Helmholtz.
\newblock On integrals of the hydrodynamical equations, which express
  vortex-motion.
\newblock {\em The London, Edinburgh, and Dublin Philosophical Magazine and
  Journal of Science}, 33(226):485--512, 1867.

\bibitem[HH97]{hou1997differential}
Bo-Yu Hou and Bo-Yuan Hou.
\newblock {\em Differential geometry for physicists}, volume~6.
\newblock World Scientific Publishing Company, 1997.

\bibitem[Hit81]{hitchin1981kahlerian}
N.~Hitchin.
\newblock K{\"a}hlerian twistor spaces.
\newblock {\em Proceedings of the London Mathematical Society}, 3(1):133--150,
  1981.

\bibitem[Kir04]{Kirillov2004}
A.~Kirillov.
\newblock {\em Lectures on the Orbit Method}.
\newblock American Mathematical Society, July 2004.

\bibitem[Kre10]{kreyssig2010introduction}
P.~Kreyssig.
\newblock An introduction to harmonic manifolds and the {L}ichnerowicz
  conjecture.
\newblock {\em arXiv:1007.0477}, 2010.

\bibitem[LMR01]{lim2001relative}
C.~Lim, J.~Montaldi, and M.~Roberts.
\newblock Relative equilibria of point vortices on the sphere.
\newblock {\em Physica D: Nonlinear Phenomena}, 148(1-2):97--135, 2001.

\bibitem[LPMR11]{laurent2011point}
F.~Laurent-Polz, J.~Montaldi, and M.~Roberts.
\newblock Point vortices on the sphere: stability of symmetric relative
  equilibria.
\newblock {\em Journal of Geometric Mechanics}, 3(4):439--486, 2011.

\bibitem[MGPR20]{munoz2020geometry}
V.~Mu{\~n}oz, A.~Gonz{\'a}lez-Prieto, and J.~Rojo.
\newblock {\em Geometry and Topology of Manifolds: Surfaces and Beyond}, volume
  208.
\newblock American Mathematical Society, 2020.

\bibitem[MNG14]{montaldi2014point}
J.~Montaldi and C.~Nava-Gaxiola.
\newblock Point vortices on the hyperbolic plane.
\newblock {\em Journal of Mathematical Physics}, 55(10):102702, 2014.

\bibitem[MS19a]{montaldi2018generalized}
J.~Montaldi and A.~Shaddad.
\newblock Generalized point vortex dynamics on $\mathbb{CP}^2$.
\newblock {\em Journal of Geometric Mechanics}, 11(4):601--619, 2019.

\bibitem[MS19b]{Montaldi2019}
J.~Montaldi and A.~Shaddad.
\newblock Non-abelian momentum polytopes for products of $\mathbb{CP}^2$.
\newblock {\em Journal of Geometric Mechanics}, 11(4):575--599, 2019.

\bibitem[MST03]{montaldi2003vortex}
J.~Montaldi, A.~Souliere, and T.~Tokieda.
\newblock Vortex dynamics on a cylinder.
\newblock {\em SIAM Journal on Applied Dynamical Systems}, 2(3):417--430, 2003.

\bibitem[New01]{Newton2001}
Paul~K. Newton.
\newblock {\em The N-Vortex Problem}.
\newblock Springer New York, 2001.

\bibitem[Pic90]{picken1990duistermaat}
R.~Picken.
\newblock The {D}uistermaat--{H}eckman integration formula on flag manifolds.
\newblock {\em Journal of Mathematical Physics}, 31(3):616--638, 1990.

\bibitem[Wal72]{wallach1972compact}
N.~Wallach.
\newblock Compact homogeneous {R}iemannian manifolds with strictly positive
  curvature.
\newblock {\em Annals of Mathematics (2)}, 96:277--295, 1972.

\end{thebibliography}
\end{document}